\documentclass[a4paper,11pt]{amsart}
\usepackage{amsfonts}
 \usepackage{enumerate}
 \usepackage{amsmath, amsthm, amscd, amssymb}
\usepackage{pdfsync}
\usepackage[title]{appendix}
\usepackage[all]{xy}
\usepackage{latexsym,graphicx}
\usepackage{xspace}
\usepackage{mathabx}
\usepackage{array}
\usepackage{newclude}
\usepackage{hyperref}
\usepackage{url}
\usepackage{enumitem}
\hypersetup{
  colorlinks,
  citecolor=black,
  linkcolor=black,
  urlcolor=black}
  
  \usepackage[utf8]{inputenc}
  \usepackage[T1]{fontenc}
 
\usepackage{mathrsfs}

\renewcommand*{\HyperDestNameFilter}[1]{\jobname-#1} 
\numberwithin{equation}{section}
\usepackage[centering, marginparwidth=2cm]{geometry}
\usepackage{marginnote}

\addtocontents{toc}{\setcounter{tocdepth}{1}}

 \newcommand{\Addresses}{{% additional braces for segregating \footnotesize
  \bigskip
  \footnotesize

\textsc{CNRS, IMJ-PRG, Sorbonne Universit\'{e}, 4 place Jussieu, 75005 Paris, (France)}\par\nopagebreak
  \textit{E-mail address}, G.~Baldi: \texttt{baldi@imj-prg.fr} 

  \medskip

\textsc{Hausdorﬀ Center for Mathematics, University of Bonn,
Endenicher Allee 60,
and
Max Planck Institute for Mathematics,
Vivatsgasse 7, Bonn (Germany)}\par\nopagebreak
  \textit{E-mail address}, J.~Lam: \texttt{lam@mpim-bonn.mpg.de}
}}

\usepackage{cleveref}
\usepackage{thmtools}

\usepackage{tikz-cd}
\usetikzlibrary{cd}
\usepackage{leftidx}
\usetikzlibrary{positioning}
\usepackage{dsfont}
\usepackage{tikz}
\usetikzlibrary{matrix,arrows,decorations.pathmorphing,positioning}

\newcommand\blfootnote[1]{%
\begingroup
\renewcommand\thefootnote{}\footnote{#1}%
\addtocounter{footnote}{-1}%
\endgroup
}

\theoremstyle{plain}
\newtheorem{theor}{Theorem}[section]
\newtheorem{claim}{Claim}
\newtheorem{conj}[theor]{Conjecture}
\newtheorem{lem}[theor]{Lemma}
\newtheorem{defi}[theor]{Definition}

\newtheorem{prop}[theor]{Proposition}
\newtheorem{cor}[theor]{Corollary}
\theoremstyle{definition}

\newtheorem{rmk}[theor]{Remark}
\newtheorem{question}[theor]{Question}
\theoremstyle{remark}
\numberwithin{equation}{subsection}

\crefname{theor}{Theorem}{Theorems}\Crefname{theor}{Theorem}{Theorems}
\crefname{conj}{Conjecture}{Conjectures}\Crefname{conj}{Conjecture}{Conjectures}
\crefname{lem}{Lemma}{Lemmas}\Crefname{lem}{Lemma}{Lemmas}
\crefname{prop}{Proposition}{Propositions}\Crefname{prop}{Proposition}{Propositions}
\crefname{cor}{Corollary}{Corollaries}\Crefname{cor}{Corollary}{Corollaries}
\crefname{defi}{Definition}{Definitions}\Crefname{defi}{Definition}{Definitions}
\crefname{rmk}{Remark}{Remarks}\Crefname{rmk}{Remark}{Remarks}
\crefname{example}{Example}{Examples}\Crefname{example}{Example}{Examples}
\crefname{question}{Question}{Questions}\Crefname{question}{Question}{Questions}

\newcommand{\SU}{\operatorname{SU}}

\newcommand{\ad}{\textnormal{ad}}

\newcommand{\MT}{\mathbf{MT}}

\newcommand{\Ad}{\operatorname{Ad}}

\DeclareMathOperator{\Id}{Id}
\DeclareMathOperator{\Mod}{Mod}

\DeclareMathOperator{\tr}{Tr}

\newcommand{\Aut}{\operatorname{Aut}}

\newcommand{\rk}{\operatorname{rk}}

\newcommand{\Comm}{\operatorname{Comm}}

\newcommand{\id}{\operatorname{Id}}

\newcommand{\VV}{{\mathbb V}}

\newcommand{\HL}{\textnormal{HL}}

\newcommand{\kab}{\mathrm{Kab}}

\newcommand\nc{\newcommand}
\nc\on{\operatorname}
\nc\renc{\renewcommand}

\nc\mf\mathfrak
\nc\mc\mathcal
\nc\mb\mathbb
\nc\msf\mathsf
\nc\mscr\mathscr

\newcommand{\Z}{\mathbb{Z}}
\newcommand{\Q}{\mathbb{Q}}

\newcommand{\R}{\mathbb{R}}

\newcommand{\A}{\mathbb{A}}

\newcommand{\Oo}{\mathcal{O}}

\usepackage[normalem]{ulem}

\newcommand{\C}{\mathbb{C}}

\newcommand{\Qbar}{\overline{\mathbb{Q}}}

\newcommand{\GL}{\mathbf{GL}}
\newcommand{\SL}{\mathbf{SL}}
\newcommand{\PSL}{\mathbf{PSL}}
\newcommand{\G}{{\mathbf G}}

\newcommand{\Tr}{\mathrm{Tr}}

\usepackage{microtype}

\begin{document}

\title{Murphy's Law in non-abelian Hodge theory}\blfootnote{\emph{2020
    Mathematics Subject Classification}. 
    14D07, 14H15, 30F60, and 32G15.}\blfootnote{\emph{Key words and phrases}. Character varieties, $S$-integral points, variations of Hodge structures, non-abelian Hodge theory.}\date{\today}  
\author{Gregorio Baldi and Yeuk Hay Joshua Lam} 

\begin{abstract}
We construct explicit examples of non-integral variations of $\mathbb{Q}$-Hodge structures. Our approach leverages Fenchel--Nielsen-type parameterizations, due to Kabaya and Maskit, of the Teichmüller component of relative character varieties. Additionally, we discuss various Diophantine results concerning the $\mathcal{O}_{K,S}$-integral points of such character varieties, and give a new proof of Beauville's classical theorem on families of elliptic curves. We conclude by collecting \emph{pathological behaviors} of the Hodge locus of non-integral $\Q$VHS, in particular the failure of the  Cattani--Deligne--Kaplan theorem and the Andr\'e--Oort conjecture;  our results indicate  that for $\mb{Q}$VHS which are not $\mb{Z}$VHS, what can go wrong must go wrong.

\end{abstract}
\maketitle

\tableofcontents

\section{Introduction}\label{sec:intro}

Let $X$ be a smooth quasi-projective variety, and $R $ a ring included in $\R$ or equal to $\C$. Throughout this paper, by $R$VHS we mean a \emph{polarized pure} variation of Hodge structures with $R$-coefficients over $X$, in the sense of Simpson's influential paper \cite{Simpson}. 

\subsection{Motivation: Simpson's non-abelian Hodge and standard conjectures}
The Hodge conjecture provides a criterion for determining when a cohomology class in $H^*(Y,\Q)$ of a smooth projective complex variety $Y$ arises as a  $\Q$-linear combination of algebraic cycles. The \emph{non-abelian} analogue of the classical Hodge conjecture is often formulated as follows (cf. \cite[Conj. 12.4]{zbMATH01126801}):

\begin{conj}
Let $X$ be a smooth complex quasi-projective variety. A $\Z$-local system on $X$ that underlies a complex polarized variation of Hodge structure is of ``geometric origin''.
\end{conj}

One first difference is that the classical Hodge conjecture does not hold with $\Z$-coefficients, as first observed by Atiyah and Hirzebruch in \cite{zbMATH03177307}, but, nevertheless, the non-abelian version is stated with $\Z$-coefficients. Thus, a natural question is: could the non-abelian Hodge conjecture be true for $\Q$-local systems? The simple fact that the conjecture was stated with $\Z$-coefficients suggests that the answer should be no. However, what is an example of a non-integral (or that is not of ``geometric origin'') $\Q$VHS? A simple way to do so is to take a unitary character valued in  $\overline{\mb{Q}}$, and to take the sum of  its Galois conjugates; a slight generalization is to take pushforwards of such $\mb{Q}$VHS along maps of algebraic varieties. One could rule out such examples by insisting that the local systems have sufficiently large monodromy. To that end, we prove the following:
\begin{theor}\label{thm: main-direct-factor-formulation}
For any $g \geq 2$ there are infinitely many non-isomorphic genus $g$ curves supporting a rank two local system, with   trivial determinant and Zariski dense monodromy in $\SL_2$, which is a complex direct factor of a non-integral  $\Q$VHS. Moreover, the adjoint local system $\ad(\rho)$ underlies a non-integral $\mb{Q}$VHS.

\end{theor}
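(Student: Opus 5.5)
Our plan is to produce the local system from uniformization: a Fuchsian representation $\rho\colon \pi_1(C)\to \SL_2(\R)$ (a lift of the uniformizing representation into $\PSL_2(\R)$) underlies an $\R$VHS of weight one with Hodge numbers $(1,1)$. More generally, every representation in the Teichmüller component does so, after changing the complex structure to the one that it uniformizes. If $\rho$ takes values in $\SL_2(\Oo_K)$ for a totally real number field $K$, then restriction of scalars $\mathrm{Res}_{K/\Q}$ gives a $\Q$-local system of rank $2[K:\Q]$ whose complexification is $\bigoplus_{\sigma} \rho^\sigma$. Integrality is harmless here. The idea is the reverse: we want $\rho$ with \emph{non-integral} traces. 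The points to arrange are the following. First, $\rho$ has entries in a number field $K$ with trace field not contained in $\Oo_K$. Second, every Galois conjugate $\rho^\sigma$ again underlies a $\C$VHS, so that the direct sum is a $\Q$VHS. Third, the Zariski closure of the monodromy is $\SL_2$; this is automatic for a Fuchsian group.

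For the second point we use a Fenchel--Nielsen-type parameterization of the Teichmüller component, following Kabaya and Maskit. It expresses the representations in explicit rational or algebraic coordinates (traces along a pants decomposition together with twist parameters) and shows that an open semialgebraic region of real coordinates gives discrete faithful representations. We then choose coordinates in a totally real field $K$ such that all real embeddings $\sigma\colon K\to\R$ land in the Teichmüller region. Such a choice exists: by weak approximation, $K$ is dense in $\prod_\sigma \R$, and the Teichmüller region is open in each coordinate. We also choose the coordinates so that some trace, say $\tr\rho(\gamma)$, has negative valuation at some finite prime. Since weak approximation lets us impose simultaneously conditions at finitely many archimedean and non-archimedean places, both requirements can be met.

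Then each $\rho^\sigma$ is Fuchsian, hence uniformizes some genus $g$ curve $C_\sigma$. Here lies the main obstacle: the local systems $\rho^\sigma$ live on \emph{different} Riemann surfaces $C_\sigma$, while we need a single curve $C$ on which every conjugate is a VHS. To fix this, I would instead only demand that $\rho^\sigma$ be Fuchsian for one embedding and \emph{unitary} (i.e.\ conjugate into $\SU_2$, hence a VHS concentrated in one Hodge degree, on any complex structure) for all others. This is the Deligne--Mostow / arithmetic-group-style setup, now without integrality. In the Kabaya parameterization, the unitary locus is again an open set of real coordinates: we need traces in $(-2,2)$, plus an open condition. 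So, by weak approximation, we choose $K$ totally real and coordinates in $K$ that lie in the Teichmüller region at $\sigma_0$, in the unitary region at the other embeddings, and are non-integral at one finite place. Then $\mathrm{Res}_{K/\Q}\rho$ on $C=C_{\sigma_0}$ is a $\Q$VHS. It is polarized, since each summand is polarized and the real structure is compatible. It is non-integral, because a $\Z$-structure would force all traces to be algebraic integers. Varying the parameters yields infinitely many non-isomorphic curves, by the properness of the period map, or simply because only countably many points are used and they map to infinitely many points of $M_g$. The adjoint statement follows by taking $\mathrm{Res}_{K/\Q}\ad(\rho)$. This has weight $0$ at $\sigma_0$ and is unitary elsewhere. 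Its traces $\tr\rho(\gamma)^2-1$ are still non-integral, and Zariski density of the adjoint monodromy in $\mathrm{PGL}_2$ follows from that of $\rho$ in $\SL_2$.

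The hardest step is verifying that the unitary region is nonempty and open in the chosen coordinates, while the Teichmüller region is hit at the distinguished place. I would first attempt this with the trace coordinates: a real character with all traces in $[-2,2]$ is $\SU_2$-valued, and openness holds near irreducible characters.
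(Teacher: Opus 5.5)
\paragraph{There is a genuine gap, at exactly the step you flag as hardest, and as set up it cannot be carried out.}

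If $\rho$ takes values in $\SL_2(K)$ with $K$ totally real, then every conjugate $\rho^\sigma$ lands in $\SL_2(\R)$. A subgroup of $\SL_2(\R)$ that is conjugate into $\SU_2$ inside $\SL_2(\C)$ has compact closure in $\SL_2(\R)$. It therefore lies in a conjugate of $\mathrm{SO}(2)$ and is abelian, which contradicts Zariski density. So ``Fuchsian at $\sigma_0$, unitary at the other real places'' is impossible for an $\SL_2(K)$-valued representation.

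The same obstruction shows up in Kabaya's coordinates. For real $e_i\notin\{0,\pm1\}$ each $\tilde\rho(\gamma_i)$ has real eigenvalue $\pm e_i$ of modulus $\neq 1$, so it is hyperbolic. Hence no real Kabaya parameter gives a unitary representation: the ``unitary region'' you want to approximate is empty, since traces in $(-2,2)$ would need $e_i$ on the unit circle.

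Making the compact-at-other-places picture work would require two things. First, characters with traces in $K$ realized over a quaternion algebra ramified at the other archimedean places. Second, weak approximation on the character variety itself; density of $K$ in $\prod_\sigma\R$ says nothing about $K$-points of a subvariety of trace space. With a quaternion algebra, your rank-$2[K:\Q]$ restriction of scalars with complexification $\bigoplus_\sigma\rho^\sigma$ would also have to be replaced. You supply none of this.

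\paragraph{The obstacle you identified disappears if you take $K=\Q$, and this is what the paper does.}

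The paper chooses all Kabaya parameters rational in the Teichm\"uller region $e_i<-1$, $t_i>0$, with $e_1=-1-1/\prod_{p\in S}p$. There are then no other embeddings, so the conjugates-on-different-curves problem is gone. Kabaya's map sends $\Q$-points to $\PSL_2(\Q)$-characters, so the uniformizing representation has adjoint trace field $\Q$. Meanwhile a lift satisfies $\tr\tilde\rho(\gamma_1)=\pm(e_1+e_1^{-1})\notin\Z$.

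Even here your restriction-of-scalars step is not available, because the $\SL_2$-lift need not be realized over $\Q$. The paper handles this in three steps (\Cref{lem: trace-field-Q-implies-QVHS}, \Cref{cor: adjoint-tr-field-Q-implies-VHS}):
\begin{enumerate}
\item It passes to the finite-index subgroup $\Gamma^{(2)}$, i.e.\ a finite \'etale cover, whose trace field lies in the adjoint trace field.
\item It realizes that subgroup in a quaternion algebra over $\Q$ split by an imaginary quadratic field, giving a rank-four $\Q$VHS that contains $\pi^*\rho$.
\item It pushes forward to the original curve.
\end{enumerate}
Non-integrality then follows from \Cref{propnoproof}. The adjoint $\ad(\rho)$ is a $\PSL_2(\Q)$-representation on $\mathfrak{sl}_2(\Q)$, so it is directly a $\Q$VHS, with non-integral traces $\tr\tilde\rho(\gamma)^2-1$.
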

\begin{rmk}
    In the theorem above, the need to pass to the adjoint to get a $\mb{Q}$VHS is due to the usual problem that, for an arbitrary group, a $\mb{C}$-representation of dimension $r$  with traces in the field $\mb Q$ need not be defined on a $\mb{Q}$-vector space of dimension $r$. See \Cref{lem: trace-field-Q-implies-QVHS} for a precise formulation.
\end{rmk}
Other examples of things that ``go wrong'' in non-abelian Hodge theory are discussed, whence the title \emph{Murphy's Law}.

The $\mb{Q}$VHS in \Cref{thm: main-direct-factor-formulation} are evidently not of geometric origin, since the associated local systems are not integral. On the other hand, Simpson's standard conjecture \cite[p. 372]{simpson1990transcendental} says that $\mb{C}$-local systems which are bi-algebraic points of the Riemann--Hilbert correspondence are of geometric origin. One could optimistically make the analogous conjecture with the Riemann--Hilbert correspondence replaced by the Simpson correspondence: namely that a bi-algebraic point of the Simpson correspondence is a  local system of geometric origin; we refer to this conjecture as the \emph{Higgs standard conjecture}.

We find that the examples in \Cref{thm: main-direct-factor-formulation} furnish interesting test cases of this conjecture. Concretely, we have the following:
\begin{conj}[Consequence of Higgs standard conjecture]\label{conj: special-higgs-standard}
    Each genus $g$ curve occurring in \Cref{thm: main-direct-factor-formulation} is not defined over $\overline{\mb{Q}}$.
\end{conj}
To deduce \Cref{conj: special-higgs-standard} from the Higgs standard conjecture, the key point is that each Higgs bundle corresponding to a local system in the statement of \Cref{thm: main-direct-factor-formulation} is intrinsic to the underlying curve, and is defined over $\Qbar$ whenever the curve is.
\subsection{Further results}

Let $\mathrm{Primes} = \{2, 3, 5, 7, \dots\}$ denote the set of (positive) prime numbers. The first question we address is the following. Given a finite subset $S \subset \mathrm{Primes}$, are there pairs $(X,\VV)$ of smooth projective varieties supporting $S$-integral but not $S'$-integral for any smaller subset $S' \subset S$? How often does this phenomenon happen? To clarify the above question we need a definition. First of all, whenever $\rho$ denotes a (complex) local system (e.g. the one associated to $\mathbb{V}$), we set its trace ring to be
\begin{equation}\label{trring}
    \Z[\rho]:= \Z[\Tr(\rho(\gamma)) \mid \gamma \in \pi_1(X)].
\end{equation}

\begin{defi}
Let $S$ be a finite set of primes. Let $X$ be a smooth quasi-projective variety and $\rho$ an irreducible complex local system on $X$. We say $\rho$
\begin{itemize}
    \item \emph{has property $(P_S)$} if it has trivial determinant, trace ring \textbf{equal} to $\Z[S^{-1}]$, and it is a complex direct factor of an irreducible $\Q$VHS on $X$. 
    \item \emph{has property $(\widetilde{P}_S)$} if it has trivial determinant, trace ring \textbf{containing} $\Z[S^{-1}]$, and it is a complex direct factor of an irreducible $\Q$VHS on $X$. 
\end{itemize}

\end{defi}

\begin{theor}\label{thm: main}
For any $g \geq 2$ and any non-empty finite set $S \subset \mathrm{Primes}$, there exist infinitely many non-isomorphic, smooth, projective curves $X/\mb{C}$ of genus $g$ supporting a rank two complex local system with property $(\widetilde{P}_S)$. Moreover these rank two complex local system have discrete and faithful monodromy. 
\end{theor}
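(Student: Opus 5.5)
The plan is to produce Fuchsian representations $\rho\colon \pi_1(\Sigma_g)\to \SL_2(\R)$ whose traces all lie in $\Z[S^{-1}]$ and generate a ring containing $\Z[S^{-1}]$. Each such $\rho$ is discrete and faithful, hence the uniformizing representation (up to sign lift) of a genus $g$ Riemann surface $X$. The abstract points to Fenchel--Nielsen-type coordinates (Kabaya, Maskit) on the Teichmüller component of the $\SL_2$ character variety. In these coordinates the traces of a fixed finite generating set of $\pi_1$ are explicit rational (or polynomial) functions of the coordinates: traces of the boundary curves of a pants decomposition, and twist-type parameters. We then choose coordinates in $\Z[S^{-1}]$, which is dense in $\R$ since $S\neq\emptyset$, inside the open region where the representation is Fuchsian. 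The goal is that every trace of every element of $\pi_1$ lands in $\Z[S^{-1}]$. This uses the standard fact that the trace ring of an $\SL_2$ representation is generated by $\tr\rho(\gamma_i)$, $\tr\rho(\gamma_i\gamma_j)$ and $\tr\rho(\gamma_i\gamma_j\gamma_k)$ for generators $\gamma_i$.

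The key steps, in order, are as follows.
\begin{enumerate}
\item Use Kabaya's parametrization, with a pants decomposition and a chosen generating set. In it, the matrices of the generators have entries that are polynomials in the parameters divided by a controlled denominator.
\item Exhibit an open set of parameter values, nonempty and satisfying the trace inequalities $|\tr|>2$ for the pants curves and so on, where the representation is discrete and faithful.
\item Pick infinitely many parameter tuples in $\Z[S^{-1}]$ in that open set. Arrange moreover that some trace has exact denominator divisible by each $p\in S$, for example by taking a boundary trace equal to $p^{-N}+p^{N}\cdot(\text{stuff})$ or simply $2+p^{-1}m$ with large $m$, so that the trace ring contains $\Z[S^{-1}]$.
\item Since the traces are real and lie in $\Q$, the trace field is $\Q$.
\end{enumerate}
Trivial determinant is automatic for $\SL_2$. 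Irreducibility follows from Zariski density, which holds for any Fuchsian group.

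Next, the $\Q$VHS structure. The uniformizing local system underlies a $\C$VHS, namely the Higgs bundle $K^{1/2}\oplus K^{-1/2}$. Its Galois conjugates are trivial because the trace field is $\Q$. So \Cref{lem: trace-field-Q-implies-QVHS} should apply: $\rho$ is defined over a quaternion algebra over $\Q$, and $\rho$ is a complex direct factor of a $\Q$-local system of rank $4$ given by restriction of scalars. Every complex constituent of this system is isomorphic to $\rho$, hence underlies a VHS, so it is an irreducible $\Q$VHS. This is exactly the mechanism referenced in the remark after \Cref{thm: main-direct-factor-formulation}. I would cite that lemma rather than redo it.

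For infinitely many non-isomorphic curves, distinct points of Teichmüller space can give isomorphic curves only via the mapping class group. The set of $\Z[S^{-1}]$-points chosen is infinite, and each mapping class group orbit meets a bounded region of the Fenchel--Nielsen length coordinates only finitely often. Alternatively, vary a length coordinate to infinity, so the systole goes to zero and no two curves are isomorphic. The main obstacle is step 1 together with the integrality claim: Kabaya's formulas may involve square roots or denominators such as $\tr^2-4$. I would first try to select the parametrization in which the generator matrices have entries in $\Z[x_i,x_i^{-1}]$, as Maskit's explicit generators do. If square roots appear, I would choose the trace parameters so that the required discriminants are squares in $\Z[S^{-1}]$, for instance by parametrizing traces as $t+t^{-1}$ with $t\in\Z[S^{-1}]^\times$, say $t=p^{n}$.
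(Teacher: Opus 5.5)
\textbf{Gap: the step that all traces lie in $\Z[S^{-1}]$, hence in $\Q$, is not established.} This is the load-bearing step of your argument: it is what lets you apply \Cref{lem: trace-field-Q-implies-QVHS}. You flag it yourself as the main obstacle, and it fails for two separate reasons.

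\emph{Denominators.} Kabaya's reconstruction (\Cref{prop: pop-reconstruct} and the gluing formulas) gives the generator matrices as rational functions of the eigenvalue, fixed-point and twist parameters, with non-trivial denominators. So inputs in $\Z[S^{-1}]$ need not produce traces in $\Z[S^{-1}]$, and nothing in your plan controls this.

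\emph{Square roots in the lift.} This matters more for applying the lemma. At a $\Q$-point, \Cref{thm: k-param} only produces a $\PSL_2$-representation defined over $\Q$. Each $\beta_i$ is specified projectively, as the element of $\mathrm{PGL}_2(\Q)$ carrying one triple of fixed points to another. An $\SL_2$-lift is $M/\sqrt{\det M}$ with $M\in\GL_2(\Q)$, whose trace $\tr M/\sqrt{\det M}$ need not be rational. So the $\SL_2$-trace field can be a multiquadratic extension of $\Q$; only the adjoint trace field is guaranteed to be $\Q$. Your remedy (writing traces as $t+t^{-1}$ with $t$ an $S$-unit) only concerns rationality of eigenvalues along pants curves. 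The eigenvalue parameters $e_i$ already give that, and it does not touch the determinant square roots.

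\textbf{The fix: containment is all that is needed.} Property $(\widetilde P_S)$ only asks that the trace ring \emph{contain} $\Z[S^{-1}]$, so no control of denominators is required. The paper's route is as follows.
\begin{enumerate}
\item Take all $e_i,t_i\in\Q$ inside $E_{\mathrm{Teich}}$ from \Cref{lem:kabaya-teich}; rational points are dense there. Set $e_1=-1-1/\prod_{p\in S}p$.
\item The resulting $\PSL_2$-representation is defined over $\Q$, so its adjoint trace field is $\Q$.
\item Apply \Cref{cor: adjoint-tr-field-Q-implies-VHS}: pass to $\Gamma^{(2)}$, whose trace field lies in the adjoint trace field; apply \Cref{lem: trace-field-Q-implies-QVHS} on the finite \'etale cover; push forward. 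This gives the complex direct factor of a $\Q$VHS.
\item Some lift has $\tr\tilde\rho(\gamma_1)=\pm(e_1+e_1^{-1})=a/M$ with $\gcd(a,M)=1$ and $\prod_{p\in S}p\mid M$. Hence $1/M$, and so $\Z[S^{-1}]$, lies in the trace ring. Extra primes in the denominator are harmless.
\end{enumerate}
If your plan could be completed, it would give the stronger exact property $(P_S)$ in genus $g$. The paper only achieves that in signature $(0;4)$, using Maskit's explicit matrices.

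\textbf{Minor: your argument for infinitely many non-isomorphic curves does not work as stated.}
\begin{enumerate}
\item Dehn twists about pants curves fix all lengths, so a region bounded only in length coordinates can meet a single $\Mod(\Sigma)$-orbit infinitely often.
\item Sending a length to infinity does not shrink the systole.
\end{enumerate}
Either use a compact region in full Fenchel--Nielsen coordinates together with proper discontinuity of the $\Mod(\Sigma)$-action, or let some $e_i$ with $i\neq 1$ tend to $-1$ through rationals, which drives the systole to $0$.
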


If $S=\emptyset$, then there are only finitely many genus $g$ curves supporting a $\Z$VHS as we explain in \Cref{rmkZpoints}.

\begin{rmk}
In the study of rank two local systems, the restriction to the case of curves appears quite natural by virtue of the Corlette--Simpson theorem \cite{MR2457528}. Similar questions in the case of higher rank local systems (or, more precisely, local systems with monodromy bigger than $\SL_2$), appear very difficult and we are aware of no strategies to produce non-trivial examples.
\end{rmk}

Although \Cref{thm: main} deals with projective curves, it is also possible to find examples in the case of  quasi-projective curves (with, say, infinite monodromies at the punctures, to avoid trivial examples). We say that a Riemann surface has \emph{signature $(g,n)$} if it is obtained from a compact genus $g$ Riemann surface by removing $n$ points. Our second main result deals with the signature $(0;4)$ version of \Cref{thm: main} and gives a new proof of  a classical result of Beauville \cite{zbMATH03794233} (in the case where $S=\emptyset$).

\begin{theor}\label{thm1.6}
Let $S$ be a finite set of primes.
\begin{itemize}
    \item If $S$ is empty, then there are only four curves of signature $(0;4)$  supporting a rank two local system with property\footnote{If $S=\emptyset$, property ($P_S$) simply asserts that they are direct factors of a $\Z$VHS. More generally we will prove that there are only four curves of signature $(0;4)$ that admit a modular embedding. See \Cref{4integersol} for a precise statement.} ($P_S$);
    \item If $S$ is non-empty then there are infinitely many non-isomorphic curves of signature $(0;4)$ supporting a rank two local system with property ($P_S$). Moreover these rank two complex local system have discrete and faithful monodromy.
\end{itemize} 
\end{theor}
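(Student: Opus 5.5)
We reduce both parts to counting $\mathcal{O}$-integral points on an explicit relative character variety. For a four-punctured sphere, take the relative $\SL_2$-character variety with all four boundary traces fixed; the natural choice is boundary traces $-2$, i.e.\ parabolic monodromy as for the uniformizing representation, which the abstract's Kabaya--Maskit setup suggests. In trace coordinates $x=\Tr(\rho(ab)), y=\Tr(\rho(bc)), z=\Tr(\rho(ac))$, this is a Markov-type cubic surface
\[ x^2+y^2+z^2+xyz = Ax+By+Cz+D, \]
with $A,B,C,D$ determined by the boundary traces; for all boundary traces $-2$ it becomes $x^2+y^2+z^2+xyz-8(x+y+z)\dots$, to be computed. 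The trace ring is generated by $x,y,z$ together with the boundary traces, so property $(P_S)$ amounts to a point with coordinates in $\Z[S^{-1}]$ that actually generate $\Z[S^{-1}]$.

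Next we characterize the direct-factor condition. A rank two $\rho$ with traces in $\Q$ (in fact in $\Z[S^{-1}]$) gives, via \Cref{lem: trace-field-Q-implies-QVHS}, a $\Q$-structure on $\ad\rho$. If $\rho$ is a Teichmüller (discrete faithful, Fuchsian) representation, it underlies a VHS, namely the uniformizing one. Since there are no Galois conjugates, the sum of conjugates is $\rho$ itself, so being a complex direct factor of a $\Q$VHS reduces to $\rho$ being Fuchsian. In the converse direction, a VHS of rank two with trivial determinant and infinite monodromy comes from a uniformization or modular embedding. So for $S=\emptyset$ the requirement is that every Galois conjugate be unitary or Fuchsian, which over $\Q$ forces $\rho$ itself to be Fuchsian and uniformizing, up to the complex conjugate.

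With this reduction, the $S=\emptyset$ case means counting the integral points of the cubic surface that lie in the Teichmüller component, modulo the mapping class group action. The mapping class group acts by Vieta involutions $x\mapsto -yz+A-x$, and so on, which preserve integrality. Markov-type descent, which minimizes $|x|+|y|+|z|$ using Fuchsian trace inequalities ($|x|,|y|,|z|>2$ on the Teichmüller component), reduces every integral point to a finite set of fundamental solutions. We then check that exactly four orbits give distinct curves; these should correspond to the four classical arithmetic congruence subgroups of level $\Gamma(2)$ type, matching Beauville's list. This descent, together with the verification that each fundamental solution is genuinely Fuchsian and that the four curves are non-isomorphic, is the main obstacle. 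The inequalities must be sharp enough to bound the minimal representatives.

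For $S\neq\emptyset$, pick $p\in S$. Rather than using integral points, we use the Kabaya--Maskit Fenchel--Nielsen parametrization of the Teichmüller component. Choose length and twist parameters so that $x,y,z$ lie in $\Z[S^{-1}]$ with denominators divisible by every $p\in S$; for example, take $x$ to be a large integer greater than $2$ and solve the quadratic in $y$, arranging $y,z\in\Z[S^{-1}]$ by a Pell-type or scaling choice such as $y=u/p^k$. This produces infinitely many rational points with the prescribed denominators inside the real region $|x|,|y|,|z|>2$, which is the Fuchsian locus. Distinct mapping-class orbits give non-isomorphic curves, because the trace set determines the Fuchsian group up to conjugacy and only finitely many points share an orbit with bounded height in each class. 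The trace ring then equals $\Z[S^{-1}]$, and discreteness and faithfulness are automatic on the Teichmüller component.
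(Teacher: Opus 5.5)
\textbf{The genuine gap is in the case $S\neq\emptyset$.} Infinitely many $\Z[S^{-1}]$-points on the Teichm\"uller component come for free, since the mapping class group orbit of a single such point is already infinite. So the entire content of the second bullet is that your points lie in \emph{infinitely many distinct orbits}, and you give no argument for this.

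\begin{itemize}
\item Your appeal to ``finitely many points of bounded height in each orbit'' does not help, because your points ($y=u/p^k$ with $k\to\infty$) have unbounded height.
\item By fixing $x$ you place all of them on the conic $\{\operatorname{tr}\rho(ab)=x\}$. The Dehn twist about that curve preserves this conic and has infinite orbits on it, so your points could well fall into finitely many orbits.
\item The construction itself is not pinned down. You do not show that the discriminant in $z$ is a square, that the resulting point is Fuchsian, or that the trace ring is all of $\Z[S^{-1}]$.
\item The Teichm\"uller component is $\{x,y,z<-2\}$ (\Cref{sec:examples}), not $\{|x|,|y|,|z|>2\}$. So ``$x$ a large integer greater than $2$'' lands off it.
\end{itemize}

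The missing idea is to put the $S$-denominator on a \emph{short} curve, through a non-integral trace just below $-2$. In Maskit's coordinates the paper takes $y=-1$ and $x_b+1/x_b=2+\epsilon^{-b}$ with $\epsilon=\prod_{p\in S}p$ (\Cref{prop:unitfamily}). This gives the traces $-2-\epsilon^{-b}$, $-16\epsilon^{b}-6-\epsilon^{-b}$ and $-16\epsilon^{b}-2$, which generate exactly $\Z[\epsilon^{-b},16\epsilon^b]=\Z[S^{-1}]$. The points $(x_b,-1)$ are distinct points of Maskit's fundamental domain $\mathcal D$, hence lie in distinct orbits. Equivalently, for $b$ large the collar lemma makes the curve of trace $-2-\epsilon^{-b}$ the unique systole, so its trace is an orbit invariant. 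Because the peripheral monodromy is unipotent, the last part of \Cref{lem: trace-field-Q-implies-QVHS} then makes $\rho$ itself, not just $\ad\rho$, a $\Q$VHS.

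\textbf{For $S=\emptyset$, your route genuinely differs from the paper's, but it is only announced.} The paper uses Maskit's explicit fundamental domain to get $3\le\lambda=-F_1\le 7$, then does a finite check on $F_3$ and $F_2$. You propose Markoff-type descent by Vieta involutions, and that descent does close up. Write $a\le b\le c$ for the negatives of the coordinates. A triple not reducible by flipping $c$ satisfies $2c\le ab-8$, which gives $(a-4)b^2\le 40b+56$. For $a=3$ and $a=4$ the conic in $(b,c)$ forces $b\le 19$ and $b\le 11$ respectively. However, you call this ``the main obstacle'' and carry out neither the bound nor the orbit identification and finite check, so the first bullet is not proved either.

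If you do carry it out, three further points need care.
\begin{itemize}
\item A single Vieta involution comes from an orientation-reversing mapping class, so your orbits identify $X$ with $\overline X$. This is harmless here, since the four curves are real, but it must be said.
\item The answer is not of ``level $\Gamma(2)$ type'': Beauville's list includes $\Gamma_0^0(5)$, $\Gamma_0^0(6)$ and $\Gamma(3)$.
\item Your claim that a rank two VHS with trivial determinant and infinite monodromy must come from a uniformization is false as stated. Restricting the Legendre family to $\mathbb P^1\setminus\{0,1,\infty,a\}$ gives a counterexample for every $a$. What is needed, and what the paper tacitly uses by reducing to \Cref{4integersol}, is that the local system is the uniformizing one. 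That holds once the local monodromies are unipotent, by the Higgs-bundle computation in the Claim of that proof.
\end{itemize}
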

\begin{rmk}
    For the first part of \Cref{thm1.6}, we are in fact able to plot  all the points in the Teichm\"uller space corresponding to the signature $(0; 4)$ Riemann surfaces--see \Cref{fig:funddomain} for details.
\end{rmk}
\begin{rmk}
    To prove the second part of \Cref{thm1.6}, we prove that the $\mc{O}_{K,S}$-points of the Teichm\"uller component of a signature $(0;4)$ character variety form infinitely many mapping class group orbits, as long as $(K, S)\neq (\mb Q, \emptyset)$. Related questions on integral points of character varieties have been studied by Esnault--de Jong and Coccia--Litt \cite{2025arXiv250700167C} and have applications towards the Ekedahl--Shepherd-Barron--Taylor conjecture \cite{lam-litt-non-ab-p-curv}, although here we are more interested in mapping class group orbits of $S$-integral points rather than $S$-integral points themselves--see \Cref{coccialittconj} for details.
\end{rmk}
The VHS we construct are \emph{uniformizing} in the sense of Simpson \cite[Sec. 9]{zbMATH04096412}. Specifically, we find them by considering the Teichmüller component of the character variety and certain explicit parametrizations appearing in the work of Maskit \cite{zbMATH01887454} and Kabaya \cite{kabaya}. 
The same techniques can be applied to study $K$VHS for real number fields $K$ larger than $\Q$, and, in fact, we obtain a number of \emph{Diophantine results} about $S$-integral points of character varieties. In the process we will also collect refinements of a conjecture of  Coccia--Litt \cite{2025arXiv250700167C}, at least after it is specialized to the case of one dimensional bases. We will come back to this point in \Cref{coccialittconj}.

On a related note, we remark here that the literature contains two definitions of $K$VHS: the one given above and another that requires the local systems associated with every embedding of $K$ into $\C$ to underlie a VHS (see e.g. \cite[Def. 5.1]{zbMATH08084572}). We also construct examples showing that these two definitions do not generally agree, as one might expect.

\begin{theor}\label{another thm}
Let $K \subset \R$ be a totally real number field. There exist non-unitary $K$VHS (both integral and non-integral) that are not $K$-factors of a $\Q$VHS. More precisely, for some embedding $K \to \R$ different from the identity, the resulting $\R$-local system is not unitary and does not underlie a VHS (but it is faithful and discrete). 
\end{theor}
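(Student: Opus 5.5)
The plan is to produce a Fuchsian (discrete, faithful) representation $\rho\colon \pi_1(X)\to \SL_2(\R)$ of a hyperbolic Riemann surface $X$ with the following properties:
\begin{itemize}
\item all traces of $\rho$ lie in $\Oo_K$ (or $\Oo_{K,S}$) for a real quadratic field $K=\Q(\sqrt d)$;
\item the Galois conjugate $\rho^\sigma$ is still discrete and faithful, with image in $\SL_2(\R)$, but is \emph{not} conjugate to $\rho$ or to its complex conjugate.
\end{itemize}
The whole argument then rests on Simpson's characterization of rank two $\R$-VHS on a curve. A rank two $\SL_2(\R)$-local system with Zariski dense monodromy underlies a VHS if and only if it is either unitary or \emph{uniformizing for the given complex structure}. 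In the latter case the Higgs field is the isomorphism $K_X^{1/2}\to K_X^{-1/2}\otimes K_X$, and the local system is the holonomy of the hyperbolic metric of $X$ up to sign and twist. By uniqueness of the uniformizing Higgs bundle for a fixed complex structure, the uniformizing local system is determined by $X$ up to $\pm$ and complex conjugation.

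So $\rho$, taken as the uniformization of $X$, gives a $K$VHS in the first sense. Its monodromy lies in $\SL_2(K)$ after a quaternion-algebra argument; if not, I will pass to $\ad(\rho)$, which is defined over the trace field, as in \Cref{lem: trace-field-Q-implies-QVHS}. The conjugate $\rho^\sigma$ is then non-unitary, since it is discrete and faithful with Zariski dense image in a noncompact group. It is also non-uniformizing for $X$, since it is not conjugate to $\rho^{\pm}$ or its complex conjugate. Hence $\rho^\sigma$ underlies no VHS on $X$. It follows that $\rho$ cannot be a $K$-factor of a $\Q$VHS: the restriction of scalars of such a factor would contain $\rho^\sigma$ as a complex summand, and summands of a VHS are VHS by Deligne's semisimplicity.

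To produce the pair $(\rho,\rho^\sigma)$ I will use the Kabaya/Maskit Fenchel--Nielsen-type coordinates on the Teichmüller component. In signature $(0;4)$ (or for a closed genus $g$ surface) these coordinates exhibit the Teichmüller component as a semialgebraic open set $U$ inside an explicit real algebraic parameter space, where the generators of $\rho$ are polynomial in the parameters. Concretely, the trace coordinates must satisfy inequalities such as $\lvert x\rvert>2$ on the pants curves together with the twist conditions. I then choose parameters $t\in K^N$ such that both $t$ and $\sigma(t)$ lie in $U$ and $t\neq\sigma(t)$. This is possible because $U$ is open and $K\hookrightarrow \R^2$ via $(\mathrm{id},\sigma)$ has dense image: $U\times U$ is open in $\R^{2N}$, so it contains points of $K^N$. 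For the integral version I instead need $t\in\Oo_K^N$, and $\Oo_K$ is a lattice in $\R^2$, so density fails. Instead I will choose the boundary and trace parameters large, with units $\epsilon^n$ scaled to make both embeddings exceed $2$, as in a Pisot/Salem-type construction. The non-integral version uses $\Oo_{K,S}$, which is dense in $\R^2$ when $S\neq\emptyset$.

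The main obstacle is checking that $\rho^\sigma$ is genuinely not isomorphic to $\rho$ or its conjugate. Equivalently, $t$ and $\sigma(t)$ must not lie in the same mapping class group orbit relative to the fixed marking, since the complex structure is fixed by $\rho$. With the marking fixed this reduces to $t\neq\sigma(t)$ as points of Teichmüller space; I also need to exclude a twist by a character of order two. Both are easy to arrange by making one trace irrational. A second subtle point is that the complex structure of $X$ is determined by $\rho$ alone, so the Riemann surface that $\rho^\sigma$ uniformizes is different. That is exactly the mechanism that makes $\rho^\sigma$ fail to be a VHS on $X$.
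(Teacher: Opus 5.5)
\textbf{Non-integral case: same route as the paper, one misstatement.} Your argument for the non-integral examples is essentially the paper's. You take a $K$-rational point $t$ of Kabaya's chart with $t\neq\sigma(t)$ and $t,\sigma(t)\in E_{\mathrm{Teich}}$, and then use uniqueness of the VHS on the Teichm\"uller component for a fixed complex structure. Asking only one conjugate to lie in the chart, and using density of $K$ in $\R^2$ rather than the paper's shift-by-integers trick for all embeddings, is fine. It also works verbatim for any totally real $K\neq\Q$, not only quadratic fields.

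One claim is wrong: a non-unitary VHS with Zariski dense monodromy in $\SL_2(\R)$ need not be uniformizing. For $g\geq 3$, Hitchin's $\C^*$-fixed points $L\oplus L^{-1}$ with $0<\deg L<g-1$ give counterexamples. What you actually need, and what the paper uses (Hitchin's Thm.~11.2), is that the Teichm\"uller component (maximal Euler class) contains exactly one VHS for a given complex structure. That applies here because you arranged $\sigma(t)\in U$.

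\textbf{The gap: the integral half.} Integrality is a condition on traces, and in Kabaya's chart the parameters are not traces. One has $\Tr\tilde\rho(\gamma_i)=\pm(e_i+e_i^{-1})$, and $e_i+e_i^{-1}\in\Oo_K$ forces $e_i\in\Oo_K^\times$, since $e_i$ is then a root of $T^2-sT+1$ with $s\in\Oo_K$. For quadratic $K$, $e_i<-1$ and $\sigma(e_i)<-1$ give $N_{K/\Q}(e_i)>1$. Hence no point with $t,\sigma(t)\in E_{\mathrm{Teich}}$ is integral, whether or not $t\in\Oo_K^N$. (Conversely, this shows your density examples are automatically non-integral.) So $t\in\Oo_K^N$ is the wrong condition, and the sentence about units and a ``Pisot/Salem-type construction'' does not produce a point. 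The paper's own shortcut, taking $e_i\in\Oo_K$ and subtracting a large integer, runs into the same norm obstruction, so it cannot simply be copied.

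\textbf{A repair in signature $(0;4)$.} There the coordinates on $V_2$ are themselves the trace generators (\Cref{lem:tracering}).
\begin{itemize}
\item Choose a totally positive $\epsilon\in\Oo_K^\times$ with $\epsilon>1$ and $\sigma(\epsilon)\neq\epsilon$.
\item Take the triple of \Cref{prop:unitfamily}, $(F_1,F_2,F_3)=(-2-\epsilon^{-b},\,-16\epsilon^{b}-6-\epsilon^{-b},\,-16\epsilon^{b}-2)\in\Oo_K^3$.
\item Its $\sigma$-conjugate is given by the same formulas with $\sigma(\epsilon)^{-b}>0$ in place of $\epsilon^{-b}$. This is exactly the trace triple of the Maskit point $(x',-1)$ with $x'+1/x'=2+\sigma(\epsilon)^{-b}$ and $x'>1$. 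So it is again discrete, faithful and type-preserving, and it differs from the original point.
\item The parabolic monodromy lets $\rho$ descend to $\SL_2(K)$, as in the last part of \Cref{lem: trace-field-Q-implies-QVHS}.
\item The parabolic version of uniqueness of the uniformizing VHS then concludes exactly as in your non-integral argument.
\end{itemize}
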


\begin{rmk}
Certain related examples were also found by S. Ricker, P. Schmutz Schaller, and J. Wolfart \cite{zbMATH01805844, zbMATH01463511}; see also \Cref{rmkquadri}.
\end{rmk}

Finally, with the aim of collecting certain \emph{Murphy's laws in non-abelian Hodge theory}, in \Cref{sec3} we show that the Hodge locus of a non-integral $\mb Q$VHS behaves very differently from that of a $\Z$VHS: for example, the analogue of Andr\'e--Oort fails, in that the CM points can be dense without the $\mb Q$VHS being special (in the sense of unlikely intersection conjectures).

\subsection{Plan of the paper}
\Cref{prelim} fixes the relevant notions of $R$VHS as well as the notion of relative character varieties and some basics of Teichmüller theory. \Cref{sectioneasy} treats the unitary case. %as well as the special case of quadrilateral groups. 

\Cref{sec:04} develops Maskit's explicit parametrization of the Teichm\"uller component in signature $(0;4)$ and gives a new proof of Beauville's theorem. In the following \Cref{sec:dichotomy} we study the $S$-integral Markoff-type solutions. \Cref{sec:kabaya} recalls Kabaya's Fenchel--Nielsen parametrization, which is used in \Cref{sec:proofmain} to prove \Cref{thm: main} in arbitrary genus.

 \Cref{sec3} collects more examples of Murphy's law for the Hodge locus of non-integral $\Q$VHS with discrete monodromy.

\subsection*{Acknowledgments}

G.B. was partially supported by the grant ANR-HoLoDiRibey of the Agence Nationale de la Recherche. During the final stages of this work he was at the Institute for Advanced Studies of
Princeton and he is grateful for the amazing working conditions. He also thanks the Ambrose Monell Foundation as well as the Giorgio and Elena Petronio Fellowship Fund for
support.

J. L. was supported partially by the
DFG Walter Benjamin grant Dynamics on character varieties and Hodge theory (project number: 541272769). He is also grateful to the MPIM Bonn for excellent working conditions. 

\section{Preliminaries}\label{prelim}

\subsection{Trace field and VHS}\label{sec:def}

In this section, we explain how a $\R$VHS with trace field $\Q$ gives rise to a $\Q$VHS. This point is subtle, but well understood. For more on this topic, we refer to \cite{zbMATH04158959}, the discussion in \cite[Rmk. 2.2.3]{2025arXiv250202147F}, as well as \cite{zbMATH05560295}, and especially the end of Sec. 4 in \cite{Simpson}.

Let $G$ be a real semisimple algebraic group without compact factors and $\Gamma$ be a finitely generated subgroup of $G$. Denote by
\begin{displaymath}
\Ad: \Gamma \subset G \xrightarrow{\Ad} \Aut(\mathfrak{g})
\end{displaymath}
the adjoint representation in the automorphisms of the Lie algebra $\mathfrak{g}$ of $G$. For more regarding the following definition, see the so called Bass-Serre theory and in particular Bass' paper \cite{MR586867}.
\begin{defi}
We define the \emph{adjoint trace field} of $\Gamma$ as the field generated over $\Q$ by the set
\begin{displaymath}
\{\Tr \Ad (\gamma) : \gamma \in \Gamma\},
\end{displaymath}
and the \emph{adjoint trace ring} by 
\begin{displaymath}
\Z[\{\Tr \Ad (\gamma) : \gamma \in \Gamma\}].
\end{displaymath}
\end{defi}

The adjoint trace field is a finitely generated field extension of $\Q$. Let $G$ be the real group given by the Zariski closure of the monodromy representation, and assume that it is semisimple. Then, by a theorem of Vinberg \cite{MR0279206}, there is a $K$-form of $G$ such that the image of the monodromy representation lies in its $K$-points (up to finite index and real conjugation). In the case $G=\SL_2(\R)^+$, we also have the \emph{trace field}, namely the field generated by traces of elements of $\Gamma$; it is the fraction field of the trace ring appearing in \eqref{trring}, and contains the adjoint trace field.
%is a subfield of the trace field appearing in \eqref{trring}.

We conclude this subsection by recording two useful results.
\begin{lem}\label{lem: trace-field-Q-implies-QVHS}
    Let $X$ be a smooth complex variety, and $\rho: \pi_1(X)\to \SL_2(\mb{C})$ a representation which has Zariski dense image and is furthermore non-unitary (i.e. cannot be conjugated into $\SU_2$). Suppose that $\rho$ underlies a $\mb{C}$VHS, and that the trace field of $\rho$ is $\mb{Q}$. Then $\rho$ is a complex direct factor of a $\mb{Q}$VHS. 

    Moreover, if the image of $\rho$ contains a non-trivial unipotent element, then $\rho$ is itself a $\mb{Q}$VHS.
\end{lem}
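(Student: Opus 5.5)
Since the trace field of $\rho$ is $\mb{Q}$ and $\rho$ is absolutely irreducible (its image is Zariski dense in $\SL_2$), the $\mb{Q}$-span $A:=\mb{Q}[\rho(\pi_1(X))]\subset M_2(\mb{C})$ is a quaternion algebra over $\mb{Q}$. This is the standard Bass/Vinberg-type fact: traces in $\mb{Q}$ together with absolute irreducibility force $A\otimes_{\mb{Q}}\mb{C}\cong M_2(\mb{C})$, with $A$ central simple of dimension $4$ over $\mb{Q}$. Let $A$ act on itself by left multiplication. This gives a rank four $\mb{Q}$-local system $\mathbb{W}$ with monodromy $\pi_1(X)\to A^\times\subset \GL(A)$. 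After extending scalars, $A\otimes\mb{C}\cong M_2(\mb{C})\cong \mb{C}^2\oplus\mb{C}^2$ as left modules, so $\mathbb{W}_{\mb{C}}\cong \rho\oplus\rho$. In particular $\rho$ is a complex direct factor of $\mathbb{W}$.

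The next step is to put a $\mb{Q}$VHS structure on $\mathbb{W}$. Give $\mathbb{W}_{\mb{C}}=\rho\otimes \mb{C}^2$ the tensor product of the $\mb{C}$VHS on $\rho$ with the trivial weight-zero structure on $\mb{C}^2$. This is a $\mb{C}$VHS, and the task is to check that it is defined over $\mb{R}$ and then over $\mb{Q}$. Here I would use non-unitarity. A non-unitary Zariski-dense $\SL_2$-VHS has its real Zariski closure isomorphic to $\SL_2(\mb{R})$ up to conjugation, since the monodromy preserves a hermitian form of signature $(1,1)$ and $\mathrm{SU}(1,1)\cong\SL_2(\mb{R})$. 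Hence $A\otimes\mb{R}\cong M_2(\mb{R})$, and the period domain is the upper half plane. The weight-one Hodge filtration on $\rho_{\mb{R}}\cong\mb{R}^2$ then yields a complex structure $J$ on $A\otimes\mb{R}$, acting by right multiplication, which commutes with the left $\pi_1$-action. So $\mathbb{W}_{\mb{R}}$ carries a real VHS of weight one, and since $\mathbb{W}$ is defined over $\mb{Q}$, we get a $\mb{Q}$VHS. For the polarization, I would take $\Tr_{A}(x\,\alpha\,\bar y)$ with $\bar{\ }$ the canonical involution and $\alpha$ a suitable element. If that proves awkward, I would instead invoke the general principle from Simpson and Deligne: a local system defined over $\mb{Q}$ whose complexification is a direct sum of Galois-compatible VHS, with rigid Hodge data here because $\rho$ is irreducible, is a $\mb{Q}$VHS. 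I expect this step to be the main obstacle, namely checking carefully that the polarization and the Hodge filtration descend to the $\mb{Q}$-structure. The key input is that the VHS on an irreducible $\rho$ is unique up to shift, so the structure on $\mathbb{W}_{\mb{C}}$ is canonical and hence Galois/complex-conjugation invariant.

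For the second statement, suppose $\rho(\gamma)=u\neq 1$ is unipotent. Then $u-1$ is a nonzero nilpotent element of $A$, so $A$ is not a division algebra and hence $A\cong M_2(\mb{Q})$. Therefore $A$ acts on $\mb{Q}^2$, and $\rho$ is conjugate into $\SL_2(\mb{Q})$. Concretely, $\mathbb{W}\cong\mathbb{V}\oplus\mathbb{V}$ with $\mathbb{V}$ a rank two $\mb{Q}$-local system satisfying $\mathbb{V}_{\mb{C}}\cong\rho$. Alternatively, $\ker(u-1)$ is a $\mb{Q}$-rational line that produces such a $\mathbb{V}$. The Hodge structure on $\rho$ is then automatically real, being a weight-one structure on $\SL_2(\mb{R})$, and the symplectic form given by the determinant is $\mb{Q}$-rational, so it gives the polarization. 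This shows that $\rho$ itself is a $\mb{Q}$VHS.
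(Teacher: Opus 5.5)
Your construction is correct in substance, and it is the same $\mb{Q}$-local system the paper uses. For any quadratic subfield $F\subset A$ (which splits $A$), the left regular representation of $A$ is $\mathrm{Res}_{F/\mb{Q}}$ of $\rho$ written over $F$. Your unipotent argument ($u-1$ is a nonzero nilpotent, so $A\cong M_2(\mb{Q})$) is exactly the paper's.

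One step is misstated. With $\pi_1$ acting on $A_{\mb R}\cong M_2(\mb R)$ by \emph{left} multiplication, the Hodge structure of $\rho_{\mb R}\otimes\mb R^2$ at a point $\tilde x$ of the universal cover is also \emph{left} multiplication, by the complex structure $J_{\tilde x}\in\SL_2(\mb R)$ of $\rho_{\mb R}$. It transforms by $J_{\gamma\tilde x}=\rho(\gamma)J_{\tilde x}\rho(\gamma)^{-1}$ and does not commute with the monodromy. A complex structure given by right multiplication acts only on the trivial multiplicity factor. It would put $\rho$ in pure type, which cannot be polarized because $\rho$ is not unitary. What right multiplication by $A$ gives is the constant endomorphisms of the VHS.

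Similarly, uniqueness up to shift applies to the irreducible $\rho$, not to $\mathbb W_{\mb C}=\rho\oplus\rho$, which carries many VHS structures. What you actually need is the following. The real structure of $\mathbb W$ has the form $\phi\otimes\psi$ on $\rho\otimes\mb C^2$ with $\phi\colon\overline{\rho}\xrightarrow{\sim}\rho$, and $\phi$ carries the conjugate VHS to the given one. Alternatively, argue directly: for Zariski-dense $\rho_{\mb R}$ the polarization is a real multiple of $i\,\omega(v,\bar w)$, so $V^{0,1}=(V^{1,0})^{\perp}=\overline{V^{1,0}}$.

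The paper avoids this check by choosing the splitting field $F$ imaginary quadratic. Then $\mathbb W_{\mb C}=\rho\oplus\overline{\rho}$, complex conjugation swaps the summands, and $V\oplus\overline{V}$ is tautologically a real VHS. No appeal to $\SU(1,1)\cong\SL_2(\mb R)$ or to rigidity of the Hodge structure is needed; non-unitarity enters only through the non-elementary hypothesis of Maclachlan--Reid. Your route keeps the $\mb Q$-structure intrinsic and makes $A_{\mb R}\cong M_2(\mb R)$ explicit, at the cost of that extra verification.

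Your concern about the polarization is legitimate (the paper does not spell it out either), but it is easily settled. Flat alternating forms on $\mathbb W_{\mb C}$ are exactly $\omega\otimes\beta$ with $\beta$ symmetric, and all of them are of Hodge type. This space is defined over $\mb Q$. The polarizing ones, those with $\beta$ definite of the right sign on the real multiplicity space, form a nonempty open cone and therefore contain rational points. For a suitable pure quaternion $\alpha$, your trace form $\Tr(x\alpha\bar y)$ is such a rational polarization.
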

For a similar argument see also the bottom of page 56 and page 57 of \cite{Simpson} (in particular Lemma 4.8 from \emph{op. cit.} which is credited to M. Larsen).
\begin{proof}
    First,  that the trace field of $\rho$ is $\mb{Q}$ implies that there is a quaternion algebra $B/\mb{Q}$ and a representation $\rho_B: \pi_1(X)\rightarrow B^{\times}$ such that the composition 
    \[
\pi_1(X)\xrightarrow{\rho_B}B^{\times} \to (B \otimes_{\mb{Q}} \mb{C})^{\times} \cong\SL_2(\mb{C})
    \]
    is isomorphic to $\rho$; here the last isomorphism is given by a splitting of $B$ over $\mb{C}$, i.e. an isomorphism $B\otimes \mb{C}\cong M_2(\mb{C})$. Indeed, the existence of  $B$ and $\rho_B$  follows immediately from \cite[Theorem 3.2.1]{MaclachlanReid} and the construction therein of the quaternion algebra; note that to apply Theorem 3.2.1 of \emph{loc. cit.} we need to check that $\rho(\pi_1(X))$ is \emph{non-elementary}, and this follows from our assumption that $\rho$ has Zariski dense image and is non-unitary--see \cite[Proof of Lemma 2.2.5]{2025arXiv250202147F} where this argument is recorded.

    Now, as is well-known, $B$ splits over an imaginary quadratic extension $F/\mb{Q}$, and hence $\rho$ factors as 
\[
\pi_1(X)\xrightarrow{\rho_F}\SL_2(F)\to \SL_2(\mb{C})
    \]
    for a (in fact, either) choice of  embedding of $F\to \mb{C}$. Then $\rho_F$ underlies a $\mb{Q}$VHS of rank four, and $\rho_F\otimes_{\mb Q}\mb{C}=\rho\oplus \overline{\rho}$, as required.
    %(where on $\overline{\rho}$ we take the complex conjugate VHS of that on $\rho$), as required.

    For the last claim in the lemma statement, the existence of a non-trivial unipotent implies that $B$ is already split over $\mb{Q}$, so there is no need to pass to the extension $F$.
\end{proof}

\begin{rmk}
    In the statement of \Cref{lem: trace-field-Q-implies-QVHS}, if one weakens the assumption to that the trace field is a  number field $K$ (instead of $\mb Q$), the same argument shows  that $\rho$ is a complex direct factor of a $K$VHS.
\end{rmk}
\begin{cor}\label{cor: adjoint-tr-field-Q-implies-VHS}
    Let $Y$ be a smooth complex variety and $\rho: \pi_1(Y)\to \SL_2(\mb{C})$ a representation with Zariski dense and non-unitary image. Suppose that the adjoint trace field of $\rho$ is $\mb{Q}$, and that $\rho$ underlies a $\mb{C}$VHS. Then $\rho$ is a complex direct factor of a $\mb Q$VHS. 
\end{cor}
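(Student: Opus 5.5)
Since the adjoint trace field is $\mb{Q}$, the strategy is to pass to a finite-index subgroup on which the full trace field becomes $\mb{Q}$, apply \Cref{lem: trace-field-Q-implies-QVHS} to the corresponding finite étale cover, and then induce back down to $Y$.

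\emph{Step 1: the trace field of $\Gamma^{(2)}$.} Let $\Gamma=\rho(\pi_1(Y))$ and let $\Gamma^{(2)}$ be the subgroup generated by squares. For $\gamma\in \SL_2(\mb{C})$ we have $\Tr\Ad(\gamma)=\Tr(\gamma)^2-1$. We also have $\Tr(\gamma^2)=\Tr(\gamma)^2-2=\Tr\Ad(\gamma)-1\in\mb{Q}$. By the standard argument in \cite[Theorem 3.3.4 / Lemma 3.5.x]{MaclachlanReid}, the trace field of $\Gamma^{(2)}$ coincides with the invariant trace field $\mb{Q}(\Tr^2\Gamma)$. Since $\Tr^2\gamma=\Tr\Ad\gamma+1$, this field equals the adjoint trace field, namely $\mb{Q}$. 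Here we use that $\Gamma$ is non-elementary, which follows from Zariski density and non-unitarity exactly as in the proof of \Cref{lem: trace-field-Q-implies-QVHS}.

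\emph{Step 2: pass to a finite cover.} $\Gamma/\Gamma^{(2)}$ is a finitely generated abelian group of exponent $2$, hence finite. Let $H=\rho^{-1}(\Gamma^{(2)})$; it has finite index in $\pi_1(Y)$ and corresponds to a finite étale cover $f\colon Y'\to Y$, with $Y'$ smooth. The restriction $\rho'=\rho|_H$ has the following properties:
\begin{itemize}
\item its image still has Zariski closure $\SL_2$, since a finite-index subgroup of a Zariski dense subgroup of the connected group $\SL_2$ is Zariski dense;
\item it is still non-unitary, since a finite-index subgroup of $\Gamma$ contained in a compact group would force $\Gamma$ to be relatively compact;
\item it underlies the pulled-back $\mb{C}$VHS;
\item by Step 1 its trace field is $\mb{Q}$.
\end{itemize}
\Cref{lem: trace-field-Q-implies-QVHS} then shows that $\rho'$ is a complex direct factor of a $\mb{Q}$VHS $\mb{W}$ on $Y'$.

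\emph{Step 3: descend to $Y$.} Consider $f_*\mb{W}$, a $\mb{Q}$VHS on $Y$, since pushforward along a finite étale map preserves polarized VHS. Then $f_*\mb{W}\otimes\mb{C}\supset f_*\rho'=\mathrm{Ind}_H^{\pi_1(Y)}\mathrm{Res}\,\rho$, and by Frobenius reciprocity this contains $\rho$ as a direct summand, because $\mathrm{Ind}\,\mathrm{Res}\,\rho\cong\rho\otimes \mb{C}[\pi_1(Y)/H]$ and the trivial representation is a summand of the permutation representation. It remains to check that $\rho$ is a sub-VHS rather than merely a sub-local system. For this we use Deligne's semisimplicity: the isotypic decomposition of a $\mb{C}$VHS is compatible with the Hodge structure up to shifts, or alternatively one argues that $\rho\otimes\mb{C}[\pi_1/H]$ is a tensor product of VHS (the second factor unitary, of weight $0$), and the trivial summand splits off as a sub-VHS.

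\emph{Main obstacle.} The delicate point is Step 1: the equality of the trace field of $\Gamma^{(2)}$ with $\mb{Q}(\Tr\Ad\Gamma)$ uses the non-elementary hypothesis and the Maclachlan--Reid machinery. The rest is formal.
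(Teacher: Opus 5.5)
Your proposal is correct and follows essentially the same route as the paper: pass to $\Gamma^{(2)}$, which has finite index and whose trace field is $\mb{Q}$ by the Maclachlan--Reid identification with the adjoint trace field; apply \Cref{lem: trace-field-Q-implies-QVHS} on the corresponding finite \'etale cover; then push forward to $Y$. Your additional checks fill in details the paper leaves implicit: that Zariski density and non-unitarity survive passage to finite index, and that $\rho$ splits off $f_*f^*\rho\cong\rho\otimes f_*\mb{C}$ compatibly with the Hodge structures.
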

\begin{proof}
    Let $\Gamma = \rho(\pi_1(Y))\subset \SL_2(\mb{C})$, and consider the subgroup $\Gamma^{(2)}:=\langle \gamma^2: \gamma\in \Gamma\rangle$ of $\Gamma$.

    By \cite[Lem. 3.3.3]{MaclachlanReid}, $\Gamma^{(2)}$ has finite index in $\Gamma$, and hence corresponds to a finite \'etale cover $\pi: Y'\to Y$. Note that the trace field of $\Gamma^{(2)}$ is contained in the adjoint trace field of $\Gamma$: this follows from \cite[Lem. 3.5.6]{MaclachlanReid}, and see also the last sentence of \cite[Proof of Lem. 2.2.5]{2025arXiv250202147F} where this simple argument is recorded. 

    The upshot is that, under our assumption that the adjoint trace field of $\Gamma$ is $\mb{Q}$, we may  apply \Cref{lem: trace-field-Q-implies-QVHS} to deduce that $\pi^*\rho$ is a complex direct factor of a $\mb{Q}$VHS, say $\mb{V}$, on $Y'$. Therefore, $\rho$ is a complex direct factor of $\pi_*\mb{V}$, as required.
\end{proof}

Finally, we record, without proof, the following standard statement:
\begin{prop}\label{propnoproof}
    If $\Gamma$ is a group, and $n$ is a positive integer,  every complex direct factor of a representation $\Gamma\to \GL_n(\mb{Z})$ has traces in $\overline{\mb Z}$.
\end{prop}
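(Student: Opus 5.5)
The plan is to show that every trace of a complex direct factor is an eigenvalue sum of an integral matrix, hence an algebraic integer. Let $\sigma:\Gamma\to\GL_n(\Z)$ be the given representation and $V=\C^n$ the associated complex representation $\sigma\otimes_{\Z}\C$. A complex direct factor is a $\Gamma$-stable subspace $W\subset V$ with a $\Gamma$-stable complement $W'$, so $V\cong W\oplus W'$ as $\Gamma$-representations. Writing $\rho_W$ for the action on $W$, we must show that $\Tr\rho_W(\gamma)\in\overline{\Z}$ for every $\gamma\in\Gamma$.

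\textbf{Step 1: the eigenvalues of $\sigma(\gamma)$ are algebraic integers.} The matrix $\sigma(\gamma)$ has integer entries, so its characteristic polynomial $\det(T-\sigma(\gamma))$ is monic with coefficients in $\Z$. Its roots are therefore algebraic integers. Since $\sigma(\gamma)^{-1}=\sigma(\gamma^{-1})$ is also integral, these roots are in fact units, though we will not need this.

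\textbf{Step 2: pass to the factor.} Choose a basis adapted to the decomposition $V=W\oplus W'$. In this basis $\sigma(\gamma)$ becomes block diagonal, $\mathrm{diag}(\rho_W(\gamma),\rho_{W'}(\gamma))$, and so
\[
\det(T-\sigma(\gamma))=\det(T-\rho_W(\gamma))\cdot\det(T-\rho_{W'}(\gamma)).
\]
Thus the eigenvalues of $\rho_W(\gamma)$, counted with multiplicity, form a sub-multiset of the eigenvalues of $\sigma(\gamma)$. Each is an algebraic integer by Step 1.

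\textbf{Step 3: conclude.} The trace $\Tr\rho_W(\gamma)$ is the sum of these eigenvalues, and $\overline{\Z}$ is a ring, so the trace lies in $\overline{\Z}$.

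The proof uses only that $W$ is a $\Gamma$-invariant subspace, since a block upper-triangular form gives the same factorization of characteristic polynomials. So the statement also holds for subquotients, and the direct factor hypothesis is more than we need. No step is a real obstacle. The only point needing care is that "direct factor" refers to the complexification $\sigma\otimes\C$, and not to a $\Z$-lattice decomposition; the argument above is written to work in that setting.
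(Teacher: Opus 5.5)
Your argument is correct and complete. The characteristic polynomial of $\rho_W(\gamma)$ is a monic factor of the integral characteristic polynomial of $\sigma(\gamma)$, so its roots, and hence its trace, are algebraic integers; as you note, this also works for any invariant subspace or subquotient. The paper gives no proof, recording the proposition explicitly as a standard fact, and yours is the standard argument one would supply.
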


\subsection{(Relative) Character varieties and the Teichm\"uller component}\label{teichsec}

Let $\Sigma=\Sigma_{g,n}$ be a connected oriented surface of genus $g$ with
$n$ punctures, and assume $\chi(\Sigma)<0$.  When $n>0$, choose simple loops
$\delta_1,\ldots,\delta_n$ around the punctures, with the usual convention that
they are well-defined up to conjugacy in $\pi_1(\Sigma)$.  For
$G=\SL_2$ or $\PSL_2$, the $G$-character variety of $\Sigma$ is the affine GIT
quotient
\[
  X_G(\Sigma)
  :=
  \operatorname{Hom}(\pi_1(\Sigma),G)// G,
\]
where $G$ acts by conjugation.  In the $\SL_2$ case, fixing boundary traces
$c=(c_1,\ldots,c_n)\in \C^n$ gives the relative character variety
\[
  X_{\SL_2}(\Sigma,c)
  :=
  \{\rho\in X_{\SL_2}(\Sigma):\tr\rho(\delta_i)=c_i
    \text{ for all }i\}.
\]
Equivalently, it is the fiber of the boundary-trace morphism
$X_{\SL_2}(\Sigma)\to\A^n$.  In the $\PSL_2$ case one fixes the conjugacy
classes of the peripheral monodromies instead. After
choosing $\SL_2$ lifts, this corresponds to boundary traces equal to $\pm2$.

The mapping class group $\Mod(\Sigma)$ acts on $\pi_1(\Sigma)$ by outer
automorphisms and hence acts algebraically on the character variety by
precomposition.  If mapping classes are required to preserve the punctures, or
to preserve them individually, then the action preserves the corresponding
relative character variety.

A marked complete hyperbolic structure on $\Sigma$ has holonomy
\[
  \rho:\pi_1(\Sigma)\longrightarrow \PSL_2(\R)\subset \PSL_2(\C),
\]
which is discrete, faithful, and type-preserving: puncture loops are sent to
parabolic elements.  Thus the Teichm\"uller space $\mathcal{T}_{g,n}$ embeds into the real points of the
relative character variety (see e.g. \cite[Part 2]{zbMATH05960418}). We call its image the \emph{Teichm\"uller component}. If one works with $\SL_2$ rather
than $\PSL_2$, a choice of lifts of the peripheral elements selects a lift of
this component; changing lifts changes some trace coordinates by signs, but
does not change the trace ring generated by those coordinates.

Let $K\subset\R$ be a real number field and let $S$ be a finite set of finite
places of $K$, fix a signature $(g;n)$ and boundary data as above.

\begin{defi}\label{teichpoints}
 The set of $\Mod (\Sigma)$ orbits of $\Oo_{K,S}$-points of the relative character variety lying on the Teichm\"uller component is called the \emph{set of $\Oo_{K,S}$-Teichm\"uller points}.
\end{defi}
This is the bridge between the Hodge-theoretic problem and the Diophantine
problem studied below.  A point of the Teichm\"uller component gives the
uniformizing local system of the corresponding Riemann surface, hence a complex
variation of Hodge structure.  The trace ring of this local system is generated
by the traces of the corresponding character.  Therefore constructing
$S$-integral, but not $S'$-integral, uniformizing local systems amounts to
constructing $\mathcal O_{K,S}$-Teichm\"uller points with the prescribed trace
ring.

\begin{rmk}\label{sec:examples} For the four-punctured sphere with all boundary traces equal to $2$, take trace
coordinates
\[
  X=\tr(AB),\qquad Y=\tr(BC),\qquad Z=\tr(CA),
\]
where $A,B,C,D$ are the four boundary monodromies and $ABCD=1$.  The relative
character variety is the Markoff-type cubic
\begin{equation}
\label{eq:V2}
  V_2:\quad
  X^2+Y^2+Z^2+XYZ-8X-8Y-8Z+28=0.
\end{equation}
This equation is the specialization of the so called \emph{Fricke cubic} to
$a=b=c=d=2$. See for example \cite[Eq. 3.2]{zbMATH01310582} and \cite[Sec. 5.2]{zbMATH05560295} for more details. In these coordinates the $\PSL_2$-Teichm\"uller component is the image of the connected component
\[
  \mathcal T_{0,4}
  =
  \{(X,Y,Z)\in V_2(\R): X<-2,\;Y<-2,\;Z<-2\}.
\]
\end{rmk}

\begin{rmk}
The Maskit parametrization we will use in Section~4 gives a rational parametrization of the semi-algebraic component appearing in the previous remark.  The rationality of the cubic
\eqref{eq:V2} as an algebraic surface is weaker than this statement: it
parametrizes the whole surface, whereas the Teichm\"uller component is selected by
real inequalities.
\end{rmk}

\section{Unitary and non-integral local systems}\label{sectioneasy}\label{unitary}

It is easy to construct non-integral unitary $\Q$VHS, as we now explain; the non-unitary case, which occupies the rest of the paper, is considerably more delicate. We first record the following.

\begin{prop}\label{prop: weil-number}
For any prime number  $p$, there exists an algebraic number $\tau$, living inside a quadratic number field $K$, such that 
\begin{itemize}
\item $|\sigma(\tau)|=1$ for each embedding $\sigma: K \hookrightarrow \mb{C}$, and 
\item $\tau$ is not a $p$-integer.
\end{itemize}
\end{prop}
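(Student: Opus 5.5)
We will take $K$ to be an imaginary quadratic field in which $p$ splits, and $\tau=\pi/\bar\pi$ for a suitable element $\pi\in\Oo_K$ of norm a power of $p$. In an imaginary quadratic field the two embeddings $\sigma:K\hookrightarrow\C$ are complex conjugate. Hence $|\sigma(\pi/\bar\pi)|=|\sigma(\pi)|/|\overline{\sigma(\pi)}|=1$ for both embeddings, and the first condition is automatic. The whole point is therefore the second condition, namely that $\tau$ is not $p$-integral.

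For the field we take $K=\Q(\sqrt{-d})$ with $-d$ a nonzero square modulo $p$ (and modulo $8$ if $p=2$). For instance, $K=\Q(\sqrt{1-4p})$ works in general, since $x^2-x+p$ has the root $0$ modulo $p$. For $p=2$ this gives $\Q(\sqrt{-7})$, where $2$ splits. In $K$ we have $p\Oo_K=\mathfrak p\bar{\mathfrak p}$ with $\mathfrak p\neq\bar{\mathfrak p}$.

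Let $h$ be the class number of $K$, and choose a generator $\pi$ of $\mathfrak p^h$. Then $\bar\pi$ generates $\bar{\mathfrak p}^h$, and
\[
v_{\bar{\mathfrak p}}(\tau)=v_{\bar{\mathfrak p}}(\pi)-v_{\bar{\mathfrak p}}(\bar\pi)=0-h=-h<0,
\]
since $\mathfrak p\ne\bar{\mathfrak p}$. So $\tau$ is not integral at the prime $\bar{\mathfrak p}$ above $p$, i.e.\ it is not a $p$-integer. Equivalently, its minimal polynomial $x^2-\frac{\pi^2+\bar\pi^2}{p^h}x+1$ has a non-$p$-integral middle coefficient. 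This uses that $\pi^2+\bar\pi^2\equiv\pi^2\not\equiv0 \pmod{\bar{\mathfrak p}}$.

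With the explicit choice $K=\Q(\sqrt{1-4p})$, we can even bypass class numbers. Take $\pi=\frac{1+\sqrt{1-4p}}{2}$, which has norm $p$ and trace $1$. Then $\tau=\pi/\bar\pi=\pi^2/p$ has trace $(1-2p)/p$, which is visibly not $p$-integral.

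The only step requiring care is ensuring $\mathfrak p\neq\bar{\mathfrak p}$, i.e.\ that $p$ is split rather than ramified or inert; otherwise $\pi/\bar\pi$ would be a unit. The explicit trace computation settles this directly: $\gcd(1-2p,p)=1$.
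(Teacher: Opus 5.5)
Your proof is correct and is essentially the paper's construction. The paper takes $\tau=u^2/p=u/\bar u$, where $u$ is the $p$-unit Frobenius eigenvalue of an ordinary elliptic curve over $\mathbb{F}_p$; your $\pi=\frac{1+\sqrt{1-4p}}{2}$ is exactly such a Weil number (norm $p$, trace $1$ prime to $p$), written down explicitly as the paper remarks one can, and your computation $\operatorname{Tr}(\tau)=(1-2p)/p$ replaces the appeal to ordinarity and the Weil conjectures.
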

The proof is a standard application of the Weil conjectures and the existence of ordinary elliptic curves; one can also write down directly the explicit Weil numbers involved.
\begin{proof}
Let $p$ be a prime number and $E/\mb{F}_p$ an ordinary elliptic curve. Let $\ell$ be an auxiliary prime $\neq p$. Let $u$ be the  eigenvalue of Frobenius acting on $H^1(E\otimes \overline{\mb{F}}_p, \mb{Q}_{\ell})$, and such that $u$ is a $p$-unit: such a $u$ exists by ordinarity. Notice that $u$ is independent of the choice of $\ell$. Then $u$ is a Weil number of weight one: that is, $|\sigma(u)|=p^{1/2}$ for all embeddings $\sigma: \mb{Q}(u)\hookrightarrow \mb{C}$. It then suffices to take $\tau = u^2/p$.

\end{proof}

 Let $X$ be a smooth complex quasi-projective variety which admits  a surjection $\pi: \pi_1(X)\rightarrow \mb{Z}$ (e.g. a curve). For any $y\in \mb{C}^{\times}$, let $\gamma_y: \mb{Z} \rightarrow \mb{C}^{\times}$ be the representation sending the generator $1\in \mb{Z}$ to $y$, and write $\rho_y:= \gamma_y \circ\pi: \pi_1(X)\rightarrow \mb{C}^{\times}$.  

Finally, take a prime number $p>2$ and let $\tau$ be an algebraic number output by \Cref{prop: weil-number} (for the prime $p$), and consider the representation of $\pi_1(X)$ given by 
\[
R := \rho_{\tau}\oplus \rho_{\tau'},
\]
where $\tau'$ is the Galois conjugate of $\tau$.

\begin{prop}
The rank two local system $R$ has a descent which is a $\mb{Q}$-local system, which is unitary and non-integral.  
\end{prop}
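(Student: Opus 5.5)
The plan is to realize $R$ as the complexification of the rank two $\mb{Q}$-local system given by multiplication by $\tau$ on $K$. Since $\tau\in K$ with $[K:\mb{Q}]=2$, multiplication by $\tau$ is a $\mb{Q}$-linear automorphism $m_\tau$ of the two-dimensional $\mb{Q}$-vector space $K$. It is invertible because $\tau\neq 0$ (indeed $|\tau|=1$). Define $R_{\mb{Q}}:\pi_1(X)\to \GL(K)\cong\GL_2(\mb{Q})$ by sending $\gamma\mapsto m_\tau^{\pi(\gamma)}$.

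First I check that $R_{\mb{Q}}\otimes\mb{C}\cong R$. We have $K\otimes_{\mb{Q}}\mb{C}\cong\prod_{\sigma:K\hookrightarrow\mb{C}}\mb{C}$, and under this isomorphism $m_\tau$ acts on the $\sigma$-factor by $\sigma(\tau)$. The two embeddings send $\tau$ to $\tau$ and to $\tau'$, so the complexification is $\rho_\tau\oplus\rho_{\tau'}$. One point needs care: $\tau$ must not be rational, otherwise $\tau'$ is not meaningful. If $\tau\in\mb{Q}$ with $|\tau|=1$, then $\tau=\pm1$, which is a $p$-integer, contradicting \Cref{prop: weil-number}. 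Hence $\tau$ generates $K$ and has a genuine conjugate $\tau'\neq\tau$.

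Unitarity then follows directly. By \Cref{prop: weil-number}, $|\tau|=|\tau'|=1$, so $R$ is a direct sum of two unitary characters and preserves the standard Hermitian form on $\mb{C}^2$. If one also wants a $\mb{Q}$-rational invariant form, one can take $(x,y)\mapsto \mathrm{Tr}_{K/\mb{Q}}(x\bar y)$, where $\bar{\ }$ is the nontrivial automorphism of $K$. This works in the case where $K$ is imaginary quadratic and $\tau\bar\tau=1$, which holds here since $\tau=u^2/p$ with $u\bar u=p$. For the proposition itself the complex statement suffices.

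Non-integrality is the step requiring the most attention, and it follows from \Cref{propnoproof}. Suppose $R_{\mb{Q}}$ were integral, i.e. conjugate to a representation valued in $\GL_2(\mb{Z})$. Then every complex direct factor would have traces in $\overline{\mb{Z}}$. In particular, $\rho_\tau$ would, so $\rho_\tau(\gamma)=\tau$ would be an algebraic integer for any $\gamma$ with $\pi(\gamma)=1$; such $\gamma$ exists because $\pi$ is surjective. This contradicts the fact that $\tau$ is not a $p$-integer. The same argument works directly with the trace: $\mathrm{Tr}\,R(\gamma)=\tau+\tau'\in\mb{Q}$, and being an algebraic integer would force $\tau+\tau'\in\mb{Z}$. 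Since also $\tau\tau'=N(\tau)=1$, $\tau$ would be a root of a monic integer polynomial, which is again a contradiction. So the only genuine input is \Cref{prop: weil-number}, and the argument above also shows that $R$ is not even a direct factor of an integral local system.
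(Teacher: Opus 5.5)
Your proof is correct. It follows the paper's skeleton: descend to $\mathbb{Q}$, get unitarity from the two unitary characters, and get non-integrality from a trace that is not in $\mathbb{Z}$. Two steps are carried out differently.

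\emph{Descent.} The paper only invokes that the traces of $R$ lie in $\mathbb{Q}$. In general that needs an argument, though here it is immediate because the monodromy factors through $\mathbb{Z}$. You instead write the descent down explicitly as multiplication by $\tau$ on the $\mathbb{Q}$-vector space $K$, i.e.\ the restriction of scalars of $\rho_\tau$. You also check that $\tau\notin\mathbb{Q}$, so that $\tau'$ is meaningful, which the paper leaves implicit.

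\emph{Non-integrality.} The paper assumes both $\tau+\tau'$ and $\tau^2+\tau'^2$ lie in $\mathbb{Z}$. It then uses $p>2$ to conclude that $\tau\tau'=\frac{1}{2}((\tau+\tau')^2-(\tau^2+\tau'^2))$ is $p$-integral. It never uses $|\sigma(\tau)|=1$ at this step. You use exactly that property: $N_{K/\mathbb{Q}}(\tau)=\tau\tau'$ is rational of absolute value $1$, hence equal to $\pm1$ (in fact $1$). So the single trace $\tau+\tau'$ lying in $\mathbb{Z}$ already makes $\tau$ an algebraic integer.

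This removes both the hypothesis $p>2$ and the need for a second trace. The paper's non-integrality step, by contrast, uses only that $\tau$ is a quadratic irrationality which is not $p$-integral, at the cost of requiring $p$ odd. Your alternative via \Cref{propnoproof} also gives a slightly stronger conclusion: $R$ is not even a complex direct factor of an integral local system.
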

\begin{proof}
The first claim follows from the fact that the traces of $R$ lie in $\mb Q$. Now, Bb construction, $R$ is a sum of rank one unitary representations, and hence unitary; it remains to show that $R$ is not integral, i.e. that its traces do not lie in $\mb{Z}$. Since $\pi: \pi_1(X)\rightarrow \mb{Z}$ is surjective, it suffices to check that the powers of the matrix

\[
\begin{pmatrix}
\tau & \\ 
& \tau'
\end{pmatrix}
\]
do not all have traces in $\mb{Z}$. Suppose the contrary, so that $\tau+\tau', \tau^2+\tau'^2 \in \mb{Z}$. Since $p>2$, this implies that $\tau+\tau'$ and $\tau \tau'$ are $p$-integral, and therefore that $\tau$ is $p$-integral, since $\tau$ is a root of the polynomial $X^2-(\tau+\tau')X+\tau\tau'$; this contradicts the $p$-non-integrality promised by \Cref{prop: weil-number}.
\end{proof}

\section{Character variety of a 4-punctured sphere: $\Z$-points}\label{sec:04}

In this section, we begin treating the most explicit case of the paper: the $\mathcal{O}_{K,S}$-integral points of the Teichm\"uller component of the character variety of a four-punctured sphere (i.e. the case of signature $(0;4)$), together with their orbits under the mapping class group. That is, with the terminology introduced in \Cref{teichpoints}, the $\Oo_{K,S}$-Teichm\"uller points. After recalling Maskit's parametrization we identify the trace ring with three explicit coordinate, give a new proof of Beauville's classification of the four modular curves of signature $(0;4)$, produce infinitely many $S$-integral solutions of the associated Markoff-type equation.

\subsection{Maskit's parametrization and fundamental domain}\label{funddomainandeq}

Every Fuchsian group of signature $(0;4)$, acting on the upper half-plane $\mathbb{H}$, can be generated by four parabolic transformations $A, B, C, D\in \operatorname{PGL}(2,\R)^+$ with $ABCD = 1$. As in \cite{zbMATH04075492}, we normalize so that $AB$ has its attracting fixed point at $\infty$ and its repelling fixed point at $0$, and so that the fixed point of $C$ is at $1$. A set of generators with the above property is called a \emph{good set of generators}. Let $x$ be the fixed point of $D$ and $y$ the fixed point of $B$; then $x > 1$ and $y<0$. In particular the four punctures carry parabolic (unipotent) local monodromy, and the pair $(x,y)$ records the positions of the two remaining fixed points.

\subsubsection{Parametrization}
Maskit shows that $x$
and $y$ serve as parameters for the deformation space of these groups:

\begin{theor}[{\cite[Thm. 4]{zbMATH04075492}}]\label{maskitunif}
Let $x > 1$, and $y < 0$, be real numbers. Then there
is a unique (and explicit) set of good generators $A, B, C, D$, for a Fuchsian group of
signature $(0;4)$, where $ABCD = 1$, so that $D$ has its fixed point at $x$, $C$
has its fixed point at $1$, and $B$ has its fixed point at $y$.
\end{theor}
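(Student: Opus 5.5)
The plan is to write the generators down explicitly and then verify discreteness with a Poincaré polygon argument. I would work in $\operatorname{PGL}(2,\R)^+$ and fix normalized parabolic matrices. A parabolic element fixing the real point $p$ has the form
\[
P_{p,t}=\begin{pmatrix}1-tp & tp^2\\ -t & 1+tp\end{pmatrix},
\]
and one fixing $\infty$ is the translation by $t$. Set
\[
B=P_{y,\beta},\qquad C=P_{1,\gamma},\qquad D=P_{x,\delta},
\]
and impose two conditions: $AB$ is hyperbolic with attracting fixed point $\infty$ and repelling fixed point $0$, and $A:=(BCD)^{-1}$ is parabolic. Since $AB=(CD)^{-1}$, the first condition says that $CD$ is diagonal, i.e. $CD=\operatorname{diag}(\lambda^{-1},\lambda)$ up to sign. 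Requiring the two off-diagonal entries of $CD$ to vanish gives two equations in $(\gamma,\delta)$. Solving them for $x>1$ should produce unique values with the correct signs, namely those making $C,D$ translate in the direction compatible with orientation. Next, requiring $\operatorname{tr}(A)=\operatorname{tr}(BCD)=\pm 2$, with $CD$ diagonal, gives a linear equation in $\beta$; this fixes $\beta$ uniquely in terms of $x,y,\lambda$. Uniqueness of the good generators then follows because each parameter is forced at each step, up to the overall conjugation by diagonal matrices that the normalization has already killed.

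The main step is to show that $\langle A,B,C,D\rangle$ is Fuchsian of signature $(0;4)$. I would build a candidate fundamental domain from the explicit data. On the imaginary axis, $AB$ acts by dilation, so $C$ and $D$ pair off isometric circles around the fixed points $1$ and $x$, while $B$ and $A$ pair circles around $y$ and around the fixed point of $A$. The fixed point of $A$ must lie in the interval $(y,0)$; checking this from the formulas is a necessary step. Concretely, I would take the polygon bounded by the geodesics joining $\infty\to y$, $y\to a$ (where $a$ is the fixed point of $A$), $a\to 0$ or its image, $0\to 1$, $1\to x$ and $x\to\infty$. This is an ideal polygon with vertices at the four parabolic points and at the points $0$ and $\infty$ of the hyperbolic $AB$. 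I would then set up side pairings, e.g. $D$ maps $[x,\infty]$ to $[1,x]$, and apply Poincaré's polygon theorem. The cycle conditions at the ideal vertices reduce to the fact that the cycle transformations are the parabolics $A,B,C,D$. The quotient is then a sphere with four cusps, and $ABCD=1$ holds by construction.

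I expect the main obstacle to be the sign and inequality bookkeeping. One must show that for every $x>1$ and $y<0$ the sides are arranged in the correct cyclic order and the pairings are orientation-compatible, so that the polygon is genuinely embedded. I would handle this first by computing the images of the relevant endpoints in closed form, checking the inequalities directly, and using continuity in $(x,y)$ on the connected quadrant. Since the statement is quoted from \cite[Thm. 4]{zbMATH04075492}, I could ultimately fall back on Maskit's argument, but this constructive route gives the explicit formulas needed later for the trace coordinates.
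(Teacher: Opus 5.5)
\paragraph{A genuine gap.} Note that the paper gives no proof here: it cites Maskit and writes down the explicit matrices. Your first algebraic step is correct and reproduces those matrices. The off-diagonal conditions on $CD$ force $\gamma=\delta x$, and then $\delta=\frac{x+1}{x(x-1)}$ (the only nontrivial root). So $CD=\operatorname{diag}(-1/x,-x)$, i.e.\ $CD(z)=z/x^2$, exactly as for the paper's $\tilde C\tilde D$.

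The gap starts at $\beta$. Since $\operatorname{tr}(BCD)=-(x+x^{-1})+\beta y(x^{-1}-x)$, the condition $\operatorname{tr}(BCD)=\pm2$ is \emph{two} linear equations, with two roots:
\begin{itemize}
\item $\beta=\frac{x+1}{y(1-x)}$, giving trace $+2$. This is the paper's $\tilde B$, and then $A$ has fixed point $xy$.
\item $\beta=\frac{1-x}{y(1+x)}$, giving trace $-2$. Then $A$ has fixed point $-xy>0$.
\end{itemize}
For the second root, $A$ is a parabolic translating in the opposite sense to $B,C,D$, so $A,B,C,D$ cannot be consecutive peripheral generators of a $(0;4)$ Fuchsian group. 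You have to argue this, for instance: consistently oriented peripheral parabolics are all conjugate in $\mathrm{PSL}_2(\mathbb{R})$ to translations in the same direction. Here this amounts to requiring that the lift $(BCD)^{-1}$ of $A$ have trace $+2$, as in the paper's normalization. As written, ``each parameter is forced'' is false at this step, so your uniqueness argument is incomplete.

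\paragraph{The discreteness step fails as designed.} For the correct root the fixed point of $A$ is $xy<y$, not a point of $(y,0)$. This is as it must be: the axis of $AB$ (the imaginary axis) separates the cusps $xy,y$ of $A,B$ from the cusps $1,x$ of $C,D$, since $AB$ is the separating curve. So your ``necessary check'' would fail.

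More seriously, your hexagon has ideal vertices at $0$ and $\infty$, the fixed points of the hyperbolic element $AB$. Poincar\'e's theorem needs every ideal-vertex cycle transformation to be parabolic. A parabolic fixing $\infty$ together with the hyperbolic $AB$ cannot lie in a discrete group, so this polygon can never be a fundamental domain. This is consistent with area: an ideal hexagon has area $4\pi$, the full area of a $(0;4)$ surface, so all six vertices of a fundamental hexagon must be cusps. Your sample pairing also fails, since $D(\infty)=2x/(x+1)\neq 1$.

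A hexagon that does work has vertices $xy<y<y/x<1<D(xy)<x$, with these side pairings:
\begin{itemize}
\item $B$ pairs $[y,y/x]$ with $[xy,y]$, because $B(y/x)=xy$;
\item $C$ pairs $[1,D(xy)]$ with $[y/x,1]$, because $CD(z)=z/x^2$;
\item $D$ pairs the outer side $[x,xy]$ with $[D(xy),x]$.
\end{itemize}
The vertices $y,1,x$ have cycle transformations $B,C,D$. The vertices $xy,\,y/x,\,D(xy)$ form one cycle with cycle transformation $CDB=B^{-1}A^{-1}B$, which is parabolic. Poincar\'e's theorem then gives discreteness and a four-punctured sphere as quotient. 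Alternatively, you could use Maskit's combination theorem along $\langle AB\rangle$, or check $F_1,F_2,F_3<-2$ and invoke the description of $\mathcal{T}_{0,4}$ recalled in the paper.
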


 Explicitly, given $x,y$ as above, we can take $A, B, C, D \in \operatorname{PGL(2,\R)}^+$ to be the images in $\operatorname{PGL(2,\R)}^+$ of the following matrices (see \cite[p. 254, 256]{zbMATH04075492}):

\[
A =
\begin{pmatrix}
2 x^{2} y & - x^{2} y^{2}(1+x) \\
1 + x & -2 x y
\end{pmatrix},
\qquad
B =
\begin{pmatrix}
-2 x y & y^{2}(1+x) \\
-1 - x & 2 y
\end{pmatrix},
\]

\[
C =
\begin{pmatrix}
2 & -1 - x \\
1 + x & -2x
\end{pmatrix},
\qquad
D =
\begin{pmatrix}
-2x & x^{2}(1+x) \\
-1 - x & 2 x^{2}
\end{pmatrix}.
\]

For our forthcoming applications we wish to work with lifts  $\tilde{A}, \tilde{B}, \tilde{C}, \tilde{D}$ (from $\operatorname{PGL(2,\R)}^+$ to  $\SL_2(\R)^+)$ of  a good set of generators. So we consider each matrix divided by an appropriate choice of square root of its determinant 
\begin{equation}\label{eqn: normalized-(0,4)-matrices}
  \tilde{A}:= A /  xy(x-1), \  \tilde{B}:= B /y(1-x), \  \tilde{C}:= C /(1-x) ,  \ \tilde{D}:= D / x(x-1).  
\end{equation}
It is straightforward to check that $\tilde{A}, \tilde{B}, \tilde{C}, \tilde{D} \in \SL_2(\mb{R})^{+},$ and that $\tilde{A}\tilde{B}\tilde{C}\tilde{D}=\Id.$

\begin{rmk}
These lifts are important since we are looking for a Riemann surface $X=\Gamma\backslash \mathbb{H}$ with $\Gamma \subset \operatorname{SL}_2(\R)^+$ such that its trace ring $\Z[\tr \gamma : \gamma \in \Gamma]$ has certain $S$-integral properties; cf. \Cref{sec:def}. Moreover the choice of lifts is determined by the following condition we wish to enforce: $\Tr\tilde A=\Tr\tilde B=\Tr\tilde C=\Tr\tilde D=2$.
\end{rmk}

Write $(E, F, G) = (\tilde{A}\tilde{B},\ \tilde{A}\tilde{C},\ \tilde{A}\tilde{D})$; a direct computation shows:
\begin{align*}
	\Tr E &= -\frac{x^2+1}{x}  \\
	\Tr F &= -\frac{8 x^2 y - (1+x)^2 \left(x^2 y^2 + 1\right)}{x \, y \, (x-1)^2}\\
	\Tr G&=  \frac{(1+x)^2 \left(1 + y^2\right) - 8 x y}{y \, (x-1)^2}
	\end{align*}

\subsubsection{Fundamental domain}\label{funddomain}
In the second part of the paper, Maskit \cite[Sec. 9-14]{zbMATH04075492} introduces a fundamental domain $\mathcal{D} $ for the action of the Teichm\"{u}ller modular group of signature $(0;4)$. It is given by the following region in the fourth quadrant of $\R^2$, cf. eq. (16), (20), and (21) in \emph{op. cit.}:

\begin{equation}
\begin{aligned}
y &\le -\frac{(x^2 - 4x + 1) - (x - 1)(x^2 - 6x + 1)^{1/2}}{2x}, \\
\text{or} \qquad
y &\ge -\frac{(x^2 - 4x + 1) + (x - 1)(x^2 - 6x + 1)^{1/2}}{2x}.
\end{aligned}
\end{equation}
\begin{equation}
\frac{1}{\sqrt{x}} \le|y| \le \sqrt{x}.
\end{equation}
The interior of the region so described is displayed in \Cref{fig:funddomainold}.
\begin{figure}[ht]
\centering
\includegraphics[width=0.72\textwidth]{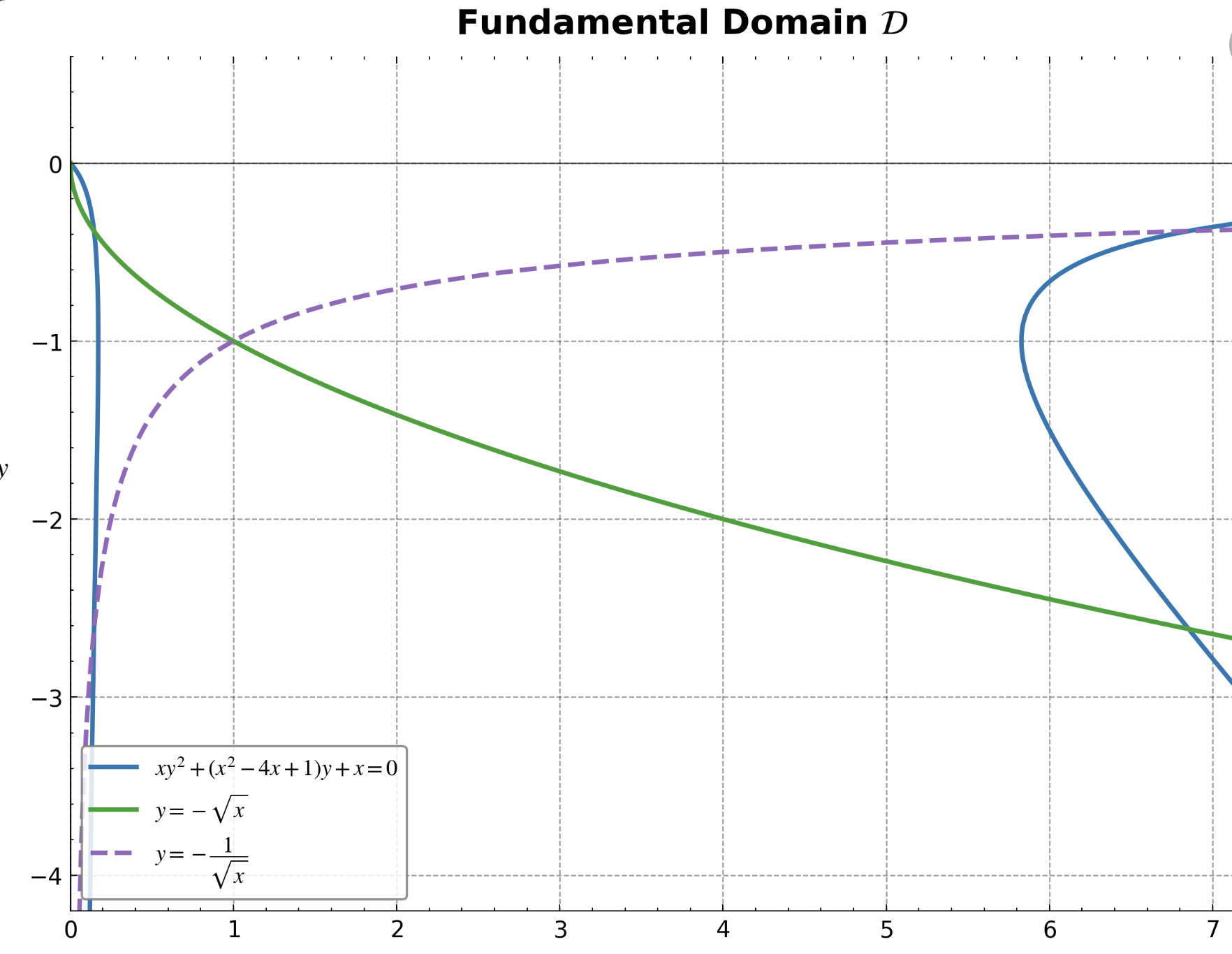}
\caption{The fundamental domain $\mathcal{D}$ --- the interior of the region bounded by the conic $xy^2+(x^2-4x+1)y+x=0$ and the curves $y=-\sqrt{x}$, $y=-1/\sqrt{x}$.}
\label{fig:funddomainold}
\end{figure}

 \begin{rmk}
 The Teichm\"uller group of signature $(0;4)$ is generated by $\alpha$ and $\beta$, where $\alpha (A,B,C,D)= (B,C,D,A)$ has order two, and $\beta(A,B,C,D)= (ABA^{-1},A,C,D) $ has infinite order. In the $\mathcal{D}$-parametrization, they act as described in \cite[page 264]{zbMATH04075492}.
\end{rmk}

We are interested in the values of the following functions at $(x,y)\in \mathcal{D}$, derived from the traces of $(E,F,G)$ from the previous paragraph: 

\begin{flalign*}
&F_1(x) = -\frac{x^2+1}{x}  \\
&F_2(x,y) = -\frac{8 x^2 y - (1+x)^2 \left(x^2 y^2 + 1\right)}{x \, y \, (x-1)^2}\\
&F_3(x,y) =  \frac{(1+x)^2 \left(1 + y^2\right) - 8 x y}{y \, (x-1)^2}
\end{flalign*}

Setting as auxiliary variable
\[
\lambda=x+\tfrac1x,
\]
and using the relations $(x-1)^2=x(\lambda-2)$ and $(1+x)^2=x(\lambda+2)$, we immediately obtain:

\begin{equation}\label{eq:Flambda}
F_1=-\lambda, \qquad F_2=\frac{(\lambda+2)(xy+\tfrac1{xy})-8}{\lambda-2},\qquad
F_3=\frac{(\lambda+2)(y+\tfrac1y)-8}{\lambda-2}.
\end{equation}

\subsection{The trace ring}\label{sec:tracering}

Let $\Gamma$ be a Fuchsian group of signature $(0;4)$ and $X=\Gamma \backslash \mathbb{H}$ be the associated hyperbolic Riemann surface. By definition (cf. \Cref{sec:def}), the integrality of the uniformizing local system attached to $X$ is measured by its trace ring. The next lemma shows that in signature $(0;4)$ this ring is generated by the three coordinates $F_1,F_2,F_3$ associated to $X$ via Maskit's \Cref{maskitunif}. In particular the $\Oo_{K,S}$-Teichm\"uller points correspond to pairs $(x,y)\in \mathcal{D}$ such that $F_1,F_2,F_3 \in \Z[S^{-1}]$ (which in turn give rise to $\Q$VHS via \Cref{lem: trace-field-Q-implies-QVHS}). 

\begin{lem}\label{lem:tracering}
Let $\Gamma=\langle \tilde A,\tilde B,\tilde C,\tilde D\rangle\subset \SL_2(\R)^+$ be generated by the lifts $\tilde A,\tilde B,\tilde C,\tilde D$ as in \eqref{eqn: normalized-(0,4)-matrices}. Then
\[
\Z[\Tr(\gamma):\gamma\in\Gamma]\;=\;\Z[\Tr E,\Tr F,\Tr G],
\]
where, as we recall,   $(E, F, G) = (\tilde{A}\tilde{B},\ \tilde{A}\tilde{C},\ \tilde{A}\tilde{D})$.
\end{lem}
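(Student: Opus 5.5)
The inclusion $\Z[\Tr E,\Tr F,\Tr G]\subseteq \Z[\Tr(\gamma):\gamma\in\Gamma]$ is immediate, since $E,F,G\in\Gamma$. All the work is in the reverse inclusion. The plan is to use the classical theory of traces in $\SL_2$: the trace ring of a finitely generated subgroup $\langle g_1,\dots,g_k\rangle\subset\SL_2$ is generated over $\Z$ by the traces of products $g_{i_1}\cdots g_{i_r}$ with $i_1<\dots<i_r$. Equivalently, it is generated by traces of words of length at most three, thanks to the Fricke--Vogt identity
\[
\Tr(abcd)\text{ expressed via traces of shorter subwords}
\]
and the basic identities
\[
\Tr(ab)+\Tr(ab^{-1})=\Tr a\,\Tr b,\qquad \Tr(a^{-1})=\Tr a.
\]
These follow from Cayley--Hamilton, $a+a^{-1}=(\Tr a)\Id$. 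This is the standard statement behind, e.g., \cite[Lem. 3.5.2]{MaclachlanReid} and the discussion of the Fricke cubic.

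First, since $\tilde D=(\tilde A\tilde B\tilde C)^{-1}$, we may take the generators to be $\tilde A,\tilde B,\tilde C$. The trace ring is then $\Z$-generated by
\[
\Tr\tilde A,\ \Tr\tilde B,\ \Tr\tilde C,\ \Tr(\tilde A\tilde B),\ \Tr(\tilde A\tilde C),\ \Tr(\tilde B\tilde C),\ \Tr(\tilde A\tilde B\tilde C),
\]
together with $\Tr(\tilde A\tilde C\tilde B)$. However, $\Tr(\tilde A\tilde B\tilde C)+\Tr(\tilde A\tilde C\tilde B)$ is a polynomial with integer coefficients in the first six traces, so only one of the two triple traces is needed.

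We now evaluate each generator:
\begin{itemize}
\item $\Tr\tilde A=\Tr\tilde B=\Tr\tilde C=2\in\Z$, by the choice of lifts.
\item $\Tr(\tilde A\tilde B)=\Tr E$ and $\Tr(\tilde A\tilde C)=\Tr F$.
\item $\Tr(\tilde A\tilde B\tilde C)=\Tr(\tilde D^{-1})=\Tr\tilde D=2$.
\end{itemize}
The remaining generator is $\Tr(\tilde B\tilde C)$. Since $\tilde B\tilde C=(\tilde D\tilde A)^{-1}$ and traces are conjugation invariant, we get
\[
\Tr(\tilde B\tilde C)=\Tr(\tilde D\tilde A)=\Tr(\tilde A\tilde D)=\Tr G.
\]
So every generator lies in $\Z[\Tr E,\Tr F,\Tr G]$, which proves the claimed equality.

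The main obstacle is making the reduction step rigorous and self-contained: every word in $\tilde A^{\pm1},\tilde B^{\pm1},\tilde C^{\pm1}$ has trace in the $\Z$-algebra generated by the seven or eight traces above. I would prove this by induction on word length. Inverses are removed using $a^{-1}=(\Tr a)\Id-a$. Repeated letters are collapsed using $\Tr(awaw')=\Tr(aw)\Tr(aw')-\Tr(w^{-1}w')$, a consequence of Cayley--Hamilton applied to $aw$. Finally, words in three distinct letters are reordered up to the single ambiguity $\Tr(abc)$ versus $\Tr(acb)$, which the sum identity handles. Alternatively, I would simply cite this as the standard result on trace rings of three-generated subgroups of $\SL_2$, i.e.\ the coordinate ring of the $\SL_2$-character variety of $F_3$ generated by the seven Fricke traces.
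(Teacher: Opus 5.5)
Your proposal is correct and follows essentially the same route as the paper: you reduce to the free generators $\tilde A,\tilde B,\tilde C$ and invoke Fricke--Vogt trace generation for the rank three free group, eliminating $\Tr(\tilde A\tilde C\tilde B)$ via the sum relation. You then evaluate the seven generators, obtaining $\Tr(\tilde B\tilde C)=\Tr G$ from $\tilde A\tilde B\tilde C\tilde D=\Id$.

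One caveat concerns your opening aside that trace rings are generated by traces of words of length at most three. Over $\Z$ this is only guaranteed after inverting $2$, because the four-letter Fricke identity expresses $2\Tr(abcd)$, not $\Tr(abcd)$, in terms of shorter traces. This does not affect the lemma, since with three generators the increasing products $g_{i_1}\cdots g_{i_r}$ already have length at most three.
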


\begin{proof}

The fundamental group of the four-punctured sphere is freely generated by
$\tilde A,\tilde B,\tilde C$; note that
\[
  \tilde D=(\tilde A\tilde B\tilde C)^{-1}.
\]
We use the integral Fricke--Vogt trace-generation theorem for a free group of
rank three: the $\SL_2$ trace ring of the representation is generated over
$\Z$ by
\[
\Tr\tilde A,\quad \Tr\tilde B,\quad \Tr\tilde C,\quad
\Tr(\tilde A\tilde B),\quad
\Tr(\tilde B\tilde C),\quad
\Tr(\tilde A\tilde C),\quad
\Tr(\tilde A\tilde B\tilde C).
\]
This is the specialization of Goldman's \cite[Sec. 5.1]{zbMATH05560295} description
of the rank three character ring; the eighth trace
$\Tr(\tilde A\tilde C\tilde B)$ is eliminated by the sum relation therein.

We have 
\[
  \Tr\tilde A=\Tr\tilde B=\Tr\tilde C=\Tr\tilde D=2,
\]
and 
\[
  \Tr(\tilde A\tilde B\tilde C)
  =
  \Tr(\tilde D^{-1})
  =
  \Tr(\tilde D)
  =
  2.
\]
The only generators of the trace ring left are therefore
\[
  \Tr(\tilde A\tilde B),\qquad
  \Tr(\tilde A\tilde C),\qquad
  \Tr(\tilde B\tilde C).
\]
By definition $E=\tilde A\tilde B$ and $F=\tilde A\tilde C$. Moreover,
\[
  \Tr G
  =
  \Tr(\tilde A\tilde D)
  =
  \Tr\!\left(\tilde A\tilde C^{-1}\tilde B^{-1}\tilde A^{-1}\right)
  =
  \Tr(\tilde C^{-1}\tilde B^{-1})
  =
  \Tr(\tilde B\tilde C),
\]
where we used cyclicity of trace and $\Tr(M^{-1})=\Tr(M)$ for
$M\in\SL_2$.  Thus the full $\SL_2$ trace ring is
\[
  \Z[\Tr(\gamma):\gamma\in\Gamma]
  =
  \Z[\Tr E,\Tr F,\Tr G].
\]

\end{proof}

\subsection{A new take on Beauville's theorem}\label{sec:beauville}

A smooth hyperbolic curve over $\mb{C}$ (or Riemann surface) is said to admit a  \emph{modular embedding} if its uniformizing local system (which by convention is rank two and has trivial determinant) is of geometric origin--we refer the reader to \cite[Defn 2.3.1]{litt2024motives} for the definition of a local system of geometric origin, as well as further properties and questions concerning them. In this particular setting, it simply means that the local system is the first cohomology of a family of elliptic curves. We recover  the following theorem of Beauville's \cite{zbMATH03794233}. 

\begin{theor} \label{4integersol}
    There are only four  Riemann surfaces of signature $(0;4)$ admitting modular embeddings. They correspond to the following points of $\mathcal{D}$ (in the $x,y$ coordinates introduced in \Cref{funddomain}):
    \begin{displaymath}
         P_1= \left(\frac{3+\sqrt{5}}{2}, -1\right), P_2= \left(2+\sqrt{3}, -1\right),
    \end{displaymath}
    \begin{displaymath}
         P_3=\left(3+\sqrt{8}, -1\right), P_4= \left(\frac{7+\sqrt{45}}{2}, -\sqrt{\frac{7+\sqrt{45}}{2}}\right) .
    \end{displaymath}
\end{theor}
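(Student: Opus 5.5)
The plan is to reduce the statement to a Diophantine problem on the cubic $V_2$ of \Cref{sec:examples}, restricted to the Teichm\"uller component and taken up to the action of the Teichm\"uller modular group.

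\emph{Step 1: reduction to integral traces.} If the uniformizing local system of $X=\Gamma\backslash\mathbb H$ is of geometric origin, then it is a complex direct factor of a $\Z$-local system. By \Cref{propnoproof}, all traces of $\Gamma$ then lie in $\overline{\Z}$. Since $\Gamma$ contains unipotent elements, a rigidity/Galois argument shows the trace field is real, and every Galois conjugate of the uniformizing representation underlies a VHS (the $\Z$VHS splits into conjugates). For a rank-two VHS on a curve, such a conjugate is either unitary or again uniformizing, in which case it is a Teichm\"uller point.

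I would like to conclude that the traces $\Tr E,\Tr F,\Tr G$ are rational integers. The cleanest route uses that the four cusps have unipotent monodromy. In Beauville's setting, the family of elliptic curves has multiplicative reduction at the cusps, so the local system is defined over $\Z$ with rank two, and the traces lie in $\Z$ directly.

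By \Cref{lem:tracering}, the problem therefore reduces to finding the integer points $(X,Y,Z)=(F_1,F_2,F_3)$ of $V_2$ with $X,Y,Z<-2$, up to the mapping class group action. Conversely, each such integer point gives a $\Z$VHS by \Cref{lem: trace-field-Q-implies-QVHS} (the last claim, since there is a unipotent element). By Deligne/Beauville, a rank two $\Z$VHS of this kind comes from a family of elliptic curves, which yields the converse direction.

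\emph{Step 2: reduction to the fundamental domain.} I restrict to $(x,y)\in\mathcal D$. Since $F_1=-\lambda$, the condition $F_1\in\Z$ gives $\lambda=n\ge 3$ (as $x>1$). The inequalities defining $\mathcal D$ amount to a Markoff-type reduction: the point minimizes $|X|$ in its orbit, so Vieta involutions $Y\mapsto -XZ+8-Y$ etc. do not decrease the coordinates.

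Write $\mu=y+1/y\le -2$. Then
\[
F_3=\frac{(n+2)\mu-8}{n-2},
\]
and $F_2$ is given similarly with $xy+1/(xy)$. The constraint $1/\sqrt x\le|y|\le\sqrt x$ bounds $\mu$ in terms of $n$; I would use it to confine $F_3$ (and $F_2$, via $V_2$) to a window of length $O(1)$ around values comparable to $-n$.

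Substituting into $V_2$ with $X=-n$ gives the conic
\[
Y^2+Z^2-nYZ-8Y-8Z+n^2+8n+28=0
\]
in $(Y,Z)$. Combined with the reduced-domain bounds, which say roughly that $|Y|,|Z|\ge n$ and that $YZ$ is close to the extremal value, this should force $n$ to be small. Concretely, I would treat the conic's discriminant $(n^2-4)Y^2+\dots$ and show that, for $n$ large, no integer $Y$ exists in the allowed interval. That leaves the finite list $n\in\{3,4,6,7\}$, which is checked by hand; this matches $\lambda=3,4,6,7$ for $P_1,\dots,P_4$.

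\emph{Step 3: exhaustion of small cases.} For each $n\le N_0$, I solve the conic over $\Z$ within the window, and check that the solutions correspond to the listed $P_i$ and are unique in $\mathcal D$ up to boundary identifications. For example, $y=-1$ gives $\mu=-2$, so $F_3=-2(n+6)/(n-2)$, which is an integer exactly for $n\in\{3,4,6,10,18\}$; the domain inequality then removes the extra values.

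The main obstacle I expect is Step 2: turning Maskit's fundamental-domain inequalities into a sharp enough bound on $n$, together with the integrality of $F_2$. I would first try to express the boundary conic of $\mathcal D$ as the condition $|F_2|\ge|F_3|$ (or $|F_3|\ge|F_1|$), so that the reduced-point theory for Markoff-type cubics yields the usual finiteness bound on $|X|$.
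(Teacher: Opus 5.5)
Your outline (integral traces, then \Cref{lem:tracering}, restriction to $\mathcal D$, a bound on $\lambda$, and a finite check) is the same as the paper's. However, Step~2 never actually bounds $n$, and the estimate it relies on is wrong. The constraint $1/\sqrt x\le|y|\le\sqrt x$ gives $\mu\in[-\sqrt{n+2},-2]$, hence
\[
-\frac{(n+2)^{3/2}+8}{n-2}\;\le\;F_3\;\le\;-\frac{2(n+6)}{n-2}.
\]
So $|F_3|=O(\sqrt n)$: $F_3$ is not confined to a window around values comparable to $-n$.

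The missing step is exactly the identification you only guess at the end. Dividing $xy^2+(x^2-4x+1)y+x\ge 0$ by $xy<0$ turns the conic boundary of $\mathcal D$ into $\mu\le 4-n$, which is precisely $F_3\le F_1=-n$. Combined with $\mu\ge-\sqrt{n+2}$, this gives $n-4\le\sqrt{n+2}$, i.e.\ $(n-2)(n-7)\le 0$, so $n\le 7$. In other words, $\mathcal D$ contains no point with $\lambda\ge 8$ at all, and no discriminant or integrality argument is needed for the bound. This is the paper's argument.

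The list to check is then $n\in\{3,\dots,7\}$, not $\{3,4,6,7\}$. The case $n=5$ has to be ruled out by computation: the only admissible integer is $F_3=-8$, and the conic in $Y$ then has discriminant $140$. Also, your $y=-1$ computation does not reach $P_4$, since $n=7$ forces $\mu=-3$, i.e.\ $y\in\{-\sqrt x,-1/\sqrt x\}$.

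In Step~1, multiplicative reduction plays no role. If you read ``modular embedding'' literally as $\mathbb V\cong R^1f_*\mathbb C$ for a family of elliptic curves, integrality of the traces is immediate. With the general notion of geometric origin used in the paper, \Cref{propnoproof} only gives traces in $\overline{\mathbb Z}$, and you must complete the Galois argument you began:
\begin{itemize}
\item $\mathbb V^\sigma$ is again of geometric origin and has non-trivial unipotent local monodromy, so it is not unitary.
\item Hence its graded parabolic Higgs bundle (with trivial weights) is forced to be $(\mathscr O(1)\oplus\mathscr O(-1),\theta)$, with $\theta$ an isomorphism $\mathscr O(1)\to\mathscr O(-1)\otimes\Omega^1_{\mathbb P^1}(\log D)$.
\item All such Higgs bundles are isomorphic, so $\mathbb V^\sigma\cong\mathbb V$ and the traces lie in $\mathbb Q\cap\overline{\mathbb Z}=\mathbb Z$.
\end{itemize}
Your dichotomy ``unitary or again uniformizing'' stops just short of this uniqueness statement, which is the actual content of the paper's Claim.
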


\begin{rmk}
    We have plotted the four signature $(0;4)$ Riemann surfaces in Teichm\"uller space in \Cref{fig:funddomain}.
\end{rmk}

\begin{figure}[ht]
\centering
\includegraphics[width=0.72\textwidth]{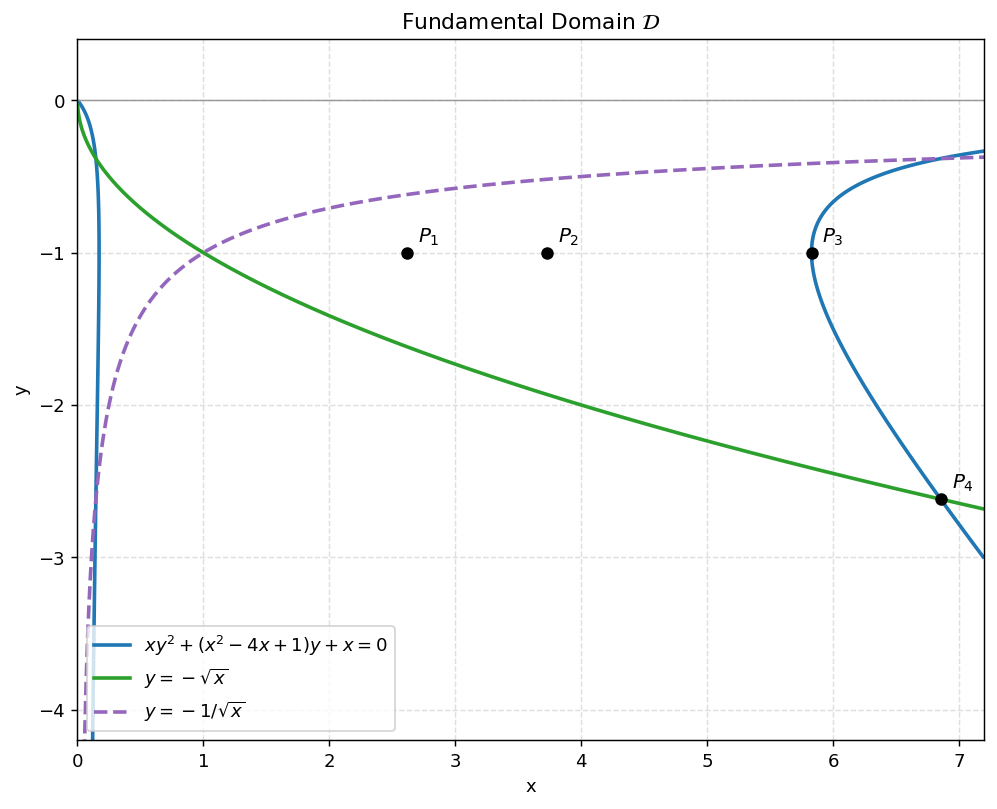}
\caption{The four modular points $P_1,\dots,P_4$ of \Cref{4integersol}.}
\label{fig:funddomain}
\end{figure}

\begin{proof}
Let $X=\mb{P}^1\setminus D$ be a Riemann surface of signature $(0; 4)$. Suppose that $X$ admits a modular embedding, and let $\mb{V}$ be the rank two, trivial determinant, uniformizing local  system on $X$; our assumption is that $\mb{V}$ is of geometric origin. 
\begin{claim}
    $\mb{V}$ has trace ring $\mb Z$.
\end{claim}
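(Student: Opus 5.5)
We want to show that if $\mb{V}$ is of geometric origin, then $\Z[\Tr\gamma : \gamma\in\pi_1(X)]=\Z$. The idea is to squeeze the trace field to $\Q$ and the traces into $\overline{\mb Z}$. In this setting geometric origin means that $\mb{V}$ is $R^1f_*\Z$ for a family of elliptic curves $f:\mathcal{E}\to X$, up to passing to a direct factor. So the monodromy preserves a lattice, i.e. $\mb{V}$ is (a complex direct factor of) a representation $\pi_1(X)\to\GL_n(\Z)$. By \Cref{propnoproof}, every trace $\Tr\rho(\gamma)$ is then an algebraic integer.

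Next we show the traces are rational. In the elliptic curve situation this is immediate, since the local system has rank two and a $\Z$-structure, so $\rho$ is conjugate into $\SL_2(\Z)$. If one only knows that $\mb{V}$ is a direct factor, I would argue with Galois conjugates. For every $\sigma\in\Aut(\C/\Q)$, the conjugate $\rho^\sigma$ is again a direct factor of the same $\Z$-local system. It therefore underlies a VHS, by Deligne semisimplicity together with the fact that factors of a $\Z$VHS with rigid-enough structure remain VHS.

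The key input is that $\rho$ is uniformizing, hence has maximal Higgs field. By the Arakelov/Milnor--Wood equality, $\rho$ is then determined up to complex conjugation among rank two VHS with unipotent boundary monodromy. Any Galois conjugate $\rho^\sigma$ that is a non-unitary VHS with parabolic boundary must satisfy the Arakelov inequality with equality, and hence is again the uniformizing representation, possibly after complex conjugation. The alternative is that it is unitary. Unitarity is impossible here, because unipotent boundary monodromy would then be trivial while $\Tr=2$ at the punctures, and the Fricke relation \eqref{eq:V2} combined with boundedness would force degenerate values. Consequently $\sigma$ fixes the real traces, so all traces lie in $\Q\cap\overline{\mb Z}=\Z$.

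Finally, \Cref{lem:tracering} gives the reverse containment: the trace ring is $\Z[\Tr E,\Tr F,\Tr G]$, which contains $\Z$ trivially and is contained in $\Z$ by the above, so it equals $\Z$.

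The main obstacle is the rationality step when only a direct-factor structure is available, namely excluding Galois conjugates that are non-uniformizing VHS. In the precise setting of a family of elliptic curves this step is avoided, because the $\Z$-lattice of rank two forces $\rho(\pi_1(X))\subset\SL_2(\Z)$ directly. That is the route I would take first.
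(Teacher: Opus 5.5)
Your proposal is correct, but the route you say you would take first is genuinely different from the paper's. If ``modular embedding'' is read literally as $\mb{V}\cong R^1f_*\C$ for a family of elliptic curves $f\colon\mathcal{E}\to X$ (the paper's own gloss), then the rank-two lattice $R^1f_*\Z$, with its unimodular cup-product pairing, puts the monodromy in $\SL_2(\Z)$, and the claim is immediate. You do not even need \Cref{lem:tracering}, since every trace ring contains $\Z$.

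The paper instead argues from the general notion of geometric origin (a complex direct factor of some $R^if_*\C$). There one only knows $\Tr(\gamma)\in\overline{\Z}$, so the real content is rationality. The paper shows $\mb{V}^\sigma\cong\mb{V}$ for every $\sigma\in\Aut(\C)$. Indeed, $\mb{V}^\sigma$ is again of geometric origin, so it corresponds to a graded stable parabolic Higgs bundle with trivial parabolic structure. This bundle is forced to be $(\mathscr{O}(1)\oplus\mathscr{O}(-1),\theta)$ with $\theta$ an isomorphism onto $\mathscr{O}(-1)\otimes\Omega^1_{\mb{P}^1}(\log D)$, and all such bundles are isomorphic. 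Your fallback is exactly this argument in Arakelov language.

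What the paper's route buys is the passage from the general notion to the elliptic-family reading: once traces lie in $\Z$, the unipotents give a $\Q$-form by \Cref{lem: trace-field-Q-implies-QVHS}, and then an invariant lattice. Your shortcut presupposes that equivalence rather than establishing it.

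If you write the fallback up, tighten three steps.
\begin{enumerate}
\item $\mb{V}^\sigma$ is of geometric origin simply because $R^if_*\C=R^if_*\Q\otimes\C$ is stable under $\sigma$. Deligne's semisimplicity then gives the VHS, and no ``rigid-enough'' input is needed.
\item Exclude unitarity directly. The local monodromies of $\mb{V}^\sigma$ are $\sigma$-conjugates of non-trivial unipotents, hence non-trivial unipotents, and no unitary group contains such elements. Your appeal to $\Tr=2$ and the Fricke cubic is not an argument as written, since trace $2$ is consistent with trivial monodromy.
\item Derive Arakelov equality rather than assert it. Write the Higgs bundle as $L\oplus L^{-1}$ with $\theta\neq 0$. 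Then $\deg L\le 1$ because $\deg\Omega^1_{\mb{P}^1}(\log D)=2$, while stability gives $\deg L\ge 1$, since the sub-bundle $L^{-1}$ is $\theta$-invariant.
\end{enumerate}
This degree count is what makes signature $(0;4)$ work. Being uniformizing alone does not force Galois conjugates to be maximal: Teichm\"uller curves give non-maximal, non-unitary conjugates.
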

\begin{proof}[Proof of Claim]
For $\gamma\in \pi_1(X)$, let $\Tr(\gamma)$ denote its trace in the representation of $\pi_1(X)$ given by $\mb{V}$.  Since $\Tr(\gamma)\in \overline{\Z}$ for all $\gamma\in \pi_1(X)$, to prove the claim it suffices to show that $\Tr(\gamma)\in \mb{Q}$. Let $\sigma$ be an arbitrary automorphism of $\mb{C}$, and let $\mb{V}^{\sigma}$ be $\sigma$-conjugate local system of $\mb{V}$: that is $\mb{V}^{\sigma}:= \mb{V}\otimes_{\mb{C}, \sigma}\mb{C}$. It suffices now to prove that $\mb{V}\cong \mb{V}^{\sigma}$. Note that $\mb{V}^{\sigma}$ is also of geometric origin, and so corresponds under the Corlette--Simpson correspondence to a graded, stable, parabolic Higgs bundle on $(\mb{P}^1, D)$; moreover, the parabolic structure at each point in $D$ is trivial since the local monodromies are unipotent. It is now straightforward to check that the graded Higgs bundle corresponding to $\mb{V}^{\sigma}$ must be 
\[
(\mathscr{O}(1)\oplus\mathscr{O}(-1), \theta),
\]
where $\theta$ is the graded Higgs field induced by an isomorphism $\mscr{O}(1)\rightarrow \mscr{O}(-1)\otimes \Omega^1_{\mb{P}^1}(\mathrm{log} D)$. But all such graded parabolic Higgs bundles are isomorphic, and hence $\mb{V}\cong \mb{V}^{\sigma}$, which proves the claim. This argument is recorded in \cite[Proof of Theorem 1.1]{lam2022motivic} and we refer the reader there for more details.
\end{proof}

By \Cref{lem:tracering} we are left to  classify the $(x,y)\in\mathcal{D}$ with $F_1(x,y),F_2(x,y),F_3(x,y)\in\Z$.
We argue using the change of variables described in \eqref{eq:Flambda}: integrality of $F_1$ forces $\lambda=x+ \frac{1}{x}\in\Z$; since $x>1$ we have $\lambda>2$, hence $\lambda\ge3$. We claim that $\lambda\le7$. Indeed, for $(x,y)\in\mathcal{D}$ we have $y\in[-\sqrt x,-1/\sqrt x]$. On this interval the function $y\mapsto u=y+\tfrac1y$ attains its maximum $-2$ at $y=-1$ and its minimum $-(\sqrt x+1/\sqrt x)=-\sqrt{\lambda+2}$ at the two endpoints; thus $u\in[-\sqrt{\lambda+2},-2]$. Moreover the conic boundary condition $xy^2+(x^2-4x+1)y+x\ge0$ defining $\mathcal{D}$ is, after dividing by $-y>0$ and then by $x>0$, equivalent to $-u\ge\lambda-4$, i.e. $u\le 4-\lambda$. Hence a point of $\mathcal{D}$ with the given $\lambda$ exists only if $-\sqrt{\lambda+2}\le\min(-2,\,4-\lambda)$. For $\lambda\ge6$ this reads $\lambda-4\le\sqrt{\lambda+2}$, i.e. $(\lambda-2)(\lambda-7)\le0$, so $\lambda\le7$. Therefore we have to consider integers $\lambda$ such that $3\le\lambda\le7$.

To conclude we perform the finite check. For each $\lambda\in\{3,4,5,6,7\}$ the value of $x$ is determined, and by \eqref{eq:Flambda} the condition $F_3\in\Z$ pins $u$ --- hence $y$ --- to finitely many values in $[-\sqrt{\lambda+2},\min(-2,4-\lambda)]$, after which $F_2\in\Z$ is one further (finite) condition. Carrying this out yields exactly the four points $P_1\ (\lambda=3)$, $P_2\ (\lambda=4)$, $P_3\ (\lambda=6)$ and $P_4\ (\lambda=7)$ of the statement, and no solution for $\lambda=5$; the corresponding trace triples are recorded in \Cref{table: beauville-comparison}, and the points are displayed in \Cref{fig:funddomain}.
\end{proof}

\subsubsection{Comparison with Beauville's approach}\label{sec:beauville-comparison}

Beauville \cite{zbMATH03794233} computed the complete list of stable families of elliptic curves over the projective line which have exactly four singular fibers. More precisely he found 6 isogeny classes and 4 families up to isomorphism and gave equations of the families as well as the number of connected components of the singular fibers.

We can explicitly check that the local systems we obtained in \Cref{4integersol} are the same as those on Beauville's list. To do so, we recall the notation for various congruence subgroups of $\SL_2(\Z)$ (following the notation of \emph{op. cit.}):
\begin{displaymath}
    \Gamma(n)=\left\{
\begin{pmatrix} a & b \\ c & d \end{pmatrix}\in \SL_2(\mathbb{Z})
\;\middle|\; b \equiv c \equiv 0,\; a \equiv 1 \pmod{n}
\right\},
\end{displaymath}
\begin{displaymath}
    \Gamma_0^{0}(n)=\left\{
\begin{pmatrix} a & b \\ c & d \end{pmatrix}\in \SL_2(\mathbb{Z})
\;\middle|\; c \equiv 0,\; a \equiv 1 \pmod{n}
\right\},
\end{displaymath}
\begin{displaymath}
    \Gamma_{0}(n)=\left\{
\begin{pmatrix} a & b \\ c & d \end{pmatrix}\in \SL_2(\mathbb{Z})
\;\middle|\; c \equiv 0 \pmod{n}
\right\}.
\end{displaymath}

\begin{theor}[Beauville]\label{beauv}
    The only subgroups of $\SL_2(\mathbb Z)$ of signature $(0; 4)$ are 
    \begin{displaymath}
    \Gamma^0_0(5),\  \Gamma^0_0(6),\  \Gamma_0^0(4)\cap \Gamma(2),\  \Gamma_0(8)\cap \Gamma_0^0(4), \  \Gamma(3),  \ \Gamma_0(9)\cap \Gamma_0^0(3).
    \end{displaymath}
\end{theor}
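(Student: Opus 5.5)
The plan is to deduce Beauville's list from \Cref{4integersol} by identifying each of the four Teichm\"uller points $P_1,\dots,P_4$ with explicit congruence subgroups of $\SL_2(\Z)$.

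\emph{Step 1: discrete subgroups of signature $(0;4)$ are Teichm\"uller points.} Let $\Gamma\subset\SL_2(\Z)$ have signature $(0;4)$. Its image in $\PSL_2(\R)$ is a Fuchsian group of that signature whose four cusps have parabolic stabilizers, and signature $(0;4)$ excludes elliptic elements. By \Cref{maskitunif} we may conjugate by $\PSL_2(\R)$ so that $\Gamma$ is generated by a good set $A,B,C,D$. Applying the mapping class group, we may further assume the parameter $(x,y)$ lies in $\mathcal{D}$. Conjugation and the choice of generators do not change traces. After fixing the lifts with $\Tr=2$ at the punctures (changing lifts only affects signs, as noted in \Cref{teichsec}), $\Tr E,\Tr F,\Tr G\in\Z$. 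The proof of \Cref{4integersol} then applies verbatim: the $\lambda$-bound and the finite check depend only on integrality of $F_1,F_2,F_3$. Hence $(x,y)\in\{P_1,\dots,P_4\}$. So there are at most four $\PSL_2(\R)$-conjugacy classes of such groups, with trace triples given by \Cref{table: beauville-comparison}.

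\emph{Step 2: the six groups have signature $(0;4)$.} For each of the six listed groups, compute the index in $\SL_2(\Z)$ and the cusp widths. Check that there is no torsion other than $\pm1$ (for instance, $-1\notin\Gamma(3)$ and $-1\notin\Gamma_0^0(n)$ for $n\ge3$). Then Riemann--Hurwitz gives genus $0$ with four cusps; the index is $12$ in $\PSL_2(\Z)$ in every case. This is routine.

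\emph{Step 3: match the groups to the four points.} Compute the triple $(\Tr AB,\Tr AC,\Tr BC)$ for a good set of generators of each group and compare with the table. Commensurable groups that are conjugate in $\GL_2(\Q)^+$ but not in $\SL_2(\Z)$ give the same point. This is why six subgroups correspond to four surfaces: $\Gamma_0^0(4)\cap\Gamma(2)$ and $\Gamma_0(8)\cap\Gamma_0^0(4)$ are conjugate by $\mathrm{diag}(2,1)$, and so are $\Gamma(3)$ and $\Gamma_0(9)\cap\Gamma_0^0(3)$. This uses the values $\lambda=3,4,6,7$, corresponding to cusp widths $5,6,8,9$.

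\emph{The main obstacle.} Completeness inside $\SL_2(\Z)$ needs an extra argument. Step 1 determines each group only up to $\PSL_2(\R)$-conjugacy, while the theorem lists subgroups up to $\SL_2(\Z)$-conjugacy. Two subgroups in the same $\PSL_2(\R)$ class are commensurable and conjugate by an element of the commensurator $\PGL_2(\Q)^+$. I would try to show, through the cusp-width data (the maximal cusp width $\lambda+2$ and the level), that the $\SL_2(\Z)$-classes within a fixed $\PGL_2(\Q)^+$ class are exactly those listed. Concretely, I would enumerate index-12 torsion-free genus-0 subgroups with the given cusp-width multiset via their permutation representations on the 12 cosets. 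If this becomes too heavy, I would instead cite Beauville's enumeration for the final bookkeeping.
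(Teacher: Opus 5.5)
The paper does not prove this statement; it quotes it from Beauville. What the paper proves itself is coarser: \Cref{4integersol} (exactly four signature $(0;4)$ Riemann surfaces, i.e.\ four $\PSL_2(\R)$-conjugacy classes), plus the matching proposition. There Beauville's groups are attached to the four points by reducing traces modulo a prime (every trace in $\Gamma_0^0(5)$ is $\equiv 2 \bmod 5$, which singles out $(-3,-23,-18)$), not by computing good generators. Your Steps 1 and 3 reproduce exactly this. Step 1 even avoids the Higgs-bundle argument, since for $\Gamma\subset\SL_2(\Z)$ integrality of $\Tr E,\Tr F,\Tr G$ is immediate.

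The step you flag yourself is a genuine gap, and nothing in Steps 1--3 can close it. Traces are conjugation invariants, so you only get that every signature $(0;4)$ subgroup of $\SL_2(\Z)$ is conjugate to one of the six listed groups by some $g$ in the commensurator $\mathrm{PGL}_2(\Q)^+$. Whether it is $\SL_2(\Z)$-conjugate to one of them is exactly the extra content of the theorem, and it is not automatic. For instance, $\Gamma(3)$ and $\Gamma_0(9)\cap\Gamma_0^0(3)$ share the triple $(-7,-34,-7)$ but are not $\SL_2(\Z)$-conjugate, since $\Gamma(3)$ is normal. A priori further classes could lie over any of the four points.

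Your proposed invariant does not work. Cusp widths depend on the embedding in $\SL_2(\Z)$, not on $(x,y)$. The claim ``maximal width $=\lambda+2$'' fails for $\Gamma(3)$ (widths $3,3,3,3$, $\lambda=7$) and for $\Gamma_0^0(4)\cap\Gamma(2)$ (widths $2,2,4,4$, $\lambda=6$). Citing Beauville for the bookkeeping is circular.

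You need an actual enumeration. Two routes work:
\begin{enumerate}
\item Commensurator route: write $g=k_1\,\mathrm{diag}(m,1)\,k_2$ with $k_i\in\SL_2(\Z)$. The condition $g\Delta g^{-1}\subset\PSL_2(\Z)$ becomes $k_2\Delta k_2^{-1}\subset\Gamma_0(m)$. For $\Delta$ of level $N$ this forces $m\mid N$, leaving a finite check.
\item Simpler, direct route: classify torsion-free index-$12$ genus-$0$ subgroups of $\PSL_2(\Z)\cong\Z/2*\Z/3$ as connected trivalent planar maps with $4$ vertices, $6$ edges and $4$ faces, where face degrees are the cusp widths. There are exactly six, with face degrees $(3,3,3,3)$, $(2,2,4,4)$, $(1,2,3,6)$, $(1,1,5,5)$, $(1,1,2,8)$, $(1,1,1,9)$, realized by the six listed groups.
\end{enumerate}
In either case you must also pin down the lift to $\SL_2(\Z)$. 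Since $\pi_1$ is free on three peripheral loops, requiring all peripheral traces to be $2$ leaves at most one lift of a given subgroup of $\PSL_2(\Z)$, and one checks it is the listed group. The map enumeration proves the theorem on its own; your Steps 1 and 3 then only recover the four surfaces, as the paper does.
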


\begin{table}[h]
\centering
\begin{tabular}{|c|c|c|c}
\hline
$(x,y)$-coordinates & $(\Tr E, \Tr F, \Tr G)$ & Beauville family   \\ 
\hline
$P_1=(\frac{3+\sqrt{5}}{2}, -1)$ & $(-3, -23, -18)$ & $\Gamma^0_0(5)$   \\
$P_2=(2+\sqrt{3}, -1)$ &  $(-4, -16, -10)$ & $\Gamma^0_0(6)$ \\ 
$P_3=(3+\sqrt{8}, -1)$ & $(-6, -14, -6)$ & $\Gamma_0^0(4)\cap \Gamma(2)$, $\Gamma_0(8)\cap \Gamma_0^0(4)$  \\ 
$P_4=(\frac{7+\sqrt{45}}{2}, -\sqrt{\frac{7+\sqrt{45}}{2}})$ & $(-7, -34, -7)$& $\Gamma(3), \Gamma_0(9)\cap \Gamma_0^0(3)$  \\ 
\hline
\end{tabular}

\caption{Matching between our list and Beauville's}
\label{table: beauville-comparison}
\end{table}

\begin{prop}
    The list of \Cref{beauv} corresponds to the one of \Cref{4integersol}, as displayed in \Cref{table: beauville-comparison}.
\end{prop}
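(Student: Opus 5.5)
The plan is to match the six groups of \Cref{beauv} with the four points of \Cref{4integersol} via a conjugation-invariant that is cheap to compute on both sides: the triple of traces $(\Tr E,\Tr F,\Tr G)=(\Tr \tilde A\tilde B,\Tr \tilde A\tilde C,\Tr\tilde B\tilde C)$ of a good set of generators. By \Cref{lem:tracering} and the Fricke--Vogt description used there, a type-preserving representation of the free group on $\tilde A,\tilde B,\tilde C$ with all boundary traces $2$ is determined up to conjugation by this triple, which lies on the cubic $V_2$ of \eqref{eq:V2}. Two Fuchsian groups of signature $(0;4)$ therefore give the same Riemann surface as soon as some choice of good generators in each yields the same triple, up to the action of the Teichm\"uller group (generated by $\alpha,\beta$).

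For each group $\Gamma$ in Beauville's list I will write down explicit parabolic generators. Concretely, I take the four cusps of $\Gamma\backslash\mb H$ and generators of their stabilizers in $\SL_2(\Z)$, with the signs chosen so that all traces are $+2$. I then order them so that $ABCD=1$; this is a finite matrix check, using the standard cusp representatives (e.g.\ $\infty,0,1/2,2/5$ for $\Gamma_0^0(5)$, and similar lists for the others). From these I compute the integer triple $(\Tr AB,\Tr AC,\Tr BC)$. It has to be an integral point of $V_2$ in the region $X,Y,Z<-2$. If the raw computation gives positive traces, I flip the sign of a lift and adjust the ordering accordingly.

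On the side of \Cref{4integersol}, I evaluate $F_1,F_2,F_3$ at $P_1,\dots,P_4$ using \eqref{eq:Flambda}, where $\lambda=3,4,6,7$. For $P_1,P_2,P_3$ we have $y=-1$, so $y+1/y=-2$ and $xy+1/(xy)=-\lambda$. For $P_4$ we have $y=-\sqrt x$, so $y+1/y=-\sqrt{\lambda+2}=-3$ and $xy+1/(xy)=-(x^{3/2}+x^{-3/2})=-(3^3-3\cdot 3)=-18$. This produces the second column of \Cref{table: beauville-comparison}. I will double check that each triple satisfies \eqref{eq:V2}.

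The main obstacle is the comparison step. The triple computed from Beauville's generators will generally not equal the table entry on the nose; it will be a different point of the same $\Mod$-orbit on $V_2(\Z)$. To bring it to the normalized representative, I apply Vieta-type moves $X\mapsto XY\!\cdot\!(-1)\dots$ — more precisely the involutions $X\mapsto 8-YZ-X$ (and cyclic permutations) coming from $\beta$ — together with the permutations coming from $\alpha$. The reduction descends by maximizing $|X|+|Y|+|Z|$ until a minimal triple is reached, and I would match these minimal triples. This also explains the two-to-one pairs in the table: $\Gamma_0^0(4)\cap\Gamma(2)$ and $\Gamma_0(8)\cap\Gamma_0^0(4)$ are conjugate in $\SL_2(\R)$, via $\tau\mapsto2\tau$ up to normalization, hence they give the same triple, and likewise for $\Gamma(3)$ and $\Gamma_0(9)\cap\Gamma_0^0(3)$ via $\tau\mapsto 3\tau$. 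Since \Cref{4integersol} already shows that there are at most four such surfaces, it suffices to see that the four resulting orbits are distinct; this follows because $\Tr E=-\lambda$ distinguishes them, as the minimal $|X|$ in each orbit differs.
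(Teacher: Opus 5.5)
Your proposal is sound in outline, but it takes a genuinely different and much more computational route than the paper.

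\textbf{The paper's route.} The paper never writes down generators of Beauville's groups and never reduces trace triples. It notes that every element of $\Gamma_0^0(5)$ has trace $\equiv 2 \pmod 5$, with similar congruences for the other groups. Exactly one triple in the table satisfies each congruence. It then concludes by elimination, using \Cref{4integersol} to know that each of Beauville's surfaces is one of $P_1,\dots,P_4$. A congruence on \emph{all} traces of the group is invariant under conjugation and change of marking, so no normal form for mapping class group orbits is needed.

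\textbf{What your route buys and costs.} Your approach verifies each row directly, as the paper does for $P_3$ in the remark after the proof. It does not depend on the completeness part of \Cref{4integersol}. Your values of $F_1,F_2,F_3$ at the $P_i$ are correct. Your conjugations by $\tau\mapsto 2\tau$ and $\tau\mapsto 3\tau$ are also correct: they give inclusions, and equality follows because both groups have signature $(0;4)$, hence equal covolume. The price is a laborious computation for six groups, and three points you gloss over.

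\textbf{First: generators.} $ABCD=1$ alone does not make the tuple meaningful. You must check that $A,B,C$ generate $\Gamma$ and that $A,B,C,D$ are primitive parabolics in the four distinct cusp classes, for instance by reading them off the side pairings of a fundamental domain. Only then does your tuple differ from a good set of generators by a mapping class.

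\textbf{Second: the moves are misattributed.} With $X=\Tr AB$, $Y=\Tr BC$, $Z=\Tr CA$ one finds $\alpha\colon (X,Y,Z)\mapsto (Y,X,8-XY-Z)$ and $\beta\colon (X,Y,Z)\mapsto (X,Z,8-XZ-Y)$. So a single flip or a single transposition is an orientation-reversing change of marking. Reducing with them therefore identifies surfaces only up to complex conjugation. This is harmless here, because all six groups are normalized by $\mathrm{diag}(1,-1)$, so each quotient is isomorphic to its conjugate.

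\textbf{Third: the orbit minimum is asserted, not proved.} Your closing claim is that the minimum of $|X|$ over the orbit of $P_i$ equals $\lambda_i$. A descent does not give this, since it only reaches a local minimum. The claim is true, and the quickest proof is exactly the paper's idea. The flip $X\mapsto 8-YZ-X$ preserves the property that all three coordinates are $\equiv 2 \pmod n$, for every $n$. The tabulated triples have this property for $n=5,6,8,9$ respectively. Hence no coordinate in these orbits has absolute value below $3,4,6,7$ respectively.

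In any case, distinctness of the four orbits is not needed for the matching: an explicit chain of moves from each computed triple to the tabulated one suffices. Note also that $(-7,-34,-7)$ is not itself reduced, since flipping the middle coordinate gives $(-7,-7,-7)$. You must therefore reduce both sides before comparing.
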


\begin{proof}
     Elements in $\Gamma^0_0(5)$ have traces which are $2 
\ \bmod 5$; the only tuple $(\Tr E, \Tr F, \Tr G)$ with this property is $(-3, -23, -18)$, and this gives the matching in the first row. In fact, computing the traces (modulo a suitable prime) of elements of $\Gamma_0^0(6), \Gamma_0^0(4) \cap \Gamma(2)$, and $\Gamma(3)$ allows us to match each item in Beauville's list to one on our list.
\end{proof}

\begin{rmk}
We can be even more explicit for $P_3=(x, y)=(3+\sqrt{8}, -1)$; for completeness we record here the argument. We must check that the  local system specified by this choice of $(x,y)$ agrees with the local system for $\Gamma_0(8)\cap \Gamma_0^0(4)$, which is on Beauville's list. To check that these two local systems are isomorphic, it suffices to compare the traces of $(E, F, G)$ to the corresponding matrices of the $\Gamma_0(8)\cap \Gamma_0^0(4)$-local system; we will now do so. One can check that the local system on $\mb{P}^1\setminus \{0, \pm 1, \infty\}$ given by the group $\Gamma_0(8)\cap \Gamma_0^0(4)$ has monodromy tuple (see \cite[Eqn (C.1)]{collas2018monodromy})
\[
(M_{-1},\, M_{1},\, M_{0},\, M_{\infty})
=
\left(
\begin{pmatrix}
1 & 0\\
2 & 1
\end{pmatrix},
\begin{pmatrix}
-19 & -8\\
50 & 21
\end{pmatrix},
\begin{pmatrix}
-7 & -4\\
16 & 9
\end{pmatrix},
\begin{pmatrix}
-3 & -4\\
4 & 5
\end{pmatrix}
\right).
\]
One checks straightforwardly that 
\[
\operatorname{tr}(M_{-1}M_{0}, M_{-1}M_{1},  M_{-1}M_{\infty})=(-6, -14, -6),\]
agreeing with the traces given in \autoref{table: beauville-comparison}.
\end{rmk}

\section{Character variety of a 4-punctured sphere: $\mathcal{O}_{K,S}$-points}\label{sec:dichotomy}

 We now explore whether the set of $\Oo_{K,S}$-Teichm\"uller points of the relative character variety $V_2$ introduced in \Cref{sec:examples} is finite or not. We have already seen that there are only four $\Z$-Teichm\"uller points, up to the action of the mapping class group.

 In our Teichm\"uller component there is explicit mechanism to construct $\Oo_{K,S}$-points (but more care is needed to control their ring of definition): they can be constructed from an $S$-unit of infinite order. Recall that, by Dirichlet's $S$-unit theorem, $\rk\mathcal{O}_{K,S}^\times=r_1+r_2+|S|-1$, where $r_1$ and $r_2$ are the numbers of real and complex places of $K$. Since all fields we consider have a real embedding, $\rk\mathcal{O}_{K,S}^\times\ge1$ as long as $(K,S)\neq (\Q,\emptyset)$. The first main result of the section is the following:

\begin{theor}\label{conj:dichotomy}
Let $K\subset\R$ be a number field and $S$ a finite set of finite places of $K$. The set of $\Oo_{K,S}$-Teichm\"uller points with trace field exactly $K$ is finite up to the mapping class group action  if and only if $(K,S)=(\mb Q, \emptyset)$.

\end{theor}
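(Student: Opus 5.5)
The proof splits into the two directions of the equivalence. The direction ``$(K,S)=(\Q,\emptyset)$ implies finite'' is essentially \Cref{4integersol}. By \Cref{lem:tracering}, a $\Z$-Teichm\"uller point is a point $(x,y)$ of the Teichm\"uller component, which we may move into Maskit's fundamental domain $\mathcal{D}$ using the mapping class group, such that $F_1,F_2,F_3\in\Z$. The bound $3\le\lambda\le 7$ from the proof of \Cref{4integersol}, together with the finite check carried out there, leaves only the four orbits $P_1,\dots,P_4$. That argument never used the geometric-origin hypothesis beyond integrality of the traces, so it applies verbatim.

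For the converse, assume $(K,S)\neq(\Q,\emptyset)$. Dirichlet's $S$-unit theorem then gives a unit $\varepsilon\in\mathcal{O}_{K,S}^\times$ of infinite order. Replacing $\varepsilon$ by $\pm\varepsilon^{\pm1}$, we may take $\varepsilon>1$ under the fixed real embedding. I would use the explicit form \eqref{eq:Flambda} with the choice
\[
x=\varepsilon^{n},\qquad \lambda=\varepsilon^n+\varepsilon^{-n}\in\mathcal{O}_{K,S}.
\]
It remains to choose $y$ so that $F_2,F_3\in\mathcal{O}_{K,S}$. The natural ansatz is $y=-1$, which gives $F_3=(-2(\lambda+2)-8)/(\lambda-2)$ and $F_2=-(\lambda+2)(\lambda)+\dots$; both have denominator $\lambda-2=(x-1)^2/x$. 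So the key requirement is that $x-1$ be an $S$-unit. I would therefore look for $S$-units $x$ with $x-1$ also an $S$-unit, i.e.\ solutions of the $S$-unit equation. There are only finitely many such solutions, so this alone does not yield infinitely many points.

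A better route is to exploit the extra freedom in $y$. Write $u=y+1/y$ and note that
\[
F_3=\frac{(\lambda+2)u-8}{\lambda-2}=u+\frac{4(u-2)}{\lambda-2}.
\]
Choose $u=2+(\lambda-2)w$ with $w\in\mathcal{O}_{K,S}$. Then $F_3=u+4w$ is integral. Similarly $F_2$ involves $v=xy+1/(xy)$, and one needs $v\equiv 2 \pmod{\lambda-2}$ as well. Since $x$ and $y$ are linked through $v$, I would instead impose the constraint directly in the original trace coordinates. A cleaner way is to pass to the Markoff-type cubic \eqref{eq:V2} and produce infinitely many points there. Take $X=-\lambda$ with $\lambda=\varepsilon^n+\varepsilon^{-n}$. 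The fibre $X=\text{const}$ is a conic in $(Y,Z)$, namely
\[
Y^2+Z^2+XYZ-8Y-8Z+(X^2-8X+28)=0.
\]
On this conic the Dehn twist about the curve with trace $X$ acts by a Vieta-type map with integral coefficients. So one rational integral point generates infinitely many integral points, but these all lie in a single mapping class group orbit. To get infinitely many orbits, I would instead vary $n$: the value $-X=\lambda$ is the trace of a simple closed curve, and I would take it to be the systole of the resulting surface. Distinct systole lengths then force distinct orbits, because the mapping class group preserves the length spectrum. Concretely, I would exhibit for each $n$ an $\mathcal{O}_{K,S}$-point $(X,Y,Z)$ with $X=-\lambda_n$ and $Y,Z<-2$. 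A natural candidate is the symmetric point $Y=Z$, which reduces the conic to a quadratic in $Y$. Its discriminant, $(X^2-8X+28)$ modified by $X$, has to be made a square in $K$; here I would try substituting $X=-(t+t^{-1})$ with $t$ an $S$-unit and checking that the discriminant factors as a square times a unit.

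The last step is to check that the trace field is exactly $K$. This should follow because $\Q(\lambda_n)=K$ for all $n$ outside a finite set of exceptions, as $\varepsilon$ generates a field whose degree can be controlled; if necessary, I would replace $\varepsilon$ by a generator of $K$ times a unit. I expect the main obstacle to be arranging simultaneously that $Y$ and $Z$ are $S$-integral and that the point lies in the component $Y,Z<-2$. The sign condition should hold for large $n$, since then $\lambda_n\to\infty$ and the real branch satisfies $Y,Z\approx -$ something growing; the integrality requires the explicit factorisation of the discriminant.
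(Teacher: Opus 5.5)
Your argument for $(K,S)=(\Q,\emptyset)$ is fine and is what the paper does. The converse, however, is never actually carried out, and the one concrete construction you propose fails. On the symmetric slice $Y=Z$ over $X=-\lambda$ the equation becomes $(X+2)Y^2-16Y+(X^2-8X+28)=0$, whose discriminant is $4(\lambda+2)^3$. Writing $a=\sqrt{\lambda+2}>2$, the root with $Y<-2$ is $Y=-a-4/(a-2)$.

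Now take $\lambda=\varepsilon^n+\varepsilon^{-n}$. For $a$ to lie in $K$, $\lambda+2$ must be a square in $K$, which forces $\varepsilon^n=s^2$ with $s>0$ an $S$-unit. Then $a=s+s^{-1}$ and $4/(a-2)=4s/(s-1)^2$. So $Y\in\mathcal{O}_{K,S}$ forces $(s-1)^2\mid 4$ in $\mathcal{O}_{K,S}$, i.e.\ $s-1$ is an $S'$-unit for $S'=S\cup\{v\mid 2\}$. By the $S$-unit equation this happens for only finitely many $n$, which is the same obstruction you met in your first attempt.

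The root cause is visible in \eqref{eq:Flambda}: every denominator is a power of $\lambda-2=(x-1)^2/x$. Choosing $x$ (or $t$ in $\lambda=t+t^{-1}$) to be a unit therefore leaves $\lambda-2$ a non-unit. The missing idea is to make $\lambda-2$ itself the $S$-unit. Put $\lambda_b=2+\varepsilon^{-b}$ with $\varepsilon>1$, keep $y=-1$, and let $x_b$ be the larger root of $x^2-\lambda_b x+1$. This $x_b$ need not lie in $K$, since only the traces matter. Then $F_1=-2-\varepsilon^{-b}$, $F_3=-2-16\varepsilon^{b}$ and $F_2=-6-\varepsilon^{-b}-16\varepsilon^{b}$ all lie in $\mathcal{O}_{K,S}$.

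This choice also repairs your two remaining steps, both of which fail as stated.

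\emph{Orbit separation.} Since $\lambda_b\in(2,3)$, the point $(x_b,-1)$ lies in the interior of Maskit's fundamental domain $\mathcal{D}$, so distinct $b$ give distinct orbits. Your systole argument cannot work in your regime. The proof of \Cref{4integersol} shows that $-\Tr(\tilde A\tilde B)=\lambda\le 7$ on $\mathcal{D}$. Hence every signature $(0;4)$ surface has a simple closed geodesic with $|\Tr|\le 7$, and a curve of trace $-\lambda_n$ is not the systole once $\lambda_n>7$.

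\emph{Trace field.} Your claim that $\Q(\varepsilon^n+\varepsilon^{-n})=K$ for almost all $n$ is false in general. For $K=\Q(\sqrt3)$ and $S=\emptyset$, every unit has norm $1$, so $\varepsilon^n+\varepsilon^{-n}=\Tr_{K/\Q}(\varepsilon^n)\in\Z$ for all $n$. The paper instead first picks an $S$-unit $\varepsilon$ with $\Q(\varepsilon)=K$. This is possible because each proper subfield $F$ has strictly smaller $S$-unit rank (as $K$ is real, $K/F$ is not CM), and a finite union of lower-rank subgroups cannot cover $\mathcal{O}_{K,S}^\times$. It then takes $b$ coprime to the number of roots of unity in a normal closure, which gives $\Q(F_1,F_2,F_3)=\Q(\varepsilon^{-b})=K$ for infinitely many $b$.
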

Thanks to \Cref{lem:tracering}, the above theorem can be equivalently stated as follows: the set $(x,y)\in \mc{D}$ such that  the trace triples $(F_1,F_2,F_3)\in\mathcal{O}_{K,S}^3$, whose generated field is $K$,  is finite if and only if $(K,S)=(\mb Q, \emptyset)$. We start with a simple proposition that summarizes the previous computations and the relations of Maskit:
\begin{prop}\label{prop:unitfamily}
Suppose $\rk\mathcal{O}_{K,S}^\times\ge1$ and let $\epsilon \in\mathcal{O}_{K,S}^\times$ be a unit of infinite order with $\epsilon>1$ in the real place $K\subset\R$. For $b\ge1$ put  $\lambda_b=2+\epsilon^{-b}$, and $x_b$ to be the bigger (and necessarily real) solution of $x_b+1/x_b=\lambda_b$. Then $(x,y)= (x_b,-1)\in\mathcal{D}$ and $F_1=F_1(x,y), F_2=F_2(x,y), F_3=F_3(x,y)$ are given by 
\[
\{F_1=-2-\epsilon^{-b},\qquad F_2=-16\,\epsilon^{\,b}-6-\epsilon^{-b}, \qquad  F_3=-16\,\epsilon^{\,b}-2 \} \subset \mathcal{O}_{K,S}.
\]
Indeed the $\lambda_b$ are pairwise distinct, and the corresponding points lie in distinct orbits. Hence there are infinitely many $\Oo_{K,S}$-Teichm\"uller points. Moreover their trace rings and trace fields are
\[
\Z[F_1,F_2,F_3]=\Z[\epsilon^{-b},16\epsilon^b],
\qquad
\Q(F_1,F_2,F_3)=\Q(\epsilon^{-b}).
\]
\end{prop}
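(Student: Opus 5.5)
The plan is to verify every assertion of \Cref{prop:unitfamily} directly from the $\lambda$-form \eqref{eq:Flambda} of the trace functions, from the description of $\mathcal D$ in \Cref{funddomain}, and from \Cref{lem:tracering}. Put $t=\epsilon^{-b}$. Since $\epsilon>1$ in the given real place, we have $0<t<1$, so $\lambda_b=2+t\in(2,3)$. The equation $x+1/x=\lambda_b$ then has two distinct positive real roots with product $1$, and we take the larger one, $x_b>1$. The values $\lambda_b$ are pairwise distinct because $\epsilon$ has infinite order and $\epsilon>1$, so $b\mapsto\epsilon^{-b}$ is strictly decreasing. Consequently the $x_b$ are pairwise distinct as well.

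\emph{Membership in $\mathcal D$.} With $y=-1$ we have $1/\sqrt{x_b}<1<\sqrt{x_b}$, so the strict inequalities $1/\sqrt x<|y|<\sqrt x$ hold. For the conic condition I reuse the reformulation from the proof of \Cref{4integersol}: it is equivalent to $u\le 4-\lambda$, where $u=y+1/y$. Here $u=-2$, so the condition reads $\lambda_b\le 6$, and it holds strictly because $\lambda_b<3$. Hence every $(x_b,-1)$ lies in the interior of $\mathcal D$.

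\emph{The trace values.} Substitute $y=-1$ into \eqref{eq:Flambda}. Then $y+1/y=-2$ and $xy+1/(xy)=-\lambda$. This gives $F_1=-\lambda=-2-t$. Next, $F_3=\bigl(-2(\lambda+2)-8\bigr)/t=-(2t+16)/t=-16\epsilon^b-2$. Finally,
\[
F_2=-\frac{(\lambda+2)\lambda+8}{\lambda-2}=-\frac{(4+t)(2+t)+8}{t}=-\frac{16}{t}-6-t,
\]
that is, $F_2=-16\epsilon^b-6-\epsilon^{-b}$. All three values lie in $\mathcal O_{K,S}$, since $\epsilon^{\pm b}\in\mathcal O_{K,S}$.

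\emph{Trace ring and trace field.} By \Cref{lem:tracering}, the trace ring equals $\Z[F_1,F_2,F_3]$. From $F_1$ we recover $\epsilon^{-b}$, and from $F_3$ we recover $16\epsilon^b$; conversely $F_2=-16\epsilon^b-6-\epsilon^{-b}$ is a polynomial in these two. Therefore $\Z[F_1,F_2,F_3]=\Z[\epsilon^{-b},16\epsilon^b]$. Passing to fraction fields, $16\epsilon^b=16(\epsilon^{-b})^{-1}$, so the trace field is $\Q(\epsilon^{-b})$.

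\emph{Distinct orbits.} This is the step that needs the most care. My first argument: $\mathcal D$ is Maskit's fundamental domain for the Teichm\"uller modular group, so distinct points of its interior lie in distinct orbits. The points $(x_b,-1)$ are interior and pairwise distinct, so they represent pairwise distinct orbits, i.e.\ infinitely many $\Oo_{K,S}$-Teichm\"uller points. The risk is whether Maskit's domain is a strict fundamental domain or whether interior points can still be identified, for instance by the finite-order element $\alpha$. As a backup I would use the invariant $\lambda=-\Tr(\tilde A\tilde B)$. For $y=-1$ this should be the smallest absolute trace of a non-peripheral simple closed curve, which is a mapping-class-group invariant; this is plausible since $\lambda_b<3$ and the other simple traces $|F_2|,|F_3|$ exceed $16$. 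Because the $\lambda_b$ are pairwise distinct, the orbits would then be distinct. Establishing this minimality is the one genuinely non-computational point of the proof.
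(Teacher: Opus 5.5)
Your proposal is correct and follows the paper's proof essentially step for step: the same check that $(x_b,-1)\in\mathcal D$ (the conic condition reducing to $\lambda_b\le 6$), the same substitution $y=-1$ into \eqref{eq:Flambda}, the identities $\epsilon^{-b}=-F_1-2$ and $16\epsilon^b=-F_3-2$ for the trace ring, and the same appeal to $\mathcal D$ being Maskit's fundamental domain to get distinct orbits. Your observation that the points lie in the \emph{interior} of $\mathcal D$ sharpens the paper's ``distinct points of $\mathcal D$, hence distinct orbits'', which tacitly needs it, so the backup minimal-trace argument is unnecessary; as written it would in any case require controlling all non-peripheral simple closed curves, not just $F_2$ and $F_3$.
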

\begin{proof}
First of all we claim that the points $(x_b,-1)$ lie in $\mathcal{D}$. Indeed here $|y|=1$, so the condition $1/\sqrt{x}\le|y|\le\sqrt x$ reads $x_b\ge1$, which holds; and the value of the conic at $y=-1$ is $-x_b^2+6x_b-1$, which is $\ge0$ iff $x_b\in[3-2\sqrt2,3+2\sqrt2]$, i.e. iff $\lambda_b\le6$ --- true since $\lambda_b<3$.  As $\lambda_b\in(2,3)$ (since $\epsilon >1$ and $b > 0$) we have $(x_b,-1)\in\mathcal{D}$, and $\epsilon^{\pm b}\in\mathcal{O}_{K,S}$ because $\epsilon$ is a unit. Distinct $b$ give distinct points of $\mathcal{D}$, hence distinct orbits. The computation of the trace ring (and field) follows from the following:
\[
\epsilon^{-b}=-F_1-2,\qquad 16\epsilon^b=-F_3-2,\qquad F_2=F_1+F_3-2.
\]

\end{proof}

\begin{proof}[Proof of \Cref{conj:dichotomy}]

If $(K,S)=(\Q, \emptyset)$, then \Cref{4integersol} gives exactly four orbits. Conversely suppose $\rk\mathcal{O}_{K,S}^\times\ge1$. We first choose an $S$-unit generating the field. Indeed, $K$ has only finitely many proper subfields. For every proper subfield $F\subsetneq K$, let $S_F$ be the set of finite places of $F$ below the places in $S$. Then
\[
\mathcal{O}_{K,S}^\times\cap F^\times\subset \mathcal{O}_{F,S_F}^\times .
\]
Since $K$ has a real embedding, every proper subfield has strictly fewer archimedean places than $K$, and $|S_F|\le |S|$. Dirichlet's theorem therefore gives\footnote{Here we used that, for a non-trivial extension of number fields $F\subset K$,  $(r_1+r_2)(F)\leq (r_1+r_2)(K)$  with equality if and only if it is a CM extension. Indeed, if the degree of $K/F$ is $n$, then each pair of complex embeddings of $F$ contributes one to $r_2(K)$, while each real embedding $v$ of $F$ contributes $n-b_v$ where $b_v$ is the number of complex embeddings of $K$ extending $v$. Therefore $r_1+r_2$ is increasing with equality if and only if $n=2, b_v=1$, i.e. $K/F$ is CM.}
\[
\rk \mathcal{O}_{F,S_F}^\times
=r_1(F)+r_2(F)+|S_F|-1
<
r_1(K)+r_2(K)+|S|-1
=\rk\mathcal{O}_{K,S}^\times .
\]

A finite union of lower-rank subgroups cannot cover the positive-rank abelian group $\mathcal{O}_{K,S}^\times$, so there is an $S$-unit $\epsilon$ with $\Q(\epsilon)=K$. Replacing $\epsilon$ by $-\epsilon$ or $\epsilon^{-1}$ if needed, we may also assume $\epsilon>1$ in the chosen embedding $K\subset\R$.

Apply \Cref{prop:unitfamily} to this $\epsilon$. The trace field of the $b$th point is $\Q(\epsilon^b)$. For infinitely many $b$ we have $\Q(\epsilon^b)=K$: indeed, in a normal closure, if an automorphism $\sigma$ fixes $\epsilon^b$, then $\sigma(\epsilon)/\epsilon$ is a $b$-th root of unity; choosing $b$ coprime to the finite group of roots of unity forces the stabilizer of $\epsilon^b$ to equal the stabilizer of $\epsilon$. Restricting to these infinitely many $b$ gives infinitely many distinct mapping-class-group orbits with trace field exactly $K$.

\end{proof}

\begin{question}\label{conj:exact-dichotomy}
Is the statement of \Cref{conj:dichotomy} true with `trace field $K$' replaced by `trace ring $\mathcal{O}_{K,S}$'? What happens when we replace the four-punctured $\mb{P}^1$ by an $n$-punctured genus $g$ Riemann surface, or $\SL_2$ by other reductive groups $G$?
\end{question}
\begin{rmk}\label{rmkZpoints}
    In the case of $\mb{Z}$-points and the $\SL_2$-relative character variety, it is easy to see that one always has finitely many mapping class group orbits on the Teichm\"uller component. Indeed, a $\mb{Z}$-point on the  Teichm\"uller component of the character variety of a signature $(g; n; e_i)$ surface\footnote{By signature $(g; n; e_1, \cdots, e_k)$ we mean a genus $g$ surface with $n$ cusps and elliptic points of order $e_1, \cdots, e_k$.} corresponds to a signature $(g; n)$ Riemann surface $Y$, whose uniformizing local system underlies a $\mb{Z}$VHS. This implies that $Y$ is a finite \'etale cover of the modular curve $Y(1)$. On the other hand, it follows from Riemann--Hurwitz that there are only finitely many such curves of a given signature. 
\end{rmk}
\begin{rmk} \label{coccialittconj}
Recall that in \cite{2025arXiv250700167C}, Coccia--Litt conjectured that $\overline{\mb{Z}}$-points are dense on relative character varieties of a smooth complex variety $Y$ equipped with a smooth normal crossings compactification, and furthermore proved it in the case of $\SL_2$-character varieties. \cref{conj:dichotomy} and \cref{conj:exact-dichotomy} are different in a number of ways: the most crucial difference is that we are asking about finitude or infinitude of \emph{mapping class group orbits} of $\mc{O}_K$-points, whereas \cite{2025arXiv250700167C} only asks about the $\mc{O}_K$-points themselves. Of course, infinitude of mapping class group orbits of $\mc{O}_K$ (or $\overline{\mb{Z}}$) points implies infinitude of the $\mc{O}_K$ points themselves, but it is not clear to us whether the converse should hold--indeed, in \emph{op.cit.} the infinitude is proven by taking the mapping class group orbit of a single point. 

\end{rmk}

\noindent The proof of \Cref{conj:dichotomy} gives the following partial result towards \Cref{conj:exact-dichotomy}:

\begin{prop}\label{prop: inf-many-Z-S-points}
Let $S$ be a finite set of primes. The number of mapping-class-group orbits of the $\Z[S^{-1}]$-solutions whose ring of definition is $\Z[S^{-1}]$ is infinite, as soon as $S\neq \emptyset$.
\end{prop}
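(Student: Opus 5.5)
Our plan is to specialize the construction of \Cref{prop:unitfamily} to $K=\Q$ and then choose the $S$-unit so that the resulting trace ring is exactly $\Z[S^{-1}]$ rather than a proper subring. Write $S=\{p_1,\dots,p_r\}$ with $r\ge1$ and set $N=p_1\cdots p_r$. We take $\epsilon=N>1$, which lies in $\Z[S^{-1}]^\times$ and has infinite order. For each $b\ge1$, \Cref{prop:unitfamily} then produces the point $(x_b,-1)\in\mathcal{D}$ with $\lambda_b=2+N^{-b}$ and
\[
F_1=-2-N^{-b},\qquad F_2=-16N^{b}-6-N^{-b},\qquad F_3=-16N^{b}-2,
\]
all of which lie in $\Z[S^{-1}]$. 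The same proposition also records that distinct $b$ give distinct points of the fundamental domain $\mathcal{D}$, hence distinct mapping-class-group orbits.

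Next we identify the ring of definition. By \Cref{lem:tracering} and the relations in the proof of \Cref{prop:unitfamily}, the trace ring is $\Z[N^{-b},16N^b]$. Since $16N^b\in\Z$, this equals $\Z[N^{-b}]$. For any integer $b\ge1$ the ring $\Z[N^{-b}]$ contains $N^{-1}=N^{b-1}\cdot N^{-b}$, so it equals $\Z[N^{-1}]$, which is $\Z[S^{-1}]$ because inverting a product of primes inverts each factor. Thus every point in the family has trace ring exactly $\Z[S^{-1}]$, and in particular trace field $\Q$. Unlike the general-$K$ argument in the proof of \Cref{conj:dichotomy}, no restriction on $b$ is needed here, since trace field $\Q$ is automatic.

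Finally we need the orbits to be pairwise distinct. The cleanest route is to use that $\mathcal{D}$ is a fundamental domain, so points in its interior lie in distinct orbits. We expect the main subtlety to be at the boundary of $\mathcal{D}$: the line $y=-1$ meets the boundary curves $y=-1/\sqrt{x}$, $y=-\sqrt x$ only at $x=1$, and since $\lambda_b<3<6$ the conic inequality is strict. So each point $(x_b,-1)$ lies in the interior of $\mathcal{D}$, and infinitely many distinct $b$ give infinitely many distinct orbits.

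As an independent check that avoids the fundamental domain, note that $F_1=-\lambda_b$ is the trace of the simple closed curve $AB$, and $\lambda_b\to2^+$. Only finitely many orbits can contain a closed geodesic of length below a given bound on a bounded-length systole set, so an infinite sequence with shrinking systoles must leave every finite union of orbits. This is a mapping-class-group invariant argument: the systole length is an invariant of the orbit and tends to $0$ along the sequence, so infinitely many distinct orbits occur.
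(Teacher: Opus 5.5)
Your proposal is correct and follows the paper's argument: take $\epsilon=\prod_{p\in S}p$ in \Cref{prop:unitfamily} and observe that $\Z[\epsilon^{-b},16\epsilon^b]=\Z[\epsilon^{-b}]=\Z[S^{-1}]$ for every $b\ge1$, with trace field automatically $\Q$. Your check that $(x_b,-1)$ lies in the interior of $\mathcal{D}$ and your systole argument only make explicit the orbit-distinctness that the paper takes from \Cref{prop:unitfamily}; note, however, that your sentence asserting that only finitely many orbits contain a short closed geodesic is false as written, and the correct version is your closing one (the systole is a $\Mod(\Sigma)$-invariant, positive on each orbit, and tends to $0$ along the family, so infinitely many orbits occur).
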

\begin{proof}
    
For $K=\Q$ and $S\ne\emptyset$, put $\epsilon=\prod_{p\in S}p$. Then for every $b\ge1$,
\[
\Z[\epsilon^{-b},16\epsilon^b]=\Z[\epsilon^{-b}]=\Z[p^{-1}:p\in S]=\Z[S^{-1}],
\]
and the trace field is automatically $\Q$. This gives infinitely many exact trace-ring orbits. The case $S=\emptyset$ is the finite exact trace-ring case already classified in \Cref{4integersol}.
\end{proof}

\begin{proof}[Proof of \Cref{thm1.6}]
    The case $S=\emptyset$ is \Cref{4integersol}. The case $S\neq \emptyset$ follows immediately from \Cref{prop: inf-many-Z-S-points}--indeed, each mapping class group orbit in the proposition gives rise to a point on the signature $(0;4)$ Teichm\"uller space, corresponding to a signature $(0;4)$ Riemann surface  whose uniformizing local system has property $(P_S)$, by the last part of \Cref{lem: trace-field-Q-implies-QVHS}.

\end{proof}

\section{Kabaya's parametrization and proof of \cref{thm: main}}\label{sec:kabaya}

\subsection*{Notations and preliminaries}
For a matrix $A\in \SL_2(\mb{C})$, an \emph{eigenvalue parameter} is a choice of one of the eigenvalues of $A$. Given an eigenvalue parameter $e$ of $A$, a \emph{fixed point parameter} is the point $x := [v] \in \mb{P}^1(\mb{C})$, where $v$ is an eigenvector for the eigenvalue $e$. We will usually refer to the pair $(e,x)$ of an eigenvalue and fixed point parameter.

\subsection{Pants decompositions}
We call a three-holed sphere a \emph{pair of pants}. Given a genus $g$ surface $\Sigma$, a \emph{pants decomposition} is a disjoint union
\[
C=c_1\cup c_2\cup \cdots \cup c_{3g-3}
\]
of simple closed curves on $\Sigma$ such that each connected component of $\Sigma\setminus C$ is a pair of pants. We refer to a component of $C$ as an \emph{interior pants curve}. Boundary curves will not appear in the proof of \Cref{thm: main}, since there we deal only with smooth projective curves. First of all, we recall a result of Kabaya:

\begin{prop}[{\cite[Prop.~3.1]{kabaya}}]\label{prop: pop-reconstruct}
Let $P$ be a pair of pants with boundary curves $c_1,c_2,c_3$. Let $\star\in P$ be a basepoint, and for $i=1,2,3$, let $\gamma_i$ be a based loop at $\star$ encircling $c_i$ counterclockwise, so that $\gamma_1\gamma_2\gamma_3=1$. Suppose $\rho:\pi_1(P,\star)\to\SL_2(\mb{C})$ is an irreducible representation, and that each $\rho(\gamma_i)$ has two fixed points; for each $i$, fix a choice of eigenvalue and fixed point parameter $(e_i,x_i)$ for $\rho(\gamma_i)$.

Then $\rho$ may be reconstructed rationally from $e_1,e_2,e_3,x_1,x_2,x_3$: that is, $\rho$ is conjugate to a representation $\rho':\pi_1(P,\star)\to\SL_2(\mb{C})$ such that each matrix $\rho'(\gamma_i)$ has entries in $\mb{Q}(e_1,e_2,e_3,x_1,x_2,x_3)$.
\end{prop}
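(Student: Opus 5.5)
We will conjugate $\rho$ into a normal form in which the fixed points of the generators sit at standard positions, and then read off every matrix entry rationally from the eigenvalue and fixed-point data. First we apply a M\"obius transformation $h$ sending $x_1\mapsto 0$, $x_2\mapsto \infty$ and $x_3\mapsto 1$. This requires $x_1,x_2,x_3$ to be pairwise distinct, and irreducibility guarantees it. If, say, $x_1=x_2$, then $\rho(\gamma_1)$ and $\rho(\gamma_2)$ share an eigenvector, hence so does $\rho(\gamma_3)=(\rho(\gamma_1)\rho(\gamma_2))^{-1}$, and $\rho$ would be reducible. A similar argument, using the other fixed points, handles coincidences involving $x_3$. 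The matrix of $h$ is built from cross-ratio expressions, so it has entries in $\Q(x_1,x_2,x_3)$, up to a scalar. The scalar is harmless, because conjugation is insensitive to it; we may therefore conjugate by an element of $\GL_2(\Q(x_i))$ rather than $\SL_2$.

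After this conjugation, $\rho(\gamma_1)$ has $v_1=(0,1)^T$ as its eigenvector for $e_1$, and $\rho(\gamma_2)$ has $v_2=(1,0)^T$ as its eigenvector for $e_2$. Concretely, $\rho(\gamma_1)=\begin{pmatrix} e_1^{-1} & 0\\ c & e_1\end{pmatrix}$ and $\rho(\gamma_2)=\begin{pmatrix} e_2 & b\\ 0 & e_2^{-1}\end{pmatrix}$ for some unknowns $b,c$. Conjugating further by a diagonal matrix preserves both of these shapes and rescales $b$ and $c$ inversely, so only the product $bc$ is determined by $\rho$.

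Next we use the condition on $\gamma_3$. Since $\rho(\gamma_3)=(\rho(\gamma_1)\rho(\gamma_2))^{-1}$, the requirement that it have eigenvalue $e_3$ is a trace condition: $\tr \rho(\gamma_1)\rho(\gamma_2)=e_3+e_3^{-1}$. Expanding, $\tr\rho(\gamma_1)\rho(\gamma_2)=e_1^{-1}e_2+e_1e_2^{-1}+bc$, so $bc$ is a rational function of $e_1,e_2,e_3$. The requirement that $\rho(\gamma_3)$ have $(1,1)^T$ as its eigenvector for $e_3$ is a single linear equation in $b$ and $c$, with coefficients in $\Q(e_i)$. Together with the known value of $bc$, this linear equation pins down the ratio $b/c$.

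This is the step I expect to be the main obstacle: a linear relation plus a product relation generally yields a quadratic, not rational values. The point to verify is that the eigenvector equation is genuinely linear and not homogeneous in $(b,c)$. It has a constant term, coming from the diagonal entries, and so $b$ and $c$ are determined individually as elements of $\Q(e_1,e_2,e_3)$, with no square root needed. We can also use the diagonal rescaling freedom to normalize, for example to set $c=1$, and then solve linearly for $b$; in that case the computation must confirm that $(1,1)^T$ being an eigenvector is consistent with this normalization. Once $b$ and $c$ are shown to be rational, each $\rho'(\gamma_i)$ has entries in $\Q(e_1,e_2,e_3,x_1,x_2,x_3)$, which is the claim.
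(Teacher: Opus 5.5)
Your set-up is sound: distinctness of the $x_i$ from irreducibility, the normalization $x_1,x_2,x_3\mapsto 0,\infty,1$ by a matrix $h$ with entries in $\Q(x_1,x_2,x_3)$, the triangular shapes of $\rho'(\gamma_1),\rho'(\gamma_2)$, and $bc=\delta:=e_3+e_3^{-1}-e_1e_2^{-1}-e_1^{-1}e_2$ from the trace. \textbf{The gap is exactly at the step you flagged, and the resolution you propose does not work.} A linear relation with a nonzero constant term, together with a known product $bc$, still gives a quadratic with two roots in general. Non-homogeneity does not remove that ambiguity.

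Concretely, suppose you only impose that $(1,1)^T$ is \emph{an} eigenvector of $\rho'(\gamma_3)$ and use $bc=\delta$. Eliminating $c$ gives $e_1^{-1}b^2-(e_3+e_3^{-1}-2e_1^{-1}e_2)\,b-e_2\delta=0$. Its two roots are $b=e_1e_3^{-1}-e_2$ and $b=e_1e_3-e_2$. They correspond to $1$ being the fixed point of $\rho'(\gamma_3)$ for the eigenvalue $e_3$, resp.\ $e_3^{-1}$. Here both roots happen to lie in $\Q(e_1,e_2,e_3)$, but your argument neither shows this nor selects the right one.

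Relatedly, your remark that only $bc$ is determined by $\rho$ is false once $x_3\mapsto 1$ has been imposed. The matrix $\mathrm{diag}(t,t^{-1})$ moves $1$ to $t^2$, so no diagonal freedom remains. In particular, normalizing $c=1$ is incompatible with $x_3=1$ unless $e_1^{-1}-e_2^{-1}e_3=1$.

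The missing observation is that the pair $(e_3,x_3)$ gives the \emph{vector} equation $\rho'(\gamma_3)(1,1)^T=e_3(1,1)^T$, i.e.\ two scalar equations. Since
\[
\rho'(\gamma_3)=\bigl(\rho'(\gamma_1)\rho'(\gamma_2)\bigr)^{-1}=\begin{pmatrix} e_1e_2^{-1}+bc & -e_1^{-1}b\\ -e_2c & e_1^{-1}e_2\end{pmatrix},
\]
the second coordinate gives $c=e_1^{-1}-e_2^{-1}e_3$ linearly. The first coordinate then gives $b=e_1e_3^{-1}-e_2$. The trace identity holds automatically and need not be imposed. Hence $\rho'$ is defined over $\Q(e_1,e_2,e_3)$, and $\rho=h^{-1}\rho' h$ itself has entries in $\Q(e_1,e_2,e_3,x_1,x_2,x_3)$.

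This stronger, frame-dependent form is what the paper actually needs. The paper gives no proof and cites Kabaya, but it uses the statement to glue pants with prescribed fixed point parameters in the $K$-rationality part of Kabaya's parametrization theorem. By contrast, the literal ``up to conjugacy'' statement already follows from your first steps by normalizing $c=1$ \emph{instead of} $x_3=1$. This is possible since $c\neq 0$ by irreducibility, and it gives $b=\delta$. But that normalization forgets the position of $x_3$ and would not support the gluing argument.
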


\subsection{Kabaya's FN coordinates}
Let $\Sigma$ be a genus $g$ surface and let $C=(c_1,c_2,\ldots,c_{3g-3})$ be a pair of pants decomposition of $\Sigma$. Define $E(\Sigma,C)\subset\mb{C}^{3g-3}$ by
\[
E(\Sigma,C):=
\left\{(e_1,\dots,e_{3g-3})\in\mb{C}^{3g-3}\ \Bigg|\ 
\begin{aligned}
& e_i\notin \{0,\pm 1\}, \quad\text{and}\\
& e_i^{\pm}e_j^{\pm}e_k^{\pm}\neq 1 \quad\text{for }\{i,j,k\}\in\mathcal{P}.
\end{aligned}
\right\}.
\]
Here $\mathcal{P}$ denotes the set of triples $\{i,j,k\}$ such that $c_i,c_j,c_k$ form the boundary of a pair of pants in $\Sigma\setminus C$.

\begin{theor}\label{thm: k-param}
There is a surjective map
\begin{equation}\label{eqn: k-param}
\kab:E(\Sigma,C)\times(\mb{C}^{\times})^{3g-3}\longrightarrow X'_{\PSL_2}(\Sigma,C).
\end{equation}
Here $X'_{\PSL_2}(\Sigma,C)$ denotes the subset of $X_{\PSL_2}(\Sigma)$ consisting of representations $\rho$ such that
\begin{enumerate}
\item for $\gamma_i\in\pi_1(\Sigma)$ in the free homotopy class of $c_i$, the action of $\rho(\gamma_i)$ on $\mb{P}^1(\mb{C})$ has two distinct fixed points,
\item the restriction of $\rho$ to each pair of pants is irreducible, and
\item $\rho$ may be lifted to a representation valued in $\SL_2(\mb{C})$.
\end{enumerate}
Moreover, let $\rho\in X'_{\PSL_2}(\Sigma,C)$ be any point, and suppose
\[
(e_1,\ldots,e_{3g-3},t_1,\ldots,t_{3g-3})\in\kab^{-1}(\rho).
\]
For $i=1,\ldots,3g-3$, let $\gamma_i$ be the based loop chosen by Kabaya in the free homotopy class of $c_i$. Then for any lift $\tilde{\rho}:\pi_1(\Sigma)\to\SL_2(\mb C)$ of $\rho$, the matrix $\tilde{\rho}(\gamma_i)$ has either $ e_i$ or $-e_i$ as an eigenvalue.

Finally, for any subfield $K\subset\mb{C}$, the image under $\kab$ of a $K$-point is again a $K$-point, i.e. corresponds to a representation into $\PSL_2(K)$.
\end{theor}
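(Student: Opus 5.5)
The statement is Kabaya's Fenchel--Nielsen-type parametrization. I would prove it by gluing pairs of pants, using \Cref{prop: pop-reconstruct} as the basic building block. It is really a repackaging of Kabaya's construction \cite{kabaya}.

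\textbf{Step 1: construct the map.} Cutting $\Sigma$ along $C$ gives $2g-2$ pairs of pants. For each pair of pants $P$ with boundary curves $c_i,c_j,c_k$, with $\{i,j,k\}\in\mathcal P$, I would write down a representation $\rho_P$. Its entries are rational in $(e_i,e_j,e_k)$ and in fixed point parameters normalized to $0,\infty,1$ after conjugation. The conditions $e_i\notin\{0,\pm1\}$ and $e_i^{\pm}e_j^{\pm}e_k^{\pm}\neq 1$ are exactly what guarantees:
\begin{itemize}
\item each boundary element has two distinct fixed points;
\item $\rho_P$ is irreducible, since reducibility happens precisely when a product of eigenvalues equals $1$.
\end{itemize}
Along each curve $c_i$, the two adjacent pants give matrices for $\gamma_i$ and $\gamma_i^{-1}$ with the same eigenvalue data. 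I would then glue by the diagonalizable element commuting with them, namely conjugation by $\mathrm{diag}(t_i,t_i^{-1})$ in the eigenbasis, with $t_i\in\C^\times$. Using the standard presentation of $\pi_1(\Sigma)$ as an iterated amalgam and HNN extension over the $\gamma_i$, this defines a representation into $\PSL_2$. Lifting to $\SL_2$ only requires a sign consistency on each pants, which is why condition (3) appears and why only $\pm e_i$ is controlled. All gluing matrices are rational in $(e,t)$. So a $K$-point maps to a representation into $\PSL_2(K)$, which is the rationality clause.

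\textbf{Step 2: image and eigenvalue clause.} By construction, the image lies in $X'_{\PSL_2}(\Sigma,C)$. The eigenvalue of $\tilde\rho(\gamma_i)$ is $e_i$ up to the sign ambiguity of the lift, which gives the eigenvalue clause.

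\textbf{Step 3: surjectivity.} Start from $\rho\in X'$ with an $\SL_2$-lift. For each $i$, choose an eigenvalue $e_i$ of $\tilde\rho(\gamma_i)$. The fixed points are distinct by (1), so $e_i\neq\pm1$. Irreducibility on each pants (2) forces $e_i^{\pm}e_j^{\pm}e_k^{\pm}\neq1$, so $e\in E(\Sigma,C)$. By \Cref{prop: pop-reconstruct}, each restriction $\rho|_P$ is conjugate to the normalized pants representation attached to these eigenvalues. The conjugating matrices on the two sides of each $c_i$ differ by an element of the centralizer of a regular semisimple element, i.e. a torus $\cong\C^\times$. This yields the twist parameter $t_i$, and then $\kab(e,t)=\rho$.

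\textbf{Main obstacle.} The delicate part is the bookkeeping of base points and paths between pants. Choosing the based loops $\gamma_i$ compatibly across the gluing, especially for HNN-type (non-separating) curves, must make the glued representation well defined and independent of choices up to global conjugation. Sign choices for the $\SL_2$-lift also have to be handled consistently there. For this I would follow Kabaya's explicit choice of loops and cite \cite{kabaya} for the verification.
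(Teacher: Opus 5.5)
Your proposal is correct in outline, but it takes a different route from the paper.

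\textbf{Your route.} You build the parametrization yourself:
\begin{itemize}
\item normalize each pants representation so that its three chosen fixed points sit at $0,\infty,1$;
\item across each $c_i$, identify the two eigenbases and compose with a torus element $\mathrm{diag}(t_i,t_i^{-1})$;
\item assemble everything through the graph-of-groups presentation (a tree of amalgams plus $g$ HNN extensions).
\end{itemize}
Surjectivity then follows because an irreducible pants representation is determined up to conjugacy by its eigenvalues, and the remaining ambiguity across each $c_i$ is exactly the centralizer torus. $K$-rationality is immediate: the pants matrices lie in $\SL_2(K)$, and every gluing or stable-letter matrix lies in $\GL_2(K)$.

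\textbf{The paper's route.} The paper does not reprove surjectivity or the eigenvalue clause; both are quoted from \cite[Thm.~7.1]{kabaya}. It only verifies rationality, by tracing Kabaya's specific algorithm:
\begin{itemize}
\item choose the fixed point parameters at a leaf of the dual tree in $K$;
\item propagate them through the tree using the explicit rational formulas of \cite[Thm.~6.1]{kabaya};
\item obtain each $\tilde\rho(\beta_i)$ as the M\"obius map between two triples of $K$-rational fixed points.
\end{itemize}

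\textbf{What each buys.} Your argument is self-contained, and it shows that rationality does not depend on how the twists are normalized. The paper's argument certifies the rationality for Kabaya's actual map $\kab$, whose twist normalization is what \Cref{lem:kabaya-teich} relies on (the region $e_i<-1$, $t_i>0$ landing in the Teichm\"uller component). Your $t_i$ depends on an arbitrary choice of eigenbasis identification, so it need not coincide with Kabaya's. To use your map downstream you would have to compare the two normalizations.

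\textbf{Two small corrections.}
\begin{itemize}
\item Your glued representation lies in $\SL_2(\mb C)$ from the start: pants representations with $\gamma_1\gamma_2\gamma_3=1$ exist in $\SL_2$ for any prescribed eigenvalues. So condition (3) only records that the non-liftable component is never reached; it does not come from a sign consistency on each pants. The $\pm e_i$ ambiguity arises because two $\SL_2$-lifts differ by a character $\pi_1(\Sigma)\to\{\pm1\}$.
\item The stable letters only give elements of $\mathrm{PGL}_2(K)$: lifting them to $\SL_2$ requires a square root of the determinant. So ``$\PSL_2(K)$'' must be read as the $K$-points of the adjoint group, exactly as in the paper.
\end{itemize}
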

We refer to \eqref{eqn: k-param} as \emph{Kabaya's parametrization}.

\begin{proof}
The surjectivity and the eigenvalue assertion are \cite[Thm.~7.1]{kabaya}, together with the discussion preceding it. We now recall how the final $K$-rationality statement follows from Kabaya's construction.

Given a pants decomposition $C$ of $\Sigma$, Kabaya defines generators
\[
\alpha_1,\ldots,\alpha_{2g},\ \beta_1,\ldots,\beta_g
\]
of $\pi_1(\Sigma)$ in \cite[\S4.2]{kabaya}. Kabaya then writes down explicit matrices $$\tilde{\rho}(\alpha_1),\cdots, \tilde{\rho}(\alpha_{2g}), \tilde{\rho}(\beta_1), \cdots, \tilde{\rho}(\beta_g)$$ satisfying the surface-group relations, and therefore obtains a representation $\tilde{\rho}:\pi_1(\Sigma)\to\SL_2(\mb{C})$.

For the $\alpha_i$'s, the matrices are constructed in the first paragraph of \cite[p.26]{kabaya}. This is done by cutting the genus $g$ surface along simple curves to obtain a genus  zero surface, which  then has a pants decomposition for which the dual graph is a tree $\mc T$; one then constructs the $\tilde{\rho}(\alpha_i)$'s inductively as follows. First consider a leaf $v$ of $\mc T$; a representation of the $\pi_1$ of the corresponding pair of pants is determined by eigenvalue parameters and fixed point parameters (denoted by $x_i$ in \cite[p.~26]{kabaya}), and indeed the representation has entries which are rational functions of these parameters by \Cref{prop: pop-reconstruct}. Among these parameters, the former are already given to us as some of the $e_i$'s, and one may choose the latter arbitrarily. Then, for each  vertex $v'$ adjacent to  $v$, the fixed point parameters are rational functions (with $\mb{Q}$-coefficients) of the $x_i$'s for $v$, and the $e_i, t_i$'s--see \cite[Thm.~6.1]{kabaya} for the explicit formulae; with these data one  constructs a representation of $\pi_1$ of the pair of pants corresponding to $v'$, which may be glued to the existing one on $v$, and so on. 

Therefore, if one chooses the $x_i$'s for $v$ to be also in $K$, all the matrices constructed in this algorithm have entries in $K$.

For the $\beta_i$'s, Kabaya describes $\tilde{\rho}(\beta_i)$ as a lift of the unique element of $\PSL_2(\mb{C})$ sending one ordered triple of fixed point parameters $(x'_{i,1},x'_{i,2},x'_{i,3})$ to another ordered triple $(x_{i,1},x_{i,2},x_{i,3})$; we refer the reader to the second paragraph of \cite[p.26]{kabaya} for the precise choice of these triples. Since these fixed point parameters lie in  $K$, the corresponding element of $\PSL_2(\mb{C})$ is also defined over $K$. Hence the resulting point of the $\PSL_2$-character variety is a $K$-point.
\end{proof}

\subsection{Proof of \cref{thm: main}}\label{sec:proofmain}
Recall that, as explained in \Cref{teichsec}, the $\PSL_2(\mb{C})$-character variety of $\Sigma$ is, roughly, the set of all representations of its fundamental group into $\PSL_2(\mb{C})$ up to conjugacy. The space of marked hyperbolic structures on $\Sigma$ is called the Teichm\"uller space of $\Sigma$. Since a marked hyperbolic structure induces a discrete faithful representation of $\pi_1(\Sigma)$ into $\PSL_2(\mb{R})\subset\PSL_2(\mb{C})$, Teichm\"uller space can be regarded as a subspace of the $\PSL_2(\mb{C})$-character variety. 
For the proof of \cref{thm: main}, we focus on the Teichm\"uller component, since each point on it represents a VHS on a (varying) Riemann surface. Kabaya's parametrization gives explicit coordinates on this component.

\begin{lem}[Kabaya {\cite[\S12.2]{kabaya}}]\label{lem:kabaya-teich}
Under Kabaya's parametrization $\kab$, the image of the subset
\[
E_{\mathrm{Teich}}:=\{(e_1,\ldots,e_{3g-3},t_1,\ldots,t_{3g-3})\in\mb{R}^{6g-6}\mid e_i<-1,\ t_i>0\}
\]
lies in the Teichm\"uller component of $X'_{\PSL_2}(\Sigma)$.
\end{lem}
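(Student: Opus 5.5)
The plan is a connectedness argument on the real Kabaya chart, anchored by one explicit Fuchsian point. The set $E_{\mathrm{Teich}}$ is the connected open set $(-\infty,-1)^{3g-3}\times(0,\infty)^{3g-3}\subset\R^{6g-6}$. It lies inside $E(\Sigma,C)\times(\C^\times)^{3g-3}$: for $e_i<-1$ we have $e_i\notin\{0,\pm1\}$, and any product $e_i^{\pm}e_j^{\pm}e_k^{\pm}$ of three reals of absolute value $\neq1$ is negative, hence $\neq 1$. So $\kab$ restricts to a continuous map from $E_{\mathrm{Teich}}$ into the real points of $X'_{\PSL_2}(\Sigma,C)$. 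The final $K$-rationality clause of \Cref{thm: k-param}, applied with $K=\R$, shows the image consists of classes of representations into $\PSL_2(\R)$.

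I would then use the classical fact that the Teichm\"uller space is a connected component of the $\PSL_2(\R)$-character variety. Concretely, on $\operatorname{Hom}(\pi_1\Sigma,\PSL_2(\R))/\PSL_2(\R)$ the discrete faithful representations form a union of components cut out by the Euler class $|e(\rho)|=2g-2$ (Goldman). Since the Euler class is locally constant, the image of a connected set lies in a single component. It therefore suffices to do two things: check that $\kab(E_{\mathrm{Teich}})$ avoids the locus where the real structure degenerates, and exhibit one point of $E_{\mathrm{Teich}}$ mapping to a Fuchsian representation.

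For the first of these, the real representations in the image send each pants curve to an element with two real fixed points and real eigenvalue $\pm e_i$, $|e_i|>1$, i.e. to a hyperbolic element. By the construction in \Cref{prop: pop-reconstruct}, each pair of pants is irreducible with real fixed point parameters, so the representation is a genuine $\PSL_2(\R)$-representation and not merely $\PSL_2(\R)$ up to complex conjugacy; $\SU_2$-type real forms are excluded by hyperbolicity. For the anchor point, I would compare Kabaya's coordinates with the classical Fenchel--Nielsen coordinates. On a hyperbolic structure, the holonomy of $c_i$ has eigenvalue $\pm e^{\ell_i/2}$, and Kabaya's sign conventions on his lifts should give $e_i=-e^{\ell_i/2}<-1$. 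The twist parameter is a cross-ratio of fixed points, which for a Fuchsian representation is positive because the relevant fixed points are in the cyclic order dictated by the orientation on $\partial\mathbb H$. So the Fuchsian point with all $\ell_i=1$ and zero twist should lie in $E_{\mathrm{Teich}}$.

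The main obstacle is this sign and positivity bookkeeping in Kabaya's conventions, namely showing $e_i<-1$ and $t_i>0$ for a genuine Fuchsian point. I would check it first for a single pair of pants with hyperbolic boundary, where the cyclic order of fixed points on $\R\cup\{\infty\}$ determines the sign of the eigenvalue of the lift with $\gamma_1\gamma_2\gamma_3=1$ (the standard fact that the product relation forces negative traces). Then I would verify that gluing across a curve gives a positive cross-ratio $t_i$ exactly when the two adjacent pants lie on opposite sides of the common axis. Once this is established, the Fenchel--Nielsen theorem shows $\kab|_{E_{\mathrm{Teich}}}$ is in fact onto Teichm\"uller space, with $(\log(-e_i), \log t_i)$ playing the role of length and twist.
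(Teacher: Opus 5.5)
Your reduction is correct, and it is a genuinely different route from the paper, which gives no argument and simply defers to Kabaya's \S12.2. The ingredients you use are sound. First, $E_{\mathrm{Teich}}$ is a connected orthant inside $E(\Sigma,C)\times(\C^\times)^{3g-3}$. Second, Kabaya's matrices are rational in the parameters with no poles there, so they give a continuous lift to representations into $\mathrm{PGL}_2(\R)$. (A M\"obius map between two real triples is a priori only in $\mathrm{PGL}_2(\R)$, but the orientation character of each generator is also locally constant, so this is settled at the same anchor.) Third, by Goldman the locus $|e|=2g-2$ is open and closed and equals Teichm\"uller space. But this proves only a dichotomy: $\kab(E_{\mathrm{Teich}})$ is either contained in the Teichm\"uller component or disjoint from it. Everything specific to Kabaya's coordinates, which is the whole content of the lemma, sits in the anchor, and there you only say what the parameters \emph{should} be.

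The eigenvalue half of the anchor is fine. For a hyperbolic pair of pants with $\tilde\gamma_1\tilde\gamma_2\tilde\gamma_3=1$ the product of the three traces is negative, so a lift with all traces $<-2$ exists and $e_i=-e^{\ell_i/2}$ is available. The twist half is the gap. Whether a Fuchsian point has $t_i>0$ depends on exactly which four fixed-point parameters enter Kabaya's twist (through the gluing formulas of his Thm.~6.1) and in which order. Permuting the arguments of a cross-ratio can turn a negative value into a positive one and vice versa, so your remark about cyclic order on $\partial\mathbb H$ does not by itself fix the sign. Nor is the sign cosmetic. By your own description, it records whether the two pants adjacent to $c_i$ lie on opposite sides of the common axis, i.e.\ whether their relative Euler classes $\pm1$ agree, and that is exactly what separates Euler class $\pm(2g-2)$ from smaller values. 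To close the argument you must take Kabaya's actual definition of $t_i$ and compute its sign for two hyperbolic pants glued along a common axis (equivalently, at one explicit Fuchsian point for each $(g,C)$). Finally, your closing surjectivity claim does not follow from the Fenchel--Nielsen theorem without first identifying $\log t_i$ with the Fenchel--Nielsen twist. It is not needed for the lemma, so either prove it or drop it.
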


\begin{proof}[Proof of \Cref{thm: main}]
 Let $S\subset\mathrm{Primes}$ be the finite set of primes in the statement of the theorem. Recall that $S\neq \emptyset$, since otherwise there are only finitely many $\Z$-points in the Teichm\"uller component, cf. \Cref{rmkZpoints}. We must find infinitely many non-isomorphic genus $g$ Riemann surfaces, each supporting a rank two local system, which is a complex direct factor of a $\mb{Q}$VHS satisfying the required integrality and non-integrality conditions.

We do this by finding many points in genus $g$ Teichm\"uller space and taking their uniformizing local systems; by construction, these local systems underlie VHS. Define the subset $E_{\mathrm{Teich}}^{S}$ of the set $ E_{\mathrm{Teich}}$ (introduced in \Cref{lem:kabaya-teich}) by the conditions
\begin{displaymath}
    e_1=-\frac{1}{\prod_{p\in S}p}-1, \qquad e_i,t_i\in\mb{Q} \ \ \ \forall i.
\end{displaymath}

Let $X$ be the Riemann surface corresponding to $\kab(P)$ for some $P\in E_{\mathrm{Teich}}^{S}$, and let $\rho$ be the uniformizing $\PSL_2$-local system on $X$. Note that the trace field of $\rho$ is $\mb{Q}$ by construction, using the last part of \Cref{thm: k-param}. 

Let $\gamma$ be the based loop freely homotopic to $c_1$. Again by \Cref{thm: k-param}, there is a lift of $\rho$, say $\tilde{\rho}: \pi_1(\Sigma)\to \SL_2(\C)$, which moreover has either $e_1$ or $-e_1$ as an eigenvalue;  the trace ring of $\tilde{\rho}$ therefore contains $\mb{Z}[S^{-1}]$. By its construction and  \Cref{cor: adjoint-tr-field-Q-implies-VHS}, $\tilde{\rho}$ satisfies property $(\widetilde{P}_S)$, as required.
\end{proof}

\begin{proof}[Proof of \Cref{thm: main-direct-factor-formulation}]
This follows directly from the above and \Cref{propnoproof}.
    
\end{proof}

\begin{rmk}
Related parametrizations can also be found in \cite{zbMATH01887454} and \cite{zbMATH00108661}. Kabaya's parametrization is the most efficient one for our purposes because it gives rational control of the eigenvalue and twist parameters.
\end{rmk}
\begin{rmk}\label{rmkquadri}
    It is also possible to construct examples coming from \emph{quadrilateral groups}. For the relevant background, see \cite{zbMATH01463511, zbMATH01805844, zbMATH03536341}. Indeed, using  {\cite[Lem. 2 and Cor. 2]{zbMATH01463511}}, it is possible to find infinitely many quadrilateral groups of signature $[2,2,2, t]$, for $t$ an integer, corresponding to $\mb{Q}$VHS which are not integral.
\end{rmk}

\subsection{Proof of \Cref{another thm}}

 \Cref{another thm} follows from the next more general result:

\begin{theor}
Let $K\ne\Q$ be a totally real number field.  Then there exist integral
$K$-variations of Hodge structure on smooth projective curves which are not
$K$-factors of any $\Q$-variation of Hodge structure.  After allowing a finite
set of primes $S$, there also exist non-integral examples with trace ring
contained in $\mathcal O_{K,S}$ and not contained in $\mathcal O_K$.
\end{theor}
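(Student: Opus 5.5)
We plan to build the examples on the Teichm\"uller component, using Kabaya's parametrization (\Cref{thm: k-param}, \Cref{lem:kabaya-teich}) with parameters taken in $K$ instead of $\Q$. Fix a genus $g\ge 2$ and a pants decomposition $C$. Choose $(e_1,\dots,e_{3g-3},t_1,\dots,t_{3g-3})\in E_{\mathrm{Teich}}\cap K^{6g-6}$, so $e_i<-1$ and $t_i>0$ in the identity embedding $K\subset\R$. By the last part of \Cref{thm: k-param}, $\rho=\kab(P)$ is a $\PSL_2(K)$-representation. By \Cref{lem:kabaya-teich} it is the uniformizing representation of a Riemann surface $X$, so it is discrete, faithful and underlies a VHS. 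After passing to the finite-index subgroup $\Gamma^{(2)}$ as in \Cref{cor: adjoint-tr-field-Q-implies-VHS}, together with the remark after \Cref{lem: trace-field-Q-implies-QVHS}, we obtain a $K$VHS (or a complex direct factor of one) on a finite étale cover. That cover is again a smooth projective curve. We will also arrange that the trace field is exactly $K$, for instance by making $e_1$ generate $K$.

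\textbf{Integrality.} For the integral case we take every $e_i\in\mathcal O_K^\times$ and arrange the traces to be integral. Requiring $e_i$ to be units with $e_i<-1$ and then also integral traces is awkward, since the traces involve the $t_i$ rationally. A cleaner route is to take the $e_i$ to be units and to restrict the $t_i$ to units as well, then check via Kabaya's explicit formulae (\cite[Thm.~6.1]{kabaya}) that the resulting matrices have entries in $\mathcal O_K[\text{small denominators}]$. If this fails, an alternative is to take an arithmetic Fuchsian group from a quaternion algebra over $K$ ramified at all but one real place, which gives integral traces and a non-$\Q$ trace field. For the non-integral case we instead take $e_1=-1-\pi^{-1}$ with $\pi\in\mathcal O_{K,S}$ a non-unit $S$-unit, and the remaining parameters in $\mathcal O_{K,S}^\times$. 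The trace ring then lies in $\mathcal O_{K,S}$ and contains $e_1+e_1^{-1}\notin\mathcal O_K$.

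\textbf{Not a $K$-factor of a $\Q$VHS.} Suppose $\rho$ were a $K$-factor of a $\Q$VHS $\mathbb W$. Then for every embedding $\sigma:K\to\R$, the conjugate $\rho^\sigma$ is a factor of $\mathbb W\otimes_{\Q,\sigma}\R$, hence underlies an $\R$VHS. A rank two $\R$VHS with trivial determinant is either unitary or uniformizing, in the sense that its period map is a local isomorphism, or a nonconstant equivariant map, onto $\mathbb H$. By Corlette--Simpson/Hitchin, a non-unitary $\PSL_2(\R)$ VHS on a compact curve has Toledo invariant of maximal absolute value $2g-2$. It is therefore Fuchsian with respect to some complex structure: discrete and faithful, with all non-trivial elements hyperbolic. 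So it suffices to choose the parameters so that, for some $\sigma\neq\mathrm{id}$, $\rho^\sigma$ is non-unitary but not Fuchsian. We plan to do this by making $\sigma(e_1)\in(-1,0)\cup(0,1)$ while keeping $e_1<-1$, which is possible by weak approximation of units, or directly for $\pi^{-1}$ small at $\sigma$. Then $\rho^\sigma(\gamma_1)$ is still hyperbolic, because $\sigma(e_1)$ is real with $|\sigma(e_1)|\ne 1$, so it is non-unitary. On the other hand, Kabaya's parameters for $\rho^\sigma$ fall outside the Teichm\"uller region. Note that $\sigma(e_1)\in(-1,0)$ just means the other eigenvalue is chosen, so the real discriminating condition must come from sign choices, i.e.\ $\sigma(t_i)<0$ or $\sigma(e_i)>1$.

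\textbf{Main obstacle.} Showing that ``outside $E_{\mathrm{Teich}}$'' really implies ``not Fuchsian for any complex structure'' is the hardest step. Teichm\"uller space is a connected component of the real character variety (Goldman), and other components, such as those with lower Euler class or $\sigma(t_i)<0$, could still contain Fuchsian points of the opposite orientation. We would try to use the Euler class: take $\sigma(t_1)<0$ with the other parameters of Teichm\"uller sign, and compute via Kabaya's gluing that the Euler class drops by the contribution of one pants. This gives a Toledo invariant with $|\cdot|<2g-2$, hence $\rho^\sigma$ is not an $\R$VHS. Weak approximation in $K$ (respectively in $\mathcal O_{K,S}^\times$ for the units) gives the simultaneous sign conditions at the identity and at $\sigma$.
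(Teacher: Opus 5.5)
\textbf{The gap.} The step that fails is your claim that a non-unitary $\PSL_2(\R)$-VHS on a compact curve has Toledo invariant of maximal absolute value. Hitchin's description of the $\C^*$-fixed points gives counterexamples on every curve $X$ of genus $g\ge 3$. Take $1\le d\le g-2$, an effective divisor $D$ of degree $2g-2-2d$, a square root $L$ of $\omega_X(-D)$, and $\theta\colon L\to L^{-1}\otimes\omega_X$ the section of $\mathcal O(D)$. The Higgs bundle $(L\oplus L^{-1},\theta)$ is stable, since its only $\theta$-invariant line subbundle is $L^{-1}$, of degree $-d<0$. It is also $\C^*$-fixed. Hence it is a non-unitary $\SL_2(\R)$-VHS with non-maximal Euler class $d$, and such VHS exist in every component of nonzero Euler class.

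So moving $\rho^\sigma$ into a non-maximal component, which is all your $\sigma(t_1)<0$ plan aims at, does not show that $\rho^\sigma$ fails to be a VHS on $X$. You would still have to show that it avoids the VHS on $X$ lying in that component, and the proposal has no mechanism for this. The only topological obstruction available along your lines is $e(\rho^\sigma)=0$: stability forces $\deg L>0$ for a non-unitary VHS, and in genus $2$ parity for $\SL_2$-liftable representations would reduce you to this case. Proving it, however, needs exactly the Euler class computation in Kabaya's coordinates that you leave open.

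\textbf{The missing idea.} The obstruction lives on the \emph{fixed} curve $X$, and the paper gets it by doing the opposite of what you attempt. It places \emph{every} Galois conjugate of the parameters in $E_{\mathrm{Teich}}$: take $e_i,t_i\in\mathcal O_K$ with $e_1$ generating $K$, subtract a large integer from the $e_i$, and add one to the $t_i$. Then $\tau(e_i)<-1$ and $\tau(t_i)>0$ for all $\tau$. Each $\rho^\sigma=\kab(\sigma(P))$ is then Fuchsian, namely the uniformizing representation of another marked curve $X_\sigma$, so it lies in the maximal component. For the fixed complex structure $X$, this component contains exactly one VHS (Hitchin: $L^2\cong\omega_X$ with $\theta$ an isomorphism), which is $\rho$ itself. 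Since the trace field is $K\neq\Q$, some $\rho^\sigma$ has a character different from that of $\rho$. Hence $\rho^\sigma$ is not a VHS on $X$, and $\rho$ cannot be a $K$-factor of a $\Q$VHS. Your worry about $\rho^\sigma$ being Fuchsian for some complex structure is therefore backwards: being Fuchsian for a \emph{different} structure is precisely the obstruction, and no Euler class computation is needed.

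\textbf{Integrality.} Your arithmetic fallback (a quaternion algebra over $K$ ramified at all real places but one) is self-defeating. The non-identity conjugates are then compact, hence unitary VHS, and $\rho$ is a $K$-factor of the restriction-of-scalars $\Q$VHS. Note also that on the paper's route the $e_i$ cannot be units, since $\tau(e_i)<-1$ for all $\tau$ forces $|N_{K/\Q}(e_i)|>1$. Thus $\pm(e_i+e_i^{-1})\notin\mathcal O_K$, and the strictly integral case needs an argument beyond both your sketch and the paper's brief treatment. The $\mathcal O_{K,S}$ case is fine after enlarging $S$.
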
 
In fact, for the examples in our proof, for any non-identity embedding, the resulting real local system does not underlie a VHS, but nevertheless has discrete and faithful monodromy.
\begin{proof}
We use the parametrization of the Teichm\"uller component from \cref{lem:kabaya-teich}. We first claim that we can  find $(e_1, \cdots, e_{3g-3}, t_1, \cdots, t_{3g-3})\in K^{6g-6}$ such that for all embeddings $\tau: K \hookrightarrow \mb{R}$, and all $i$, 
\[
\tau(e_i)<-1, \tau(t_i)>0.
\]

We first show how such a choice gives a local system on a smooth projective curve satisfying the assumptions of the theorem. Indeed, $(e_1, \cdots, e_{3g-3}, t_1, \cdots, t_{3g-3})$ corresponds to a smooth projective genus $g$ curve $Y$, whose uniformizing VHS $\mb{V}$ is $\mc{O}_K$-integral. For all other embeddings $\tau$, $\tau(\mb{V})$ stays in the Teichm\"uller component by the assumption on $(e_i, t_i)$; on the other hand, there is a unique VHS on the Teichm\"uller component of $Y$ (that is, once we fix the complex structure $Y$)--see indeed \cite[Thm 11.2]{zbMATH04032554}, which is written in the equivalent language of Higgs bundles and the $\C^*$-action.

For the $e_i$'s, we  can simply take $e_i\in \mc{O}_K$ generating $K$ and subtract a sufficiently large integer from them; similarly, for the $t_i$'s, we take $t_i\in \mc{O}_K$ and add a sufficiently large integer to them. The non-integral case is obtained analogously. 
\end{proof}

\section{Final considerations around the Hodge locus of non-integral $\Q$VHS}\label{sec3}

We continue the theme of \emph{Murphy's Law} by showing that various phenomena that hold true for $\Z$VHS, like some of the conjectures on the distribution of the Hodge locus appearing in \cite{BKU}, do not generalize to non-integral (pure and polarized) $\Q$VHS. We hope that these considerations highlight the role of integrality in the theory.  Let $X$ be a smooth quasi-projective complex variety.

First of all, recall that there is a notion of  \emph{Mumford--Tate group} of a $\Q$-Hodge structure and of a variation of $\Q$-Hodge structures. If $f: Y\to X$ is a smooth projective morphism one considers the $\Q$VHS $\VV$ naturally enriching the $\Q$-local system $R^*f_*\Q$ and  indeed wishes to compare the Mumford-Tate group at a fiber $x\in X(\C)$ with the one associated to $\VV$. However, it is crucial for the theory that, in this case, the $\Q$-local system \emph{admits an  integral structure} (given by $R^*f_*\Z$).  Nevertheless, given a $\Q$VHS, we have a generic MT group $\MT^0(\VV)$ (given by the Tannakian formalism for $\Q$VHS), which is a reductive $\Q$-group and the MT of each specialization $\VV_x$, $x\in X(\C)$ is naturally a subgroup of the generic one. So, as usual, we set

\begin{equation}
\HL (X,\VV^{\otimes}):=\{x\in X : \MT(\VV_x)\neq \MT(\VV)\}.
\end{equation}

\begin{question}
    What kind of structure does $ \HL (X,\VV^{\otimes}) \subset X(\C)$ have? E.g. is it a countable union of algebraic subvarieties (as in the usual setting)?
\end{question}

Employing the non-integral VHS constructed in \Cref{sec:intro}, in this final section,  we give examples where the natural analog of the Cattani-Deligne-Kaplan theorem \cite{CDK} fails, as well as the Andr\'{e}--Oort conjecture. Namely that there are examples of non-integral $\Q$VHS $(X,\VV)$ that have discrete monodromy, but however:
\begin{itemize}
\item $ \HL (X,\VV^{\otimes})$ is not a union of algebraic subvarieties of $X$ (cf. \Cref{thmnocdk});
\item $(X,\VV)$ contains a dense set of CM points, but $\VV$ is not of Shimura type (cf. \Cref{propnoAO}).
\end{itemize}
Finally, we propose a remedy by introducing the \emph{algebraic Hodge locus}, cf. \Cref{algebhl}, and work out an example.

\subsection{Proofs and further questions}
The first result shows that, in contrast with the famous Cattani-Deligne-Kaplan theorem \cite{CDK}, the Hodge locus is not algebraic:

\begin{theor}\label{thmnocdk}
There are infinitely many curves $X$ with the following property. The Hodge locus of $(X\ \times X, \VV \times \VV)$ cannot be written as a countable union of algebraic (nor analytic) subvarieties of $X\times X$.
\end{theor}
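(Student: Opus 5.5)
The plan is to take $X$ to be one of the signature $(0;4)$ curves (or genus $g$ curves) from \Cref{thm1.6} or \Cref{thm: main}. Its uniformizing local system $\rho$ has trace field $\Q$, trace ring $\Z[S^{-1}]\neq\Z$, and is discrete and faithful. Let $\VV$ be the associated non-integral $\Q$VHS, coming from \Cref{lem: trace-field-Q-implies-QVHS}, or its adjoint $\ad(\rho)$ to make things cleaner. On $X\times X$ consider $\VV\boxtimes\VV$, or $\VV\oplus\VV$ pulled back along the two projections.

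The generic Mumford--Tate group is $\MT(\VV)\times\MT(\VV)$, since the two factors vary independently. The Hodge locus therefore contains every pair $(x,x')$ for which there is a $\Q$-Hodge isomorphism $\VV_x\cong\VV_{x'}$, or more generally a nontrivial Hodge class in $\VV_x^\vee\otimes\VV_{x'}$. The key observation is that for the uniformizing VHS, the period map is the identification of the universal cover with $\mathbb H$. If $\tilde x,\tilde x'\in\mathbb H$ are lifts and $g\in \GL_2(\Q)^+$ (or the units of the quaternion algebra $B$) satisfies $g\tilde x=\tilde x'$, then $g$ gives a $\Q$-isomorphism of Hodge structures $\VV_x\cong\VV_{x'}$. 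Thus the Hodge locus contains the images in $X\times X$ of the graphs of all $g\in\GL_2(\Q)^+$. Its components through a point are locally the graphs $\{(\tilde x, g\tilde x)\}$, which are analytic curves in $\mathbb H\times\mathbb H$ and project to immersed analytic curves in $X\times X$.

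First I would verify that these graphs really lie in the Hodge locus, and that, away from CM points, the Hodge locus is exactly their union; this uses $\MT(\VV_x)=\GL_2$ generically for a weight one rank two structure. Next I would show that the image of a single such graph is not a closed analytic subvariety of $X\times X$. Its closure is the image of $\{(\tilde x,g\tilde x)\}$ under $\Gamma\times\Gamma$, and this is a closed curve only if $g$ lies in the commensurator of $\Gamma=\rho(\pi_1 X)$. Since $\Gamma$ is non-arithmetic (its traces are non-integral), Margulis' commensurator theorem makes $\Comm(\Gamma)$ a finite extension of $\Gamma$. For $g\in\GL_2(\Q)^+$ outside $\Comm(\Gamma)$, the group $\Gamma\cap g^{-1}\Gamma g$ has infinite index. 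Ratner-type equidistribution, or more elementarily density of $\Gamma\cdot g\Gamma$ in $\PSL_2(\R)$ (which holds since $\Gamma$ is Zariski dense and $\Gamma g\Gamma$ is not discrete), then shows that the image of the graph is dense in $X\times X$.

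The conclusion follows by a Baire category argument. Suppose the Hodge locus $\HL$ were a countable union of analytic subvarieties $Z_i$. It is a countable union of immersed curves, so it has empty interior, and hence no $Z_i$ is two-dimensional. Each $Z_i$ is thus a curve or a point. A dense immersed analytic curve $C$ contained in $\HL$ is then covered by countably many $Z_i\cap C$, so by Baire some $Z_i$ contains an open arc of $C$. By analytic continuation and closedness of $Z_i$, that $Z_i$ contains the closure of $C$, which is all of $X\times X$; this is a contradiction. Infinitely many curves $X$ come from the infinitely many orbits in \Cref{prop: inf-many-Z-S-points}.

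The main obstacle is the density step: establishing non-closedness, indeed density, of the images of the graphs, which means showing that the relevant $g$ are not in $\Comm(\Gamma)$ and controlling the closure of $\Gamma g\Gamma$. I would first try Margulis' arithmeticity dichotomy together with the non-integrality of traces, which already rules out arithmeticity.
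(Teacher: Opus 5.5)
Your proposal is correct and follows the paper's route: twisted diagonals $(\id\times g)(\mathbb{H})$ for rational $g$ outside $\Comm(\Gamma)$ give maximal non-closed pieces of the Hodge locus, and such $g$ exist because non-integral traces force $\Gamma$ to be non-arithmetic, so by Margulis' criterion $\Comm(\Gamma)$ is discrete. Your density claim, together with the Baire argument, makes explicit the final step that the paper only asserts, but density needs Ratner's orbit-closure theorem (or the equivalent dichotomy that $\Gamma g\Gamma$ is dense unless $g\in\Comm(\Gamma)$), not the parenthetical ``Zariski dense and not discrete'' remark.
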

\begin{proof}
Let $X$ be a curve over $\mb{C}$ supporting $\VV$ a non-integral $\Q$VHS as constructed  in \Cref{thm: main} or \Cref{thm1.6}, which is uniformized by a map $\mb{H}\to X(\mb{C})$. We denote by $\Gamma=\pi_1(X) \subset \SL_2(\R)^+$ its  fundamental group. We consider the Hodge locus of  $(X\ \times X, \VV \times \VV)$. Let $\G$ be the Mumford-Tate group of $\VV$ (which, by construction, has the property that $\SL_2 \subset \G_\R$).

The non-integrality condition forces $\Gamma$ to be non-arithmetic. The commensurability criterion of Margulis \cite{margulisbook} then guarantees that there exists an element $g\in \G(\Q)_+ $ which does \textbf{not} commensurate $\Gamma$. In particular we can use such $g$ to find a totally geodesic embedding of $\mathbb{H}_g = (\id\times g)(\mb{H}) \subset \mathbb{H}\times \mathbb{H}$ (where $\mathbb{H} \subset \C$ denotes the upper half plane) such that its image in $(X\times X)(\mb C)$ is not closed, but nevertheless it gives a maximal subset of the Hodge locus. Such subset is not algebraic nor analytic, as desired.
\end{proof}

The phenomenon we have witnessed during the proof of \Cref{thmnocdk} is that the Hodge locus is related to Mumford-Tate subdata of $(\G\times \G,D\times S)$, without necessarily having a Zariski dense intersection with $\pi_1(X)$. This is related to the discussion regarding \emph{Real Shimura data} appearing  in \cite[Sec. 3]{BU}. The next proposition shows that the analogue of the André--Oort conjecture for $\Q$VHS that are not integral necessarily fail. 
\begin{prop}\label{propnoAO}
Let $X$ be a curve supporting $\VV$ a non-integral $\Q$VHS as constructed  in \Cref{thm: main} or \Cref{thm1.6}. The set of points $x\in X(\C)$ such that $\VV_x$ has CM (i.e. the Mumford--Tate group of the Hodge structure $\VV_x$ is commutative) is analytically dense in $X(\C)$. Furthermore the Hodge locus of $(X,\VV)$ is just made of CM points.
\end{prop}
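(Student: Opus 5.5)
Throughout, I will work with the setup of \Cref{thmnocdk}. Let $X=\Gamma\backslash\mb H$ be a curve carrying a non-integral $\Q$VHS $\VV$ built from the uniformizing local system, as in \Cref{thm: main} or \Cref{thm1.6}. Let $\G=\MT^0(\VV)$ be its generic Mumford--Tate group; since the adjoint trace field is $\Q$, we have $\G^{\mathrm{der}}_\R\supset\SL_2$ (or a form of it). By \Cref{lem: trace-field-Q-implies-QVHS}, $\VV$, or its rank four enlargement $\rho_F$, is a $\Q$-form of the standard representation of a $\Q$-form $\mathbf H$ of $\SL_2$ given by a quaternion algebra $B$. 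It is either $\SL_2$ itself or $B^1$, possibly with an extra torus coming from the imaginary quadratic field $F$. The period map of $\VV$ is then the uniformization $\mb H\to\Gamma\backslash\mb H$ followed by the identification of $\mb H$ with the $\G(\R)$-conjugacy class $D$ of Hodge cocharacters $h:\mathbb S\to\G_\R$. Concretely, $z\in\mb H$ corresponds to $h_z$, the homomorphism whose image is the compact torus fixing $z$, and $\VV_x$ for $x=[z]$ has Mumford--Tate group the smallest $\Q$-subgroup $\MT(h_z)$ through which $h_z$ factors.

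Step one: for $z\in\mb H$, the Hodge structure $\VV_{[z]}$ is CM if and only if the stabilizer $T_z=\operatorname{Stab}_{\mathbf H(\R)}(z)$ is the real points of a torus defined over $\Q$. This holds if and only if $z$ is a ``CM point'' of the $\Q$-structure, that is, $z$ is the fixed point of some non-central $\gamma\in\mathbf H(\Q)$ that is elliptic in $\mathbf H(\R)$. Equivalently, $T_z=\mathbf T(\R)$ for the maximal $\Q$-torus $\mathbf T=Z_{\mathbf H}(\gamma)$, which is anisotropic over $\R$. The argument is the standard one for Shimura curves, and it only uses the $\Q$-structure on $\G$, not $\Gamma$. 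I would check it by noting that $\MT(h_z)\subset\G$ is a $\Q$-subgroup containing the image of $h_z$. Its intersection with $\mathbf H$ is then either all of $\mathbf H$ or a $\Q$-torus containing $h_z(\mathbb S)\cap\mathbf H_\R=T_z$, and the latter is possible exactly when $T_z$ is defined over $\Q$.

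Step two, density: the set of such $z$ is dense in $\mb H$. Fix an $\R$-anisotropic maximal $\Q$-torus $\mathbf T_0\subset\mathbf H$; it exists by weak approximation, using that $\mathbf H(\R)$ has elliptic elements. Its fixed point is $z_0$. For $g\in\mathbf H(\Q)$, the conjugate $g\mathbf T_0g^{-1}$ is again a $\Q$-torus, with fixed point $g\cdot z_0$. Since $\mathbf H(\Q)$ is dense in $\mathbf H(\R)$ by weak approximation (or simply because $\mathbf H$ is a connected $\Q$-group that is unirational), and $\mathbf H(\R)$ acts transitively on $\mb H$, the orbit $\mathbf H(\Q)\cdot z_0$ is dense in $\mb H$. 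Its image in $X(\C)$ is therefore analytically dense. The crucial point, and the contrast with the integral case, is that $\Gamma$ does not commensurate $\mathbf H(\Q)$ because it is non-arithmetic. This does not matter here: density is a statement in $\mb H$ and survives projection.

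Step three: the Hodge locus consists only of CM points. If $x=[z]$ is in $\HL$, then $\MT(\VV_x)\subsetneq\G$. Its image in $\G^{\mathrm{ad}}=\mathbf H^{\mathrm{ad}}$ is a proper $\Q$-subgroup containing the non-central compact torus $T_z$. The proper subgroups of a $\Q$-form of $\PSL_2$ containing a compact torus are contained in the normalizer of a maximal torus. A rank one group has no other reductive intermediate subgroups, and Borel subgroups are excluded because the Mumford--Tate group is reductive, as the relevant Hodge structure is polarized. So $\MT(\VV_x)^\circ$ is a torus and $\VV_x$ is CM.

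The main obstacle I anticipate is making step one rigorous when $\G$ is slightly larger than $\mathbf H$, namely in the rank four case over $F$ where $\G$ contains a torus factor. There I would reduce to $\G^{\mathrm{der}}=\mathbf H$ and note that commutativity of $\MT$ is detected on the derived part.
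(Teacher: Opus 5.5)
Your proposal is correct and follows the paper's own route. The paper likewise produces one CM point from an $\R$-anisotropic maximal $\Q$-torus, then translates it by the $\Q$-points of the generic Mumford--Tate group, which are dense in the real points. Your Steps one and three make explicit what the paper leaves implicit: the torus criterion for CM points, and the rank-one subgroup argument showing that every point of the Hodge locus is CM.
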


\begin{proof}
The proof is almost identical to \cite[Prop. 7.1.2.]{BU}. In \emph{op. cit.} the authors find an example of a variety which is Shimura with a dense set of $K$-CM points for some totally real number field $K$ necessarily different from $\Q$.

Notice that it is enough to find one CM point and then act on it via the $\Q$-points of the generic Mumford-Tate group of $\VV$. One must exist because we can find a suitable maximal torus (essentially as in the case of Shimura varieties), cf. also the proof in \emph{op. cit.}. 
\end{proof}

In the example discussed above, when $(X,\VV)$ has a dense set of CM points, the conclusion is not that $X$ is a Shimura variety but the weaker one that $X$ is locally symmetric (or that its universal covering is a Hermitian symmetric space). It might be that a similar conclusion is always true:

\begin{question}
 A non-isotrivial $\Q$VHS on a smooth quasi-projective variety $(X,\VV)$ has a dense set of CM points if and only if $\VV$ is uniformizing (in particular the associated period domain is Hermitian).
\end{question}

\begin{question}
Are there examples of $\Q$VHS with discrete monodromy that are not uniformizing?
\end{question}

The above discussion suggests introducing a variant of the Hodge locus:
\begin{defi}\label{algebhl}
Let $(X,\VV)$ be a $\Q$VHS. The  \emph{algebraic Hodge locus} is the following subset of $X(\C)$:
\begin{equation}
\HL (X,\VV^{\otimes})_{\operatorname{alg}}= \bigcup _{Y \subset X  \ : \ \MT(\VV_{| Y^{nor}} ) \neq \MT(\VV)}  Y
\end{equation}
where the union is ranging over the algebraic subvarieties whose generic Mumford-Tate group ``drops'' (possibly after taking the normalization of the subvariety).
\end{defi}
The above variant  changes the picture only for positive dimensional subvarieties, and indeed points remain elusive (if we weren't to impose the non-isotriviality condition the above union would include every point $x\in X(\C)$). The above locus is very much related to the so called \emph{monodromy locus} (of positive period dimension), i.e. the locus of subvarieties  where the algebraic monodromy drops (non-trivially). This is studied in details in \cite{BKU, 2024arXiv240616628B} and, even more recently, Klingler has announced general results about the `rigid' part of the monodromy locus of arbitrary local systems; in contrast our investigation is focused on the case of $\Q$VHS.

We conclude with the description of the algebraic Hodge locus in the concrete family of examples studied  in \Cref{propnoAO}.
\begin{prop}
Let $X$ be a curve as in \Cref{propnoAO}. The algebraic Hodge locus of $(X\ \times X, \VV \times \VV)$ consists of:
\begin{itemize}
\item CM points,
\item vertical and horizontal lines at CM points,
\item finitely many modular correspondences parametrized by the commensurator of $\Gamma = \pi_1(X)\subset \mathbf{G}(\Q)$.
\end{itemize}
\end{prop}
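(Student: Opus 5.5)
The plan is to classify the positive-dimensional algebraic subvarieties $Y\subset X\times X$ along which the generic Mumford--Tate group of $\VV\times\VV$ drops. Points are handled separately. The generic Mumford--Tate group of $\VV\times\VV$ is $\G\times\G$: the two factors have independent monodromy (the monodromy of $\VV\times\VV$ on $X\times X$ is $\Gamma\times\Gamma$, which is Zariski dense in the product). So a point $(x_1,x_2)$ lies in the locus exactly when $\MT(\VV_{x_1}\oplus\VV_{x_2})\neq \G\times\G$. Since $\MT$ of a point is a subgroup of $\MT(\VV_{x_1})\times\MT(\VV_{x_2})$, this happens when either factor drops (so $x_i$ is CM, by \Cref{propnoAO}) or when the pair is ``Hodge-correlated''. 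Because points are permitted in the definition only through isotrivial components, I will read the first item as the CM points (and pairs of CM points) already produced by \Cref{propnoAO}.

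Next I handle curves. An irreducible algebraic curve $Y\subset X\times X$ has two projections $p_1,p_2$. If $p_2(Y)$ is a point $x_2$, then the Mumford--Tate group of $\VV\times\VV|_{Y^{nor}}$ is $\G\times\MT(\VV_{x_2})$, which is a proper subgroup iff $x_2$ is CM. This gives the vertical and horizontal lines at CM points. Otherwise both projections are finite, and the monodromy of $\VV\times\VV|_{Y^{nor}}$ is a finite-index subgroup $\Delta\subset\Gamma\times\Gamma$ whose projections are of finite index in $\Gamma$. By Goursat's lemma, its Zariski closure in $\SL_2\times\SL_2$ is either the full product or the graph of an isomorphism $\SL_2\to\SL_2$, up to finite index. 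If the monodromy is the full product, the Mumford--Tate group contains the derived group and does not drop. The remaining case is the graph of some $\phi$. Then $\phi$ is conjugation by an element $g$, and it carries a finite-index subgroup of $\Gamma$ onto a finite-index subgroup of $\Gamma$, so $g\in\Comm(\Gamma)$. Conversely, each $g\in\Comm(\Gamma)$ gives the Hecke correspondence $(\Gamma\cap g^{-1}\Gamma g)\backslash\mb H\to X\times X$, which is algebraic. On it the Mumford--Tate group is contained in the graph of $\Ad g$ composed with the centre, so it drops. Note that $g$ lies in $\G(\Q)$ up to the centre, because the $\Q$-structure is determined by the traces.

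I then dispose of surfaces and the finiteness claim. If $\dim Y=2$ then $Y=X\times X$ and nothing drops. For finiteness, $\Gamma$ is non-arithmetic by non-integrality (as in the proof of \Cref{thmnocdk}), so by Margulis's commensurability criterion $\Comm(\Gamma)$ is a lattice containing $\Gamma$ with finite index. Hence $\Comm(\Gamma)/\Gamma$ is finite, and there are only finitely many such correspondences modulo $\Gamma$ on each side, i.e. finitely many curves in $X\times X$. I must still check that these correspondence curves are genuinely algebraic, which follows because each is the image of a finite cover of the compact or finite-volume curve $X$.

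The main obstacle is the Goursat step. Specifically, I need to pass from ``the Mumford--Tate group drops on $Y$'' to ``the monodromy closure is a graph''. The difficulty is that Mumford--Tate groups could in principle drop without the monodromy dropping, for instance through a CM-type abelian part. I would handle this by Deligne--André: the derived monodromy is normal in the derived Mumford--Tate group. Applying this to $\VV\times\VV|_{Y^{nor}}$, full monodromy $\SL_2\times\SL_2$ forces the derived Mumford--Tate group to be all of $\G^{der}\times\G^{der}$, and the centre is controlled by that of $\G$. The André normality argument uses only the polarized $\Q$VHS structure and not integrality, so it should transfer; I expect the real care to go here.
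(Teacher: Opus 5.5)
Your argument is correct in outline but takes a different route from the paper. The paper works on the Hodge-theoretic side. It lists the sub-data of $(\G\times\G,X_G\times X_G)$, whose non-product members are the twisted diagonals $\{(z,gz)\}\subset\mathbb{H}\times\mathbb{H}$ with $g\in\G(\Q)_+$. It then keeps the algebraic ones via the criterion that the image of such a twisted diagonal in $X\times X$ is closed if and only if $g\in\Comm(\Gamma)$, and Margulis gives finiteness. You start instead from an algebraic curve $Y$, use Deligne--Andr\'e normality to turn a drop of the Mumford--Tate group into a drop of monodromy, and apply Goursat to a monodromy group whose projections have finite index in $\Gamma$. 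In your route the commensurator condition comes for free from algebraicity, so you never need the closedness criterion, which the paper asserts without proof. The paper's route, on the other hand, also describes the non-closed components of the Hodge locus that are exploited in \Cref{thmnocdk}.

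Two things need fixing. First, the monodromy $\Delta$ of $Y^{nor}$ is not a finite-index subgroup of $\Gamma\times\Gamma$; if it were, the graph case could never occur. You mean a subgroup both of whose projections have finite index in $\Gamma$.

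Second, and more substantively, Goursat only shows that the monodromy of $Y$ lies in the graph of $\Ad g$ for some $g\in\Comm(\Gamma)$. It does not yet show that $Y$ \emph{is} the Hecke correspondence $T_g$, which is what the third item asserts. You need a rigidity step, which goes as follows.
\begin{enumerate}
\item After a finite cover of $Y^{nor}$ killing the sign character, $g$ gives a flat isomorphism between the pullbacks of the uniformizing local system under $p_1$ and $p_2$.
\item Since the restrictions are Zariski dense, Schur's lemma shows this isomorphism spans the space of flat sections of the weight-zero variation $\mathcal{H}om(p_1^*\VV,p_2^*\VV)$.
\item By the theorem of the fixed part (equivalently, uniqueness of the variation of Hodge structure on an irreducible local system), it is a Hodge class. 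Hence the lifted period maps satisfy $f_2=g\circ f_1$.
\item So the lift of $Y$ lies in $\{(z,gz)\}$, giving $Y\subseteq T_g$, and hence $Y=T_g$.
\end{enumerate}
Since $f_1$ and $f_2$ both land in $\mathbb{H}$, this also rules out orientation-reversing $g$.

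Finally, restricting to positive-dimensional $Y$ is the right reading, but it is forced, not optional. Taken literally, zero-dimensional $Y$ would contribute every point $(x,gx)$ with $g\in\G(\Q)_+$, and these are not CM points. The paper's remark after \Cref{algebhl} indicates that only positive-dimensional $Y$ are intended.
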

\begin{proof}
Associated to $(X,\VV)$ we have a Shimura datum $(\mathbf{G},X_G)$. and we can consider sub-Shimura data of $(\mathbf{G}\times \mathbf{G} ,X_G\times X_G)$. Exactly as one does in the theory of Shimura varieties, we can also act on them via elements $g\in \mathbf{G}(\Q)_+$ (the so called \emph{Hecke action}). 

Consider: 
\begin{displaymath}
\HL(g\mathbf{G}g^{-1}):=\{(x,y)\in X\times X : \MT(\VV_x\times \VV_y)=g\mathbf{G}g^{-1} \}.
\end{displaymath}
It is the projection to $X\times X$ of a totally geodesic copy of $\mathbb{H}$ in $\mathbb{H}$ embedded via a twisted diagonal by $g$ (in particular compatible with the rational structures involved).
Observe that it is closed in $X\times X$ if and only if $g\in \Comm (\Gamma)$, the commensurator of $\Gamma$. Since $\Gamma$ is non-arithmetic, $\Comm (\Gamma) / \Gamma$ is finite, as desired.
\end{proof}

\begin{rmk}
We remark here that the approach to study the \emph{geometric part of the Hodge locus} appearing in \cite{2024arXiv240616628B} is quite general and various results hold true for the monodromy locus of $\Q$VHSs with discrete monodromy (but not the whole Hodge locus, as we have explained above)--more about this will appear in a forthcoming paper.
\end{rmk}

\bibliographystyle{abbrv}

\bibliography{biblio.bib}

\Addresses

\end{document}